\documentclass[12pt]{article}
\usepackage{amsmath,amssymb,amsthm,mathtools, xcolor,kotex}
\usepackage[noadjust]{cite}
\usepackage{tikz}
\usepackage{hyperref}
\usetikzlibrary{arrows.meta,positioning,shapes.geometric,calc} 
\newtheorem{theorem}{Theorem}[section]
\newtheorem{lemma}[theorem]{Lemma}
\newtheorem{proposition}[theorem]{Proposition}
\newtheorem{corollary}[theorem]{Corollary}
\newtheorem{claim}[theorem]{Claim}
\theoremstyle{definition}
\newtheorem{definition}[theorem]{Definition} 
\newtheorem{observation}[theorem]{Observation}
\theoremstyle{remark}
\newtheorem{remark}[theorem]{Remark}
 
\DeclareMathOperator{\ONH}{ONH}
\DeclareMathOperator{\per}{per} 
\newcommand{\HH}{\mathbb{H}}
\newcommand{\CC}{\mathcal{C}}
\newcommand{\HG}{\mathcal{H}}
\usepackage{geometry}
\definecolor{revcol}{RGB}{200,0,150}

\title{Det-extremal cubic graphs and the total domatic number} 
 
\author{Myungho Choi\thanks{Department of Mathematics Education, Seoul National University, Seoul,
Republic of Korea.}
\and
Hyemin Kwon\thanks{Korea Institute for Advanced Study (KIAS), Seoul, Republic of Korea.}
\and
Boram Park\thanks{Department of Mathematics Education, Seoul National University, Seoul,
Republic of Korea. \texttt{borampark@snu.ac.kr}}
}
 
\date{}
 
\begin{document}
\maketitle

\begin{abstract}
A graph $G$ is \emph{det-extremal} if $|\det A|=\per A$ for its adjacency matrix $A$.
Det-extremal cubic bipartite graphs arise in the study of P\'olya's permanent problem, and McCuaig characterized the $3$-connected ones as vertex-sums of copies of the Heawood graph.
The \emph{total domatic number} of a graph is the largest number of pairwise disjoint total dominating sets. 
Characterization of the cubic graphs with  total domatic number $1$ has been a long-standing open problem.

In this paper, we prove that a connected cubic graph is det-extremal if and only if its total domatic number is $1$. We further show that McCuaig's characterization extends to all $3$-connected cubic graphs, and that every connected det-extremal cubic graph has girth $3$, $5$ or $6$.  We also prove that a connected det-extremal cubic non-bipartite graph has at least $28$ vertices, and that this bound is best possible.  
Through this correspondence, these results carry over to cubic graphs with total domatic number $1$.
In addition, in the language of configurations, our results imply that every triangle-free $3$-configuration has a blocking set. 
\end{abstract}

\medskip
\noindent\textbf{Keywords:} cubic graph, det-extremal graph, total domatic number, $3$-configuration

\smallskip

\noindent\textbf{2020 MSC:} 05C50, 05C69, 05B30

\section{Introduction}\label{sec:intro}

In this paper, all graphs are finite and simple, unless stated otherwise.
Hypergraphs are also finite, but multiple hyperedges are allowed.
In a graph or a hypergraph $G$, for a vertex $v$ in $G$, let $\deg_G(v)$ and $N_G(v)$ be the degree of $v$ and the neighborhood of $v$, respectively, in $G$. 
For a graph or a hypergraph $G$ and a subset $X$ of $V(G)$ or $E(G)$, let $G-X$ denote the graph or the hypergraph obtained from $G$ by deleting the members of $X$.
If $X$ is a subset of $V(G)$, then $G-X$ also deletes all edges incident with a vertex in $X$.
If $X=\{x\}$, then we may write $G-x$ instead of $G-X$.
 
\subsection{Det-extremal graphs}\label{sub:intro:det} 
 
For a square $(0,1)$-matrix $M$, it holds that $|\det M|\le\per M$. We say that a graph $G$ is \emph{det-extremal} if $|\det A|=\per A$ for its adjacency matrix $A$.
For bipartite graphs, det-extremality has been studied as a special case of P\'olya's permanent problem \cite{Polya}, which asks for which square $(0,1)$-matrices $M$ some entries can be changed from $1$ to $-1$ so that the determinant of the resulting matrix equals $\per M$. 
Little \cite{Little} characterized such matrices in terms of a family of forbidden subgraphs, and a structural characterization yielding a polynomial algorithm was obtained independently by McCuaig \cite{McCuaigPolya} and by Robertson, Seymour, and Thomas \cite{RST}. 
Thomassen \cite{Thom86} related the problem to sign-nonsingular matrices and to directed cycles of even length.
For cubic bipartite graphs, McCuaig \cite{McCuaig} characterized the $3$-connected det-extremal graphs, and Funk, Jackson, Labbate, and Sheehan \cite{FJLS} studied the det-extremal graphs of connectivity $2$ and determined the possible orders of connected det-extremal cubic bipartite graphs.
 
A bipartite graph $G$ is \emph{Pfaffian} if some matrix $A^{*}$ obtained from an adjacency matrix $A$ of $G$ by changing some entries from $1$ to $-1$ satisfies $|\det A^{*}|=\per A$.
Then P\'olya's permanent problem asks which bipartite graphs are Pfaffian, and the forbidden subgraph and structural descriptions of \cite{Little,McCuaigPolya,RST} mentioned above are characterizations of Pfaffian bipartite graphs.
Every det-extremal bipartite graph with parts of equal size is Pfaffian, while the converse fails, see Remark~\ref{rem:pfaffian}. 

To state the characterization of McCuaig \cite{McCuaig}, we need the notion of a vertex-sum of two cubic bipartite graphs.
Let $G_1,G_2$ be disjoint cubic bipartite graphs, let $x\in V(G_1)$ with $N_{G_1}(x)=\{x_1,x_2,x_3\}$, and let $y\in V(G_2)$ with $N_{G_2}(y)=\{y_1,y_2,y_3\}$.
A \emph{vertex-sum} of $G_1$ and $G_2$ with respect to $x$ and $y$ is a graph $G$ obtained from $(G_1-x)\cup (G_2-y)$ by adding edges $x_1y_1,\ x_2y_2,\ x_3y_3$.
See Figure~\ref{fig:vsum}. 
Note that the resulting graph $G$ is also cubic bipartite and $|V(G)|=|V(G_1)|+|V(G_2)|-2$.
We often omit reference to $x$ and $y$, and simply call the resulting graph $G$ a vertex-sum of $G_1$ and $G_2$ if there is no confusion.
For a positive integer $k$ and a connected graph $G$, a \emph{$k$-vertex-sum of $G$} is a graph obtained from the disjoint union of $k$ copies of $G$ by repeatedly replacing two connected components by one of their vertex-sums, until the graph is connected. 
In particular, $G$ itself is a $1$-vertex-sum of $G$. 
The following theorem of McCuaig \cite{McCuaig} characterizes the $3$-connected det-extremal cubic bipartite graphs.
The Heawood graph, denoted by $\HH$ in this paper,  is given in Figure~\ref{fig:heawood}.
 
\begin{figure}[ht]
\centering
\begin{tikzpicture}[
 v/.style={circle,fill,inner sep=1.5pt},
 blob/.style={draw,rectangle,minimum width=22mm,minimum height=26mm,align=center,font=\footnotesize},
 every label/.style={font=\footnotesize,inner sep=1pt}]
\node[blob] (B1) at (0,0) {\footnotesize{$G_1-x$}};
\node[v,label=above left:$x_1$] (x1) at (0.99,0.85) {};
\node[v,label=below left:$x_2$] (x2) at (0.99,0) {};
\node[v,label=below left:$x_3$] (x3) at (0.99,-0.85) {};
\node[v,label=right:$x$] (y) at (1.45,0) {};
\draw (y)--(x1); \draw (y)--(x2); \draw (y)--(x3);
\node[v,label=left:$y$] (x) at (2.55,0) {};
\node[v,label=above right:$y_1$] (y1) at (3.01,0.85) {};
\node[v,label=below right:$y_2$] (y2) at (3.01,0) {};
\node[v,label=below right:$y_3$] (y3) at (3.01,-0.85) {};
\node[blob] (B2) at (4,0) {\footnotesize{$G_2-y$}};
\draw (x)--(y1); \draw (x)--(y2); \draw (x)--(y3);
\draw[-{Stealth[length=3mm]},thick] (6,0) -- (7,0);
\node[blob] (C1) at (9.0,0) {$G_1-x$};
\node[v,label=above left:$x_1$] (a1) at (9.98,0.85) {};
\node[v,label=below left:$x_2$] (a2) at (9.98,0) {};
\node[v,label=below left:$x_3$] (a3) at (9.98,-0.85) {};
\node[v,label=above right:$y_1$] (b1) at (11.48,0.85) {};
\node[v,label=below right:$y_2$] (b2) at (11.48,0) {};
\node[v,label=below right:$y_3$] (b3) at (11.48,-0.85) {};
\node[blob] (C2) at (12.5,0) {$G_2-y$};
\draw (a1)--(b1); \draw (a2)--(b2); \draw (a3)--(b3);
\end{tikzpicture}
\caption{An illustration of a vertex-sum}
\label{fig:vsum}
\end{figure}

\begin{figure}[b!]
\centering
\begin{tikzpicture}[scale=0.9,
 v/.style={circle,fill,inner sep=1.6pt},
 every label/.style={font=\scriptsize,inner sep=1.5pt}]
\foreach \i in {0,...,13}{
 \node[v] (h\i) at ({90-\i*360/14}:2.4) {};
}
\foreach \i in {0,...,13}{
 \pgfmathtruncatemacro{\j}{mod(\i+1,14)}
 \draw (h\i) -- (h\j);
}
\foreach \i in {0,2,4,6,8,10,12}{
 \pgfmathtruncatemacro{\j}{mod(\i+5,14)}
 \draw (h\i) -- (h\j);
}
\end{tikzpicture}
\caption{The Heawood graph $\HH$}
\label{fig:heawood}
\end{figure}

\begin{theorem}[\cite{McCuaig}, see also {\cite[Theorem~2.2]{FJLS}}]
\label{thm:mccuaig}
A $3$-connected cubic bipartite graph is det-extremal if and only if it is a $k$-vertex-sum of the Heawood graph $\HH$ for some integer $k\ge 1$.
\end{theorem}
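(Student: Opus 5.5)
The plan is to translate det-extremality of a cubic bipartite graph $G$ with bipartition $(U,W)$, $|U|=|W|=n$, into a statement about its $n\times n$ biadjacency matrix $B$. The adjacency matrix is $A=\begin{pmatrix}0&B\\ B^{T}&0\end{pmatrix}$, so $|\det A|=(\det B)^2$ and $\per A=(\per B)^2$. Hence $G$ is det-extremal exactly when $|\det B|=\per B$. This means every perfect matching of $G$ contributes to $\det B$ with the same sign. Equivalently, the trivial orientation (all edges from $U$ to $W$), or the all-ones signing, is Pfaffian. By the standard parity argument, this holds if and only if every \emph{nice} cycle $C$ (one with $G-V(C)$ having a perfect matching) has length $\equiv 2 \pmod 4$.

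For sufficiency, I will check this parity condition on $\HH$ directly. I would argue that every cycle of $\HH$ that can be alternating with respect to a perfect matching has length $6$ or $14$. One can also simply compute $\per B=24=|\det B|$ for the incidence matrix of the Fano plane. Next I show that the vertex-sum preserves det-extremality. Perfect matchings of the vertex-sum $G$ of $G_1,G_2$ at $x,y$ are in bijection with pairs $(M_1,M_2)$ of perfect matchings of $G_1,G_2$ that use the edges $xx_i$ and $yy_i$ for the same $i$. Ordering the vertices so that $x$ and $y$ occupy compatible positions, the sign of the permutation of a matching of $G$ is the product of the signs of $M_1$ and $M_2$, up to a fixed global sign independent of $i$. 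This amounts to a Laplace expansion of $\det B$ along the row of $x$ and the column of $y$, after matching the cofactor signs. It follows that $\det B(G)=\pm\sum_i \det B_1^{(x,x_i)}\det B_2^{(y,y_i)}$, where the terms are minors. Since in $G_j$ all minors along the row of $x$ have a common cofactor sign (that is what $|\det|=\per$ gives termwise), the absolute value equals $\sum_i \per\cdot\per=\per B(G)$. Induction on $k$ then handles every $k$-vertex-sum of $\HH$. I also need that such sums are $3$-connected; this follows because a $3$-edge-cut replaces a vertex.

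Necessity is the substantial direction. Let $G$ be a $3$-connected cubic bipartite det-extremal graph. If $G$ has a nontrivial tight $3$-edge-cut, I contract each side to a vertex to get $G_1,G_2$ with $G$ their vertex-sum. Reversing the computation above shows that $G_1$ and $G_2$ are again det-extremal, cubic, bipartite and $3$-connected, and smaller, so induction applies. The key case is therefore a \emph{cyclically $4$-edge-connected} (brace-like) det-extremal cubic bipartite graph, which must be shown to be $\HH$.

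For that case I would use the Pfaffian-brace theory of McCuaig and Robertson--Seymour--Thomas. A Pfaffian brace is either $K_{3,3}$-free planar-like, the Heawood graph, or a trisum of smaller Pfaffian braces. For a cubic brace, trisums are impossible without creating degree $\ge4$ or a $3$-cut. Planar cubic braces are Pfaffian, but then det-extremality additionally requires the all-plus signing to work, that is, every face length $\equiv 2 \pmod 4$. This forces all faces to be hexagons or of length $\ge 10$, which contradicts Euler's formula for cubic planar graphs (average face length $<6$). This leaves $\HH$.

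I expect this last step, excluding every other cubic brace, to be the main obstacle. It is where the deep structure theorem is invoked, and the face-parity argument must be made rigorous, since the unique Pfaffian orientation up to equivalence has to coincide with the trivial one.
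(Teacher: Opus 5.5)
The paper gives no proof of this statement: it quotes it from McCuaig, so your sketch has to be judged on its own. Your sufficiency half is sound. Let $\alpha_i,\beta_i$ be the cofactor terms of $\det B(G_1)$ along the row of $x$ and of $\det B(G_2)$ along the column of $y$. Let $p_i,q_i$ be the numbers of perfect matchings through $xx_i$ and through $yy_i$. Then $\det B(G)=\pm\sum_i\alpha_i\beta_i$ and $\per B(G)=\sum_ip_iq_i$, and det-extremality of both summands gives $\alpha_i=\epsilon_1p_i$ and $\beta_i=\epsilon_2q_i$. Your aside that central cycles of $\HH$ have length $6$ or $14$ is false. With Fano lines $\{i,i+1,i+3\}$ mod $7$, the pentagon on points $0,\dots,4$ leaves points $5,6$ matched to the lines $\{5,6,1\},\{6,0,2\}$, so it gives a central $10$-cycle. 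This does no harm, because $\per B=|\det B|=24$ already settles $\HH$. The necessity half, however, has two genuine gaps.

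\emph{Reversing the computation does not show that $G_1$ and $G_2$ are det-extremal.} Suppose $|\sum_i\alpha_i\beta_i|=\sum_ip_iq_i$, with $|\alpha_i|\le p_i$, $|\beta_i|\le q_i$ and $p_i,q_i>0$. You only get $|\alpha_i|=p_i$, $|\beta_i|=q_i$, and that $\operatorname{sgn}\alpha_i\operatorname{sgn}\beta_i$ does not depend on $i$. Nothing excludes the sign pattern $(+,+,-)$ for both $(\alpha_i)$ and $(\beta_i)$. In that case $G_1$ is Pfaffian only through the signing that negates the single edge $xx_3$ (a ``twist'' at $x$). The matching twist at $y$ cancels it, so $G$ is det-extremal while $G_1$ is not. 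Summing a graph with a copy of itself at corresponding vertices gives $\det=\pm\sum_i\alpha_i^2$ and $\per=\sum_ip_i^2$. Hence your step amounts to proving that no $3$-connected cubic bipartite graph carries a twist at a single vertex. That is part of the content of the theorem, not a formal consequence of the expansion.

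A possible repair is to choose the cut so that $G_2$ is a brace, and prove a stronger brace statement: a cubic brace that becomes Pfaffian after negating at most one edge at $y$ is $\HH$ with no negation needed.
\begin{itemize}
\item $\HH$ admits no twist. Negating an edge $e$ would force every central cycle through $e$ to have length $\equiv0\pmod4$. But every edge lies on a central cycle, and all central cycles of $\HH$ have length $\equiv2\pmod4$.
\item For planar braces, there are at least six $4$-faces, each a central $4$-cycle. Each of them would have to contain the negated edge, but only two faces do.
\end{itemize}
Then the $\beta_i$ share a sign, hence so do the $\alpha_i$, and induction applies to $G_1$.

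\emph{Cubic trisums exist,} contrary to your claim. Suppose cubic braces $G_1,G_2,G_3$ share a $4$-cycle $C$ and all four edges of $C$ are deleted. Then each vertex of $C$ receives exactly one edge from each summand, so the trisum is cubic. For example, glue three cubes $Q_3$ along a common face and delete that face's edges. The result is a $16$-vertex cubic bipartite graph with no nontrivial $3$-edge cut, hence a brace. It is non-planar, since contracting the three opposite faces gives a $K_{3,4}$ minor, and it is Pfaffian by McCuaig's theorem. So after discarding planar braces and $\HH$, you must still exclude every brace obtained from planar braces by repeated cubic trisums. Because of the first gap, you must do this in the twisted form. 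One way is to exhibit, in each such brace, a $4$-cycle avoiding any prescribed edge; in a cubic bipartite graph such a cycle is automatically central. Without this, ``this leaves $\HH$'' is unjustified.
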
 
 
For general connectivity, Funk, Jackson, Labbate, and Sheehan \cite{FJLS} determined the possible orders as follows. 
 
\begin{theorem}[{\cite[Theorem~5.1]{FJLS}}]\label{thm:fjlsorders}
There exists a connected det-extremal cubic bipartite graph of order $n$ if and only if $n\in\{14, 26, 38, 42, 44, 50\}$ or even $n\ge 54$. 
\end{theorem}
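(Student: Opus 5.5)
The plan has two halves: realize every order in the list by explicit constructions, and exclude every other even order by a structural reduction to the $3$-connected case, where Theorem~\ref{thm:mccuaig} applies. The tool throughout is the matching interpretation. For a bipartite graph with parts of equal size, $\det A=\pm(\det B)^2$ and $\per A=(\per B)^2$, where $B$ is the biadjacency matrix. Hence $G$ is det-extremal if and only if all perfect matchings of $G$ have the same sign, i.e. every nice cycle ($M$-alternating for some perfect matching $M$) has length $\equiv 0 \pmod 4$.

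\emph{Existence.} A vertex-sum replaces $x,y$ by the three edges $x_iy_i$. Perfect matchings of the sum correspond to pairs of perfect matchings of $G_1$ and $G_2$ using the matching edges at $x$ and $y$ with the same index. From this I will check that a vertex-sum of det-extremal graphs is det-extremal. A $k$-vertex-sum of $\HH$ therefore gives orders $12k+2$, namely $14,26,38,50,62,\dots$. For the remaining orders ($42$, $44$, and even $n\ge 54$ not of the form $12k+2$), I would build connectivity-$2$ examples. The construction: take two det-extremal cubic bipartite graphs, delete an edge from each, and reconnect the four degree-$2$ ends by two edges respecting the bipartition, possibly inserting a small gadget such as a path or a ladder segment in between. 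I then verify the sign condition by splitting perfect matchings according to how many of the two new edges they use. Each gadget shifts the order by a fixed even amount. I would then check that the additive combinations of $12k+2$ and these shifts cover $42$, $44$, and every even $n\ge 54$.

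\emph{Non-existence.} Let $G$ be a connected det-extremal cubic bipartite graph of order $n$. If $G$ is $3$-connected, Theorem~\ref{thm:mccuaig} gives $n=12k+2$. Otherwise $G$, being a $2$-connected cubic bipartite graph, has a $2$-edge-cut. Removing the cut and closing each side with an edge between its two degree-$2$ vertices (which lie in opposite colour classes, by parity of the bipartition) gives smaller cubic bipartite graphs $G_1,G_2$ with $n=n_1+n_2$. I would prove that $G$ is det-extremal if and only if both $G_i$ are det-extremal and satisfy an extra sign-compatibility condition on the new edge: the two classes of perfect matchings, containing or avoiding that edge, are coherently signed. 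Inducting, $G$ decomposes into $3$-connected pieces, each a vertex-sum of Heawood graphs (or degenerate small pieces), glued by these $2$-sums. So the achievable orders are sums of allowed orders subject to the compatibility condition.

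\emph{Main obstacle.} The hard step is the compatibility condition. Naive additivity would allow $28=14+14$, which is not in the list, so the gluing must genuinely fail for some pairs, such as two copies of $\HH$. Because $\HH$ is edge-transitive, I would first compute directly that $2$-summing two copies of $\HH$ creates a nice cycle of length $\equiv 2 \pmod 4$. I would then show that compatibility forces at least one side to carry an extra structure costing a definite number of vertices. That yields the excluded orders $28, 30, 32, 34, 36, 40, 46, 48, 52$ and all orders below $14$, which I would finish off by a careful case check on small decompositions.
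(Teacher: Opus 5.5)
The paper does not prove this statement; it quotes it from \cite[Theorem~5.1]{FJLS}, and only fragments of that argument appear here (Definition~\ref{def:decompo}, Theorem~\ref{thm:fjls41}, Claim~\ref{cite:proof:FJLS}). Judged on its own, your proposal has a genuine gap in each half.

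\textbf{Non-existence.} The argument fails at your $2$-sum lemma, and the trouble starts with a reversed parity. A nice cycle of length $2m$ changes the sign of the permutation by $(-1)^{m-1}$. So the criterion is that every central cycle has length $\equiv 2\pmod 4$ (Lemma~\ref{thm:central4}), as for the $6$-cycles of $\HH$.

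With the correct parity, let $\{e,f\}$ be a $2$-edge cut and let $G_1,G_2$ be the two sides closed by markers $g_1,g_2$. A central cycle of $G$ crossing the cut has length $|C_1|+|C_2|$, where $C_i$ is a central cycle of $G_i$ through $g_i$. Every $g_i$ lies on such a cycle: take perfect matchings of $G_i$ containing and avoiding $g_i$, which exist by K\"onig's theorem. Hence if both $G_i$ are det-extremal, $G$ has a central cycle of length $\equiv 0\pmod 4$; for two Heawood graphs it has length $6+6=12$. So the right-hand side of your lemma can never hold, and the lemma would also exclude $42$ and $44$.

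The correct condition is that one side is det-extremal and the other becomes det-extremal only after its marker is given sign $-1$. For instance, in the $42$-vertex ring of three copies of $\HH-e$, cutting off one copy leaves on the other side exactly the non-det-extremal $28$-vertex $2$-sum of two Heawood graphs.

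Consequently, after full decomposition the $3$-connected pieces are det-extremal only up to signs on their marker edges. Theorem~\ref{thm:fjls41} controls only pieces at cactus vertices of degree at most $4$, and pieces with three or more markers need not be Heawood sums. For example, choose three edges of the cube $Q_3$ with exactly one on each face, and replace each by a $2$-edge cut to a copy of $\HH-e$. Each copy acts on central cycles like a negated edge, so by Kasteleyn's face criterion the resulting $50$-vertex graph is det-extremal, yet its central piece is $Q_3$. Thus ``achievable orders are sums of allowed orders'' has no basis. Excluding $28,\dots,52$ needs the actual cactus case analysis of \cite{FJLS}, which your closing ``careful case check'' does not supply.

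\textbf{Existence.} This half is incomplete beyond the orders $12k+2$. Your vertex-sum argument for those orders is fine, since a perfect matching of a vertex-sum uses exactly one of the three new edges. But none of the gluings you propose works:
a direct $2$-sum of two det-extremal graphs always fails, by the computation above;
a path gadget leaves vertices of degree $2$;
a ladder segment contains $4$-cycles, which are central in every cubic bipartite graph (Observation~\ref{lem:noc4}).

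The constructions that do work are different:
odd rings of Heawood-sum pieces, e.g.\ three copies of $\HH-e$ in a ring give $42$, since crossing cycles have length $|C_1|+|C_2|+|C_3|\equiv 2\pmod 4$, and more generally orders $\equiv 6\pmod{12}$;
the two-vertex hub of Claim~\ref{cite:proof:FJLS}, giving orders $\equiv 8\pmod{12}$ from $44$;
further combinations for the residues $10,0,4\pmod{12}$, first needed at $58$, $60$, $64$.
The proposal identifies none of these.
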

 
In this paper, we study det-extremal cubic graphs in general, and one of our main results identifies them with the cubic graphs of total domatic number $1$.

\subsection{The total domatic number}\label{sub:intro:dt} 
 
Let $G$ be a graph with $\delta(G)\ge 1$.
A \emph{total dominating set} of $G$ is a set $S\subseteq V(G)$ with $S\cap N_G(v)\ne\emptyset$ for every $v\in V(G)$, and the \emph{total domatic number} $d_t(G)$ is the largest number of pairwise disjoint total dominating sets.
The parameter was introduced by Cockayne, Dawes, and Hedetniemi \cite{CDH}.
For a positive integer $k$, a \emph{$k$-coloring} of a graph $G$ is an assignment from $V(G)$ to a set of $k$ colors.
Chen, Kim, Tait, and Verstra\"ete \cite{CKTV} observed that $d_t(G)\ge k$ if and only if $G$ admits a \emph{$k$-coupon coloring}, that is, a $k$-coloring of $G$ in which every vertex has all $k$ colors in its neighborhood.
Goddard and Henning \cite{GH} studied the same notion under the name \emph{thoroughly dispersed coloring}. 
 
In particular, $d_t(G)\ge2$ if and only if $G$ has a $2$-coupon coloring.
Heggernes and Telle \cite{HT} proved that deciding whether $d_t(G)\ge2$ is NP-complete and Zelinka \cite{Zelinka} showed that no lower bound on the minimum degree guarantees $d_t(G)\ge2$.
 
For $r$-regular graphs the total domatic number can be large: it is at least $r/(3\log r)$ \cite{ASV}, and in fact at least $(1-o(1))\,r/\log r$ \cite{CKTV}. 
Asking only for $d_t(G)\ge2$ for an $r$-regular graph $G$ is therefore a weak demand, and yet it is not always met.
If $r=1$, it is easy to see that $d_t(G)=1$.
The case $r=2$ follows from the fact that $d_t(C_n)\ge2$ if and only if $4\mid n$.
For $r\ge4$, we always have $d_t(G)\ge2$: the case $r\ge8$ was proved by Alon and Bregman \cite{AB}, and the case $r\ge4$ follows from a theorem of Thomassen \cite{Thom92} on even cycles in regular digraphs, as shown by Henning and Yeo \cite{HY13}.
For $r=3$, the answer can be no.
The Heawood graph $\HH$ has $d_t(\HH)=1$, and infinitely many further cubic examples exist \cite{DHH}.
Thus the problem of deciding whether $d_t(G)\ge 2$ is settled for regular graphs $G$ except for the cubic case.

Indeed, characterizing the cubic graphs with $d_t(G)=1$ was posed as an open problem in \cite{AMMY} and is described as a long-standing open problem in \cite{GoddardHenning}. 
The total domatic number of a cubic graph has therefore attracted attention, and it is also closely related to notions appearing in other contexts, in the language of colorings and of configurations.  
 
Note that every cubic graph $G$ satisfies $d_t(G) \le 3$, and that $d_t(G) = 3$ holds if and only if $G$ admits a $3$-coloring in which every vertex has exactly one neighbor of each color; such graphs are called \emph{neighborhood $3$-balanced}. 
Minyard and Sepanski \cite{MS} gave a complete characterization of the cubic graphs $G$ with $d_t(G) = 3$. 
It thus remains to decide, for a cubic graph $G$, whether $d_t(G) = 2$ or $d_t(G) = 1$.
 
The known sufficient conditions for a cubic graph $G$ to satisfy $d_t(G)\ge2$ all force a short cycle.
Desormeaux, Haynes, and Henning \cite{DHH} proved $d_t(G)\ge2$ when $G$ contains a diamond, when $G$ has no induced cycle of length greater than $5$, and when $G$ is claw-free.
They asked whether a triangle suffices.
Akbari, Motiei, Mozaffari, and Yazdanbod \cite{AMMY} proved it under a condition forcing every vertex to lie on a triangle or a $4$-cycle.
Most recently, Akbari, Azimian, Fazli Khani, Samimi, and Zahiri \cite{AAKSZ} answered the question of Desormeaux, Haynes, and Henning \cite{DHH} negatively, exhibiting a cubic graph $G$ of order $60$ containing a triangle with $d_t(G)=1$. 
They then proved the following theorem, which implies the case of a diamond. 
 
\begin{theorem}[{\cite[Corollary~4]{AAKSZ}}]\label{AAKSZ-thm}
Every connected cubic graph $G$ containing a $4$-cycle has $d_t(G)\ge2$.
\end{theorem}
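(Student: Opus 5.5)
The plan is to work with the \emph{neighborhood hypergraph} of $G$: the $3$-uniform hypergraph $\mathcal N(G)$ on $V(G)$ whose hyperedges are the sets $N_G(v)$. By the observation of Chen, Kim, Tait, and Verstra\"ete, $d_t(G)\ge2$ is equivalent to $G$ having a $2$-coupon coloring. That in turn says exactly that $\mathcal N(G)$ is properly $2$-colorable, meaning no hyperedge is monochromatic. I would argue by a minimal counterexample. Let $G$ be a connected cubic graph with a $4$-cycle $C=v_1v_2v_3v_4$, with $d_t(G)=1$ and $|V(G)|$ minimum. The reason a $4$-cycle helps is that $N(v_1)\cap N(v_3)\supseteq\{v_2,v_4\}$ and $N(v_2)\cap N(v_4)\supseteq\{v_1,v_3\}$. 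Hence if we give $v_1,v_3$ different colors and $v_2,v_4$ different colors, the four hyperedges $N(v_1),\dots,N(v_4)$ are automatically non-monochromatic, whatever colors the remaining vertices get.

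First I would dispose of the small local configurations. If $C$ has a chord, then $G$ contains a diamond, or (when both chords are present) $G=K_4$. These cases are handled directly: for $K_4$ use two colors on two vertices each, and for a diamond extend by hand, colouring greedily outward from the two degree-$3$ vertices of the diamond and then finishing as in the general case. The same applies if two of the outside neighbors coincide, for instance $K_{3,3}$ minus an edge structure. So I may assume $C$ is induced and each $v_i$ has a third neighbor $a_i\notin C$, with the $a_i$ not all distinct only in a few explicit subcases that I would treat separately.

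In the main case I would delete $C$ and repair degrees with a gadget to get a smaller connected cubic graph $G'$. The natural choice is to add new vertices $x,y$ with $x\sim a_1,a_3,y$ and $y\sim a_2,a_4,x$, so that $|V(G')|=|V(G)|-2$. If $G'$ still contains a $4$-cycle and is connected, then minimality yields a $2$-coupon coloring $c'$ of $G'$, and I would lift it to $G$. Keep $c'$ on $V(G)\setminus C$. Put $c(v_1)=c(v_3)=c'(x)$ or split them, and likewise for $v_2,v_4$ relative to $c'(y)$. The splitting choice is made so that each $N_G(a_i)$ is non-monochromatic, using that $N_{G'}(a_i)$ was non-monochromatic with $x$ or $y$ playing the role of $v_i$. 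I would then check the hyperedges $N(v_i)$ separately by a short case analysis on $c'(a_i)$. The key point is that for each pair $\{v_1,v_3\}$ there are two "opposite" assignments available, and one of them is compatible with the constraints coming from $a_1$ and $a_3$.

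The main obstacle is keeping the induction inside the class: $G'$ may fail to be connected or may lose all its $4$-cycles, in which case minimality gives nothing. To overcome this I would strengthen the induction statement to a precoloring-extension form. The stronger statement would be: for a cubic graph with a $4$-cycle, a $2$-coupon coloring exists with prescribed distinct colors on one antipodal pair of that $4$-cycle. I would also choose the gadget so that it itself contains a $4$-cycle. For example, replace $C$ by a smaller $4$-cycle gadget attached to $a_1,\dots,a_4$, using a multigraph step if necessary, and then verify simplicity. Disconnected $G'$ would be handled by noting that each component meets the gadget and therefore contains the new $4$-cycle. Verifying that the lifting case analysis always succeeds, especially when some $a_i$ are adjacent to each other, is where I expect most of the work.
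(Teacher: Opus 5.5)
There is a genuine gap. The lifting step carries the whole argument, and it fails as you describe it. You claim that one of the two opposite (split) assignments of $\{v_1,v_3\}$ is compatible with the constraints at $a_1$ and $a_3$; this is false for your gadget ($x\sim a_1,a_3,y$, $y\sim a_2,a_4,x$).

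Suppose that in $c'$ the two neighbors of $a_1$ other than $x$ get the same color, and likewise for $a_3$. The coupon condition at $a_1$ and $a_3$ in $G'$ forces that color to be $1-c'(x)$. So in $G$ you must take $c(v_1)=c(v_3)=c'(x)$, and no split is possible. Then $N_G(v_2)=\{v_1,v_3,a_2\}$ and $N_G(v_4)=\{v_1,v_3,a_4\}$ need $c'(a_2)\ne c'(x)$ and $c'(a_4)\ne c'(x)$. But $G'$ only guarantees that $\{c'(a_2),c'(a_4),c'(x)\}$ is not monochromatic. Nothing in your argument excludes $c'(a_2)=c'(x)\ne c'(a_4)$, which makes $N_G(v_2)$ monochromatic. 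Minimality hands you an arbitrary $c'$, so you cannot steer around this. Moreover, as you note, $G'$ need not contain a $4$-cycle, so minimality cannot even be invoked.

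The remedy you sketch does not close the gap. A gadget replacing the four vertices of $C$ by fewer vertices cannot contain a $4$-cycle in a simple graph, and a four-vertex gadget does not reduce the order. The precoloring-extension statement is formulated, but you give no reduction that preserves it and no lifting argument. The chord and coincident-neighbor cases are deferred to the unproved general case.

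The missing ingredient is global rather than local. The paper gets the theorem in a few lines from Theorem~\ref{thm:detper}:
the $4$-cycle lifts to a $4$-cycle of the cubic bipartite graph $G\times K_2$, which is central by Hall's theorem;
so $G\times K_2$ is not det-extremal (Lemma~\ref{thm:central4}, Observation~\ref{lem:noc4});
hence neither is $G$ by Lemma~\ref{lem:coverprop}(1), and so $d_t(G)\ne 1$.

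Your strengthened statement is in fact true, and there is a route close to your idea. A $2$-coloring with $c(v_1)\ne c(v_3)$ is exactly a proper $2$-coloring of the hypergraph obtained from the component of $\ONH(G)$ containing $v_1$ by replacing $N_G(v_2)$ and $N_G(v_4)$ with the single $2$-edge $\{v_1,v_3\}$. Use Seymour's theorem that a minimally non-$2$-colorable hypergraph has at least as many hyperedges as vertices. Combine it with an incidence count: every degree is at most $3$, every vertex of $N_G(v_2)\cup N_G(v_4)$ has degree at most $2$, and the component is connected. This shows the modified hypergraph is $2$-colorable. When $G$ is bipartite, repeat the argument with $v_2,v_4$ in place of $v_1,v_3$ for the other component.
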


Planarity is another sufficient condition. Goddard and Henning~\cite{GH} conjectured that every planar triangulation with minimum degree at least $3$ has total domatic number at least $2$.
This was proved by Francis, Illickan, Jose, and Rajendraprasad~\cite{FIJR24}, who conjectured more generally that the same holds for every planar graph with minimum degree at least~3.
Rotenberg, Rutschmann, and Thomassen~\cite{RRT26} recently confirmed this conjecture.
In particular, $d_t(G)\ge 2$ for every planar cubic graph~$G$.

We characterize the connected cubic graphs with total domatic number $1$, and from this result, one can determine whether a given cubic graph $G$ satisfies $d_t(G)\ge 2$.

\subsection{Main results}\label{sub:intro:results}
 
We now state our results.
For cubic bipartite graphs, the correspondence between det-extremality and the total domatic number was already implicit in earlier work \cite{McCuaig,Seymour,Thom86}, in the language of configurations or of directed cycles of even length, see Section~\ref{sec:B:configuration} for a precise comparison.
Our first result extends this correspondence for all connected cubic graphs.

\begin{theorem}\label{thm:detper}
Let $G$ be a connected cubic graph. Then $G$ is det-extremal if and only if $d_t(G)=1$.
\end{theorem}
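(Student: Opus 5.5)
The plan is to go through sign-nonsingularity of the adjacency matrix, viewed as the incidence structure of the \emph{neighborhood hypergraph} $\mathcal N(G)$. This hypergraph has vertex set $V(G)$ and hyperedges $N_G(v)$ for $v\in V(G)$; it is $3$-uniform and $3$-regular. Its incidence matrix is $A$, and its incidence bipartite graph is the canonical double cover $G\times K_2$. A $2$-coupon coloring of $G$ is exactly a proper $2$-coloring of $\mathcal N(G)$, meaning no hyperedge is monochromatic.

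First, $\per A>0$. The double cover is a cubic bipartite graph, so it has a perfect matching, and such matchings correspond to the nonzero terms of $\per A$. Hence $|\det A|=\per A$ holds iff all nonzero terms of $\det A$ have the same sign, i.e. iff the sign pattern of $A$ is sign-nonsingular (SNS). I will use the Klee--Ladner--Manber characterization of $L$-matrices, applied to $A^{T}=A$. It says that $A$ is SNS iff for every nonzero $y\in\{-1,0,1\}^{V(G)}$ there is a vertex $v$ such that $y$ restricted to $N_G(v)$ is not identically $0$ and all its nonzero entries have the same sign.

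The direction ($\Rightarrow$) is now immediate. Suppose $d_t(G)\ge2$, and take a $2$-coupon coloring written as $y\in\{\pm1\}^{V(G)}$. Every $N_G(v)$ sees both signs, so $y$ witnesses that $A$ is not SNS, and therefore $G$ is not det-extremal.

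For ($\Leftarrow$), suppose $G$ is not det-extremal. Then there is a nonzero $y\in\{-1,0,1\}^{V}$ with support $S\neq\emptyset$ such that every $N_G(v)$ either misses $S$ or meets both signs. I will keep $y$ on $S$ and try to extend it to a $\pm1$-coloring of $V\setminus S$. Every hyperedge meeting $S$ is already bichromatic, so the only constraints come from the sub-hypergraph $\mathcal H'$ of hyperedges $N_G(v)$ contained in $V\setminus S$. I must show $\mathcal H'$ is $2$-colorable.

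This extension is the main step, and I will attack it with Seymour's theorem \cite{Seymour}: every minimally non-$2$-colorable hypergraph has at least as many edges as vertices. Suppose $\mathcal H'$ is not $2$-colorable, and take a minimally non-$2$-colorable sub-hypergraph $\mathcal K$ on vertex set $W$. Each vertex lies in at most $3$ hyperedges of $\mathcal K$ and each hyperedge has size $3$, so $|E(\mathcal K)|\le|W|$. Seymour then forces equality, and $\mathcal K$ is $3$-regular on $W$. This means $W$ is closed: for every $w\in W$, all three vertices $v$ with $w\in N_G(v)$ have $N_G(v)\in E(\mathcal K)$. In terms of the double cover, the copies of $W$ and of these $v$'s then form a union of connected components of $G\times K_2$. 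The remaining case analysis is as follows.
\begin{itemize}
\item If $G$ is non-bipartite, $G\times K_2$ is connected, so $W=V(G)$. This contradicts $W\subseteq V\setminus S$ with $S\neq\emptyset$.
\item If $G$ is bipartite with parts $X,Y$, then $A=\begin{pmatrix}0&B\\B^T&0\end{pmatrix}$ and $|\det A|=\det(B)^2$, $\per A=\per(B)^2$. It is cleaner to run the whole argument with the biadjacency matrix $B$ instead of $A$. The hypergraphs $\{N(v):v\in Y\}$ on $X$ and $\{N(v):v\in X\}$ on $Y$ are both connected, and the same closure argument produces a proper coloring of one side's neighborhoods. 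Since coupon conditions for vertices of $X$ involve only colors on $Y$ and vice versa, we can handle the two sides independently. Using that $B$ and $B^T$ are SNS simultaneously, this gives a $2$-coupon coloring.
\end{itemize}

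I expect the delicate points to be exactly this application of Seymour's theorem, namely checking the equality and regularity conclusion and the closedness/connectivity argument, together with the bipartite bookkeeping.
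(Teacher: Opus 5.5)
Your proof is correct, and it takes a genuinely different route from the paper's. The paper never uses sign patterns. It passes to $G\times K_2$ by Lemma~\ref{lem:coverprop}(1) and applies Theorem~\ref{thm:thomassen}, which combines Henning--Yeo's Lemma~\ref{lem:hy}, Seymour's characterization of square minimally non-$2$-colorable hypergraphs via the digraph $D(M)$ (strong, no even directed cycle), and the Vazirani--Yannakakis criterion. It also needs a separate case for repeated hyperedges and, for bipartite $G$, a separate treatment of the two dual components of $\ONH(G)$ (Lemmas~\ref{lem:disconnected} and~\ref{lem:coverdetM}).

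You instead identify det-extremality with sign-nonsingularity (using $\per A>0$) and read the Klee--Ladner--Manber criterion as a statement about partial $2$-coupon colorings. This buys several things:
\begin{itemize}
\item The implication ``det-extremal $\Rightarrow d_t(G)=1$'' becomes elementary, since a $2$-coupon coloring is itself a witness against sign-nonsingularity.
\item The converse needs only Seymour's inequality $|E|\ge|V|$. Your degree count and closure argument does the work of Lemma~\ref{lem:hy}.
\item Repeated hyperedges need no special case, because a minimal obstruction has pairwise distinct edges.
\item Switching to $B$ and $B^{\mathsf T}$ in the bipartite case is exactly right. With $A$ alone, the closure only yields $W\in\{X,Y\}$, which is no contradiction when $S$ lies in the other side.
\end{itemize}
Your argument uses only regularity and connectivity of the incidence graph. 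So the same argument, applied to the incidence matrix of an arbitrary connected $3$-uniform $3$-regular hypergraph, reproves Theorem~\ref{thm:thomassen} itself, and it works verbatim in the $r$-uniform $r$-regular setting. In exchange, the paper's route makes the link to Seymour's digraph characterization and the even-cycle literature explicit.

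Two details to state in a write-up. First, take $W$ to be the union of the edges of $\mathcal K$, so that Seymour's inequality applies with no isolated vertices. Second, say explicitly that degree $3$ in $\mathcal K$ forces the three sets $N_G(v)$, $v\in N_G(w)$, to be distinct edges of $\mathcal K$; this is what makes the closure step valid.
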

 
For the adjacency matrix $A$ of the Heawood graph $\HH$, it can be checked that $|\det A|=\per A=576$ and so $d_t(\HH)=1$ by Theorem~\ref{thm:detper}.
As a by-product we obtain a new proof of Theorem~\ref{AAKSZ-thm} (see Corollary~\ref{cor:4cycle}).

A related approach to Theorem~\ref{thm:detper} was used by Rotenberg, Rutschmann, and Thomassen~\cite{RRT26} in the context of planar graphs, and  Theorem~\ref{thm:detper} makes the resulting correspondence explicit for all connected cubic graphs. See Subsection~\ref{sec:proofmain}.

Under the additional assumption of $3$-connectedness we obtain a complete description of the structure of det-extremal cubic graphs. Namely, the bipartiteness assumption in Theorem~\ref{thm:mccuaig} can be dropped.
 
\begin{theorem}\label{thm:3connected}
Let $G$ be a $3$-connected cubic graph.
Then $G$ is det-extremal if and only if $G$ is a $k$-vertex-sum of the Heawood graph for some integer $k\ge 1$.
\end{theorem}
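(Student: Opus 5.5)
The plan is to reduce Theorem~\ref{thm:3connected} to McCuaig's Theorem~\ref{thm:mccuaig} by showing that every $3$-connected det-extremal cubic graph is bipartite. The ``if'' direction is immediate: a $k$-vertex-sum of $\HH$ is a $3$-connected cubic bipartite graph, since vertex-sums of $3$-connected cubic graphs are again $3$-connected. Hence it is det-extremal by Theorem~\ref{thm:mccuaig}. For the ``only if'' direction, let $G$ be $3$-connected, cubic and det-extremal. By Theorem~\ref{thm:detper}, $d_t(G)=1$. It then suffices to prove that $G$ is bipartite, after which Theorem~\ref{thm:mccuaig} applies directly.

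To show bipartiteness, I will work with the coupon-coloring formulation and study the expansion $\per A=\sum_\sigma \prod a_{i\sigma(i)}$. The permutations contributing to $\per A$ correspond to spanning subgraphs of $G$ whose components are directed cycles and $2$-cycles (single edges traversed both ways). Equality $|\det A|=\per A$ forces every such cycle cover to have the same sign. Now suppose $G$ contains an odd cycle $C$. The idea is to build two cycle covers whose signs differ, which contradicts det-extremality.

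The natural choice is to pair a cover that uses $C$ as a directed cycle with one that replaces $C$ by a set of $2$-cycles, or by the reverse orientation together with an adjustment elsewhere. A single odd cycle cannot be decomposed into $2$-cycles, so the comparison has to go through the vertices outside $C$. I would use $3$-connectivity, via Menger or ear decompositions, to find an even cycle $D$ that meets $C$ appropriately. Then the symmetric-difference structure of $C$ and $D$ should yield two covers of $V(G)$ whose sign ratio is $-1$.

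An alternative route avoids this sign bookkeeping. Here one shows directly that a $3$-connected non-bipartite cubic graph has a $2$-coupon coloring. The procedure is to take a minimal counterexample, apply reductions such as contracting a triangle to a vertex (which preserves $3$-connectivity and the value of $d_t$), and handle girth $\ge 5$ with the Theorem~\ref{AAKSZ-thm} style arguments. The main obstacle will be exactly this step: excluding non-bipartite $3$-connected examples. I would first try to reduce to the bipartite double cover $G\times K_2$. That graph is cubic bipartite, and if $G$ is connected and non-bipartite it is connected. It also has $d_t(G\times K_2)=1$ whenever $d_t(G)=1$, since a coupon coloring of the double cover can be projected or symmetrized. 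Its $3$-connectivity should follow from that of $G$. By Theorem~\ref{thm:mccuaig}, the double cover is then a $k$-vertex-sum of $\HH$. The free involution swapping the two lifts must then be compatible with the $3$-edge-cuts of that vertex-sum structure. Analyzing how a fixed-point-free, color-class-reversing involution can act on a $k$-vertex-sum of $\HH$ should yield the contradiction. The base case is that $\HH$ has no automorphism that swaps its colour classes without fixed points and without mapping any vertex to a neighbour, so it is not a double cover of a simple cubic graph. The general case follows by induction along the tree of $3$-edge-cuts.
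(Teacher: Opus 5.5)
Your ``if'' direction and your overall plan (show that a $3$-connected det-extremal cubic graph is bipartite, then apply Theorem~\ref{thm:mccuaig}) agree with the paper. However, none of your three sketches actually excludes the non-bipartite case, so the ``only if'' direction is not proved. The cycle-cover sign argument only restates the goal: no two covers of opposite sign are exhibited. The minimal-counterexample route rests on a false claim. Expanding one vertex of $\HH$ into a triangle gives a $3$-connected non-bipartite cubic graph, which has $d_t\ge 2$ by the statement itself together with Theorem~\ref{thm:detper}. Contracting that triangle returns the bipartite graph $\HH$ with $d_t(\HH)=1$. So triangle contraction preserves neither $d_t$ nor non-bipartiteness, and that induction does not close.

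Your double-cover route is the paper's route up to the application of McCuaig's theorem, with two points to tighten.
\begin{itemize}
\item Det-extremality of $G\times K_2$ is immediate from $|\det A'|=|\det A|^2$ and $\per A'=(\per A)^2$ for $A'=\left(\begin{smallmatrix}0&A\\A&0\end{smallmatrix}\right)$. This is what McCuaig's theorem needs; going through $d_t$ only sends you back via Theorem~\ref{thm:detper}.
\item The $3$-connectivity of $G\times K_2$ needs an argument. An edge cut $F'$ with $|F'|\le 2$ projects to a set $F$ with $G-F$ connected. If $G-F$ is non-bipartite, its double cover is connected. If it is bipartite, then $|F|=2$ by Lemma~\ref{lem:parity}(1), so $F'$ contains exactly one lift of each edge of $F$. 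Some edge of $F$ has both ends in one part, so both of its lifts join the two copies of $G-F$, and one of them survives.
\end{itemize}

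After that, you turn to an induction on fixed-point-free polarities along the tree of $3$-edge-cuts. It is never carried out, and this is exactly where your proof stops. The missing idea is a one-line count. A $k$-vertex-sum of $\HH$ has $14k-2(k-1)=12k+2$ vertices, so $2|V(G)|=12k+2$ gives $|V(G)|=6k+1$. This is odd, which is impossible for a cubic graph. No involution analysis is needed here; your base case $k=1$ would already mean $|V(G)|=7$. The paper uses a Baer-type polarity argument only where counting fails, namely to exclude order $22$, where the double cover has connectivity $2$.
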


From the properties of a $k$-vertex-sum of the Heawood graph, 
we obtain three conditions, each easy to check, under which a $3$-connected cubic graph $G$ is not det-extremal. 
 
\begin{proposition}\label{thm:three:conditions}
A $3$-connected cubic graph $G$ is not det-extremal if at least one of the following holds.
\textup{(1)} $G$ is non-bipartite.
\textup{(2)} The girth of $G$ is not $6$. 
\textup{(3)} $|V(G)|\not\equiv2\pmod{12}$.
\end{proposition}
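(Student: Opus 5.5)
By Theorem~\ref{thm:3connected}, a $3$-connected cubic graph $G$ is det-extremal exactly when it is a $k$-vertex-sum of $\HH$ for some $k\ge1$. It therefore suffices to show that every $k$-vertex-sum of $\HH$ has three properties: it is bipartite, it has girth exactly $6$, and its order is congruent to $2$ modulo $12$. Each of conditions (1)--(3) then rules out this structure, so $G$ is not det-extremal. I will prove the three properties by induction on $k$, using the recursive definition of a $k$-vertex-sum. The base case $k=1$ is $\HH$ itself, which is bipartite with $14\equiv 2\pmod{12}$ vertices and girth $6$.

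For the order, let a $k$-vertex-sum be formed from the disjoint union of $k$ copies of $\HH$. Each vertex-sum step merges two components and loses two vertices, and exactly $k-1$ steps are performed. The order is therefore $14k-2(k-1)=12k+2\equiv 2\pmod{12}$.

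Bipartiteness is noted after the definition of a vertex-sum: the vertex-sum of two cubic bipartite graphs is again cubic bipartite. Since every intermediate component is itself an iterated vertex-sum of copies of $\HH$, the induction applies to each component.

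The main step is the girth. Let $G$ be a vertex-sum of $G_1$ and $G_2$ with respect to $x$ and $y$, where by induction $G_1$ and $G_2$ are bipartite with girth $6$. The three added edges $x_1y_1,x_2y_2,x_3y_3$ form an edge cut, so any cycle of $G$ is of one of two kinds.
\begin{itemize}
\item A cycle entirely inside $G_1-x$ or $G_2-y$ is a cycle of $G_1$ or $G_2$, so it has length at least $6$.
\item A cycle using the cut uses exactly two cut edges, say $x_iy_i$ and $x_jy_j$. It then consists of an $x_i$--$x_j$ path $P_1$ in $G_1-x$, an $x_j$--$x_i$ path $P_2$ in $G_2-y$, and the two cut edges. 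Now $P_1$ together with $x$ forms a cycle of $G_1$, so $|P_1|+2\ge 6$, giving $|P_1|\ge 4$. Similarly $|P_2|\ge 4$. The cycle therefore has length at least $4+4+2=10$.
\end{itemize}
Hence $G$ has girth at least $6$. For the upper bound, I need a $6$-cycle that survives the vertex-sum. A cycle of $G_1$ avoiding $x$ remains a cycle of $G$. Such a $6$-cycle exists in $G_1$ because $G_1$ contains a copy of $\HH$ minus at most a few vertices, and $\HH$ has many $6$-cycles.

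Making this existence claim precise is the only delicate point. The simplest route avoids tracking copies of $\HH$: strengthen the induction hypothesis to ``for every vertex $v$, there is a $6$-cycle avoiding $v$.'' This holds for $\HH$, since it is vertex-transitive with $28$ six-cycles, each vertex lying on $12$ of them. It is preserved under a vertex-sum. Given a vertex $v$ of $G$, say $v\in V(G_1-x)$, apply the hypothesis to $G_2$ with the vertex $y$: this gives a $6$-cycle of $G_2-y$, which lies in $G$ and avoids $v$. So girth $6$ holds for every $k$, and all three properties are established.
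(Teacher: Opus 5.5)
Your proof is correct and follows the paper's route: Theorem~\ref{thm:3connected} reduces the statement to showing that every $k$-vertex-sum of $\HH$ is bipartite, has $14k-2(k-1)=12k+2$ vertices, and has girth $6$, which is what Lemma~\ref{lem:Heawood_Property} supplies in the paper. Your girth argument differs only in two local details, and both variants are valid. First, you bound cycles crossing the $3$-edge cut below by $10$ using the girth of the summands, whereas the paper excludes $4$-cycles via a triangle at $x$ and bipartiteness. Second, you obtain a surviving $6$-cycle from the strengthened hypothesis that every vertex is avoided by some $6$-cycle, whereas the paper uses two pendant copies of $\HH-v$ in the tree $T_0$. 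One typo: $P_2$ should be a $y_j$--$y_i$ path in $G_2-y$, not an $x_j$--$x_i$ path.
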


Our next result concerns the girth.
 
\begin{theorem}\label{thm:girth}
Let $G$ be a connected det-extremal cubic graph.
Then the girth of $G$ is $3$, $5$ or $6$. 
\end{theorem}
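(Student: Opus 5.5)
By Theorem~\ref{thm:detper}, it suffices to show the following: every connected cubic graph $G$ with $d_t(G)=1$ has girth $3$, $5$ or $6$. Equivalently, we must rule out girth $4$ and girth at least $7$.

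\emph{Girth $4$.} This case is immediate. If the girth is $4$, then $G$ contains a $4$-cycle, so Theorem~\ref{AAKSZ-thm} gives $d_t(G)\ge 2$. Hence $G$ is not det-extremal. Alternatively, one can use the new proof of that fact promised in Corollary~\ref{cor:4cycle}.

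\emph{Girth at least $7$.} This is the main obstacle. I would argue via the determinant/permanent side. Write $\per A=\sum_\sigma \prod_i a_{i\sigma(i)}$ over permutations supported on $G$. Such permutations are exactly the spanning subgraphs of $G$ whose components are single edges (2-cycles) or cycles of $G$ traversed in either orientation. Det-extremality says all these terms carry the same sign. The sign of such a cover is $(-1)^{(\text{number of even cycles, counting edges as even})}$.

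Take two covers that differ locally. Suppose the cover $\sigma$ contains a long cycle $C$. Reversing the orientation of $C$ keeps the sign, so the interesting alterations are different ones: replacing $C$ (if even) by a perfect matching of $C$, which changes the number of even components by $|C|/2-1$. This forces parity constraints. For instance, every even cycle $C$ of length $\equiv 0 \pmod 4$ that is \emph{nice} (meaning $G-V(C)$ has such a cover) is forbidden. This is the cubic analogue of the bipartite Pfaffian criterion.

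So the plan is to show that a cubic graph of girth $\ge 7$ always has a nice cycle of length divisible by $4$, or some other sign-reversing pair of covers.

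\emph{Approach via colorings.} A more robust route is to work directly with $2$-coupon colorings. Here I would try a greedy/probabilistic construction exploiting large girth. Color vertices red/blue so that no vertex has a monochromatic neighborhood. With girth $\ge 7$, the neighborhoods $N(v)$ form a $3$-uniform hypergraph, and we need it to be properly $2$-colorable (Property~B).

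Two neighborhoods $N(u)$ and $N(v)$ share at most one vertex once the girth is at least $5$. With girth $\ge 7$, the hypergraph is moreover linear and has no short Berge cycles (its Berge girth is at least $4$). The hypergraph is $3$-regular and $3$-uniform.

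Known results say that a $k$-uniform $k$-regular hypergraph is $2$-colorable for $k\ge 4$. For $k=3$ the failures are exactly what we are characterizing. So I would instead use a structural fact: $3$-uniform $3$-regular hypergraphs that are not $2$-colorable must contain the Fano plane's structure locally. By Seymour's/Thomassen's results linked in Section~\ref{sec:B:configuration}, these correspond to bipartite incidence graphs built from Heawood pieces, which contain $6$-cycles in the incidence graph. In $G$, such a $6$-cycle translates to a short cycle of length at most $6$ in $G$. Concretely, I would show that a non-$2$-colorable neighborhood hypergraph contains three hyperedges pairwise meeting, forming a Berge triangle, and hence a cycle of length $\le 6$ in $G$.

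\emph{Key step.} The key step is to prove that a linear $3$-uniform $3$-regular hypergraph with no Berge triangle is $2$-colorable. I would first try an even-cycle/Pfaffian argument: orient the incidence bipartite graph, and invoke the bipartite girth-$6$ result (Theorem~\ref{thm:mccuaig} together with the order and girth facts of Heawood vertex-sums). The reduction to the $3$-connected case via $2$-edge-cuts would be handled by induction on $|V(G)|$.
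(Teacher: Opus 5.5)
\textbf{Verdict: the overall plan matches the paper's, but there is a genuine gap in the connectivity-$2$ case.}

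\textbf{What is fine.} Your outer reduction is sound and is the one the paper uses. A $4$-cycle is excluded by Theorem~\ref{AAKSZ-thm} (or Corollary~\ref{cor:4cycle}) together with Theorem~\ref{thm:detper}. For girth $\ge 7$, the incidence graph $G\times K_2$ of $\ONH(G)$ has girth $\ge 8$. So everything hinges on one statement: every connected det-extremal cubic bipartite graph contains a $6$-cycle. This is your ``key step'' and the paper's Lemma~\ref{lem:detgirth}. Your sign-reversing-cover and Property~B sketches only restate this goal, and ``non-$2$-colorable hypergraphs contain the Fano plane locally'' is not an available result. The $3$-connected case is indeed settled by Theorem~\ref{thm:mccuaig} and the girth of Heawood vertex-sums.

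\textbf{The gap.} You dispose of the connectivity-$2$ case with ``induction on $|V(G)|$ via $2$-edge-cuts'', and that induction does not go through as stated. Let $\{a_1b_1,a_2b_2\}$ be a $2$-edge cut with $a_1,a_2\in S$, and put $G_S=G[S]+a_1a_2$ and $G_{\bar S}=G[\bar S]+b_1b_2$. Suppose $Q+a_1a_2$ and $P+b_1b_2$ are central cycles of $G_S$ and $G_{\bar S}$; such cycles exist, since every edge of a cubic bipartite graph lies on one. Then $Q\cup P\cup\{a_1b_1,a_2b_2\}$ is a central cycle of $G$ of length $(|Q|+1)+(|P|+1)$. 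If both sides were det-extremal, this length would be $\equiv 0\pmod 4$, contradicting Lemma~\ref{thm:central4} for $G$. So det-extremality passes to at most one side. For example, in the $44$-vertex graph $L$ of Claim~\ref{cite:proof:FJLS}, cutting off one Heawood piece leaves a connected cubic bipartite graph on $30$ vertices, which is not det-extremal by Theorem~\ref{thm:fjlsorders}.

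To make your induction work you would have to do two things.
\begin{itemize}
\item Show that some side does inherit det-extremality.
\item Handle the marker edge: the $6$-cycle supplied by induction might use it. The hypothesis would need strengthening, e.g.\ to ``$G-e$ has a $6$-cycle for every edge $e$'', and the induction step would then have to preserve that stronger form.
\end{itemize}

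\textbf{How the paper handles it.} It bypasses all of this with the FJLS decomposition cactus and Theorem~\ref{thm:fjls41}. At a vertex of degree $2$ of the cactus (Lemma~\ref{lem:cactus}), the $3$-connected piece $G_i$ has exactly one marker edge $f$, is not a $3$-bond, and is det-extremal. Hence it is a Heawood vertex-sum, and Lemma~\ref{lem:Heawood_Property}(4) yields a $6$-cycle in $G_i-f\subseteq G$. Without this ingredient or a worked-out substitute, your girth-$\ge 7$ case is not proved.
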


All three girths permitted by Theorem~\ref{thm:girth} are known to occur: girths $3$ and $5$ are realized by examples in \cite{AAKSZ}
(stated there in the language of the total domatic number),
and girth $6$ is realized by the Heawood graph.
Hence Theorem~\ref{thm:girth} is best possible.

From Theorem~\ref{thm:fjlsorders}, we know that a smallest connected det-extremal cubic bipartite graph has $14$ vertices.
However, as far as we know, no non-bipartite example of order less than $60$ was previously known, see \cite{AAKSZ} for an example of order $60$.
It follows from our results that the smallest such example has at least $28$ vertices, and we construct one with exactly $28$ vertices (see Figure~\ref{fig:28vertex}).  

\begin{theorem}\label{thm:no22}
Every connected det-extremal cubic non-bipartite graph has at least $28$ vertices.
Moreover, there is a connected det-extremal cubic non-bipartite graph with $28$ vertices.  
\end{theorem}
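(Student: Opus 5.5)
The plan has two parts: a lower bound of $28$ vertices, obtained from the structure results above, and an explicit $28$-vertex example.

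\textbf{Lower bound.} Let $G$ be a connected det-extremal cubic non-bipartite graph. By Theorem~\ref{thm:3connected}, or more directly Proposition~\ref{thm:three:conditions}(1), $G$ is not $3$-connected. So $G$ has a cut vertex, a bridge, or a $2$-edge cut; for cubic graphs, vertex- and edge-connectivity coincide.

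The main tool will be a decomposition lemma along small edge cuts. Let $F$ be a $1$- or $2$-edge cut separating $G$ into sides $L$ and $R$. Using Theorem~\ref{thm:detper}, I would replace det-extremality by $d_t=1$: $G$ has no $2$-coupon coloring. I then glue partial colorings of the two sides. Each side is completed to a cubic graph by a small gadget: for a $2$-edge cut, join the two ends by an edge or through a small fixed piece; for a bridge, attach a small fixed piece such as $K_4$ with an edge subdivided. The aim is to show that if $G$ has $d_t=1$, then at least one completed side, call it $G'$, also has $d_t=1$. Otherwise, the $2$-coupon colorings of the two sides could be matched up across $F$; this requires checking, for each pattern of boundary colors, that the "demands" at the cut vertices are compatible.

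Iterating this lemma yields a $3$-connected "leaf block" which is det-extremal. By Theorem~\ref{thm:3connected} it is a $k$-vertex-sum of $\HH$, hence has at least $14$ vertices, after removing gadget vertices. Now use non-bipartiteness. If every leaf block is bipartite, the odd cycle must use the connecting structure, so there must be at least two bipartite leaf pieces, each of size about $14$ minus the gadget. Alternatively, some piece is non-bipartite and not $3$-connected, and we recurse on it. A counting argument over the decomposition tree should then give $|V(G)|\ge 28$. This is consistent with the girth restriction of Theorem~\ref{thm:girth}: the odd cycle is a triangle or a pentagon. I would also use the parity constraint from Proposition~\ref{thm:three:conditions}(3), $|V|\equiv 2\pmod{12}$, to rule out small intermediate sizes. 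Theorem~\ref{thm:fjlsorders} would be applied to any bipartite sides that are only $2$-connected.

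\textbf{Construction.} For the $28$-vertex example, I would take two copies of $\HH$, delete an edge from each, and reconnect the four degree-$2$ vertices so as to create an odd cycle while keeping $d_t=1$. For instance, delete one vertex from each copy and join the three pairs of neighbours with a twist that makes the graph non-bipartite, so that the total is $26$ vertices plus a two-vertex gadget, giving $28$. I would verify $d_t=1$ by showing that any $2$-coupon coloring would restrict to a $2$-coupon coloring of $\HH$, contradicting $d_t(\HH)=1$. Failing that, I would check $|\det A|=\per A$ directly by computer.

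\textbf{Main obstacle.} The hard step is the cut-decomposition lemma. I must show that $d_t=1$ passes to at least one side with a suitable gadget, and do it precisely enough that the sizes of the gadgets and sides add up to exactly $28$ and not merely to a weaker bound. I would first try to handle $2$-edge cuts by a direct case analysis on the colors of the four cut endpoints and their required neighborhoods.
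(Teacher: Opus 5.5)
\textbf{There are genuine gaps in both halves: the lower bound rests on an unproved lemma that is false as stated, and the construction does not produce a valid example.}

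\textbf{Lower bound.} Everything depends on your cut-decomposition lemma, which you do not prove, and in the form you propose it is false. The paper's $28$-vertex example has a bridge $th$. If each side is completed by a gadget such as $K_4$ with a subdivided edge, both completed graphs contain a $4$-cycle, so they have $d_t\ge 2$ by Corollary~\ref{cor:4cycle}, although $d_t(G)=1$. Across a bridge, det-extremality is a sign-compatibility condition between the two sides, coming from $\det A(F)=\det A(F_1)\det A(F_2)-\det A(F_1-x)\det A(F_2-y)$; it is not inherited by one side. Even granting some such lemma, counting Heawood-sized leaf pieces cannot separate $22$ from $28$. For instance, $\HH$ with one edge subdivided, joined by a bridge to a $7$-vertex piece, has $22$ vertices and only one Heawood-sized piece. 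Excluding exactly this kind of graph is the real content of the theorem.

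The missing idea is to work with $L=G\times K_2$. It is connected and det-extremal of order $2n$ with $4\mid 2n$, so Theorem~\ref{thm:fjlsorders} gives $n=22$ or $n\ge 28$ at once. For $n=22$, use the order-$44$ structure from Funk, Jackson, Labbate and Sheehan: three copies $H_i$ of $\HH-e$ attached to a $3$-bond $\{a,b\}$. The swap $\sigma\colon(v,i)\mapsto(v,1-i)$ is a polarity of $L$ with no absolute vertex. It must interchange $a$ and $b$ and fix some $V(H_i)$ with $\sigma(x_i)=y_i$, so it extends to a polarity of $\HH$. By Baer's theorem this polarity has at least three absolute vertices in each part, and at most one of the edges $u\sigma(u)$ is $x_iy_i$. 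That gives an absolute vertex of $\sigma$, a contradiction.

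\textbf{Construction.} Any connected reconnection of the four degree-$2$ vertices of two copies of $\HH-e$ is bipartite. In each copy the ends of $e$ lie in opposite parts, so a proper $2$-coloring of one copy extends across both new edges. Likewise every vertex-sum of two Heawood graphs is bipartite whatever the twist, since the three neighbors of the deleted vertex lie in one part. Your two-vertex gadget is unspecified. Inserting it in the natural way, by subdividing two edges of the vertex-sum and joining the new vertices, gives a $3$-connected non-bipartite cubic graph, which is not det-extremal by Proposition~\ref{thm:three:conditions}. Your proposed verification also fails: $\HH$ does not sit in these graphs with its neighborhoods intact, so a $2$-coupon coloring does not restrict to one of $\HH$. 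Any valid example must have connectivity at most $2$. The paper subdivides a specific edge $e_T$ of the Twinplex graph $\mathbb{T}$ and an edge of $\HH$, then joins the two subdivision vertices by a bridge. It checks $|\det A|=\per A=49152$ using the bridge formulas, and the choice of $e_T$ matters.
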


By the correspondence of Theorem~\ref{thm:detper},
Theorems~\ref{thm:3connected}, \ref{thm:girth}, and \ref{thm:no22},
as well as Proposition~\ref{thm:three:conditions},
can be restated in terms of the total domatic number.

\begin{corollary}\label{cor:nonbip2}
Let $G$ be a connected cubic graph.
\begin{itemize}
\item[\rm (i)] When $d_t(G)=1$, $G$ has girth $3$, $5$, or $6$.
\item[\rm (ii)] When $G$ is $3$-connected, $d_t(G)\ge 2$ if at least one of the following holds.
\textup{(1)} $G$ is non-bipartite.
\textup{(2)} The girth of $G$ is not $6$. 
\textup{(3)} $|V(G)|\not\equiv2\pmod{12}$.
\item[\rm (iii)] When $d_t(G)=1$ and $G$ is non-bipartite, $G$ has at least $28$ vertices.
\end{itemize}
\end{corollary}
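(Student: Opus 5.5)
This corollary is a direct translation, so the plan is to combine Theorem~\ref{thm:detper} with each structural result. Throughout, $G$ is a connected cubic graph, and by Theorem~\ref{thm:detper} the conditions $d_t(G)=1$ and ``$G$ is det-extremal'' are interchangeable. Each item then follows by replacing one condition with the other in the corresponding theorem; no new combinatorics should be needed.

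For (i), assume $d_t(G)=1$. Theorem~\ref{thm:detper} makes $G$ det-extremal, and Theorem~\ref{thm:girth} then gives girth $3$, $5$ or $6$. For (iii), assume $d_t(G)=1$ and $G$ non-bipartite. Then $G$ is a connected det-extremal cubic non-bipartite graph, so the first part of Theorem~\ref{thm:no22} gives $|V(G)|\ge 28$.

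For (ii), let $G$ be $3$-connected and suppose one of (1)--(3) holds. Proposition~\ref{thm:three:conditions} says $G$ is not det-extremal. A $3$-connected graph is connected, so Theorem~\ref{thm:detper} applies, and its contrapositive gives $d_t(G)\ne 1$. Since $G$ is cubic, $V(G)$ itself is a total dominating set, so $d_t(G)\ge 1$. Hence $d_t(G)\ge 2$.

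There is no real obstacle; the only care needed is to check the hypotheses at each step. Connectedness is required for Theorem~\ref{thm:detper} and is assumed throughout (and implied by $3$-connectivity in (ii)). In (ii) we also need $d_t(G)\ge 1$ to turn $d_t(G)\ne1$ into $d_t(G)\ge2$, which holds because $\delta(G)=3\ge1$.
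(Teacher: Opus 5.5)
Your proposal is correct and matches the paper's approach: the paper obtains this corollary by translating Theorem~\ref{thm:girth}, Proposition~\ref{thm:three:conditions}, and Theorem~\ref{thm:no22} through the equivalence of Theorem~\ref{thm:detper}, exactly as you do. Your explicit remark that $d_t(G)\ge 1$ (since $V(G)$ is a total dominating set of a cubic graph), which turns $d_t(G)\ne 1$ into $d_t(G)\ge 2$ in (ii), is a detail the paper leaves implicit.
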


As noted in Subsection~\ref{sub:intro:dt}, the known non-bipartite examples of cubic graphs with total domatic number $1$ in the literature contain a triangle or a $5$-cycle, and Corollary~\ref{cor:nonbip2}(i)  shows that this is not a coincidence.

The first item (1) of Corollary~\ref{cor:nonbip2}(ii) answers the question of Desormeaux, Haynes, and Henning \cite{DHH}, whether a triangle forces $d_t(G)\ge2$ for a connected cubic graph $G$, in the affirmative for $3$-connected cubic graphs. 
The last item (3) of Corollary~\ref{cor:nonbip2}(ii) shows that a simple counting of vertices often suffices, whereas deciding $d_t(G)\ge2$ is NP-complete for a graph $G$ \cite{HT}.

The bound in Corollary~\ref{cor:nonbip2}(iii) is sharp as there exists a connected cubic non-bipartite graph $G$ of order $28$ with $d_t(G)=1$.
Thus the minimum order of such a graph is $28$.

\medskip

Our results also have a consequence for configurations.
A \emph{$3$-configuration} is a $3$-uniform $3$-regular hypergraph in which two distinct vertices lie in at most one common hyperedge, and a \emph{blocking set} of a configuration is a set of vertices meeting every hyperedge and containing no hyperedge.
The existence of blocking sets in $3$-configurations has been investigated since the 1990s \cite{DGropp,HGropp91,Gropp,EGS21}, and the question of which orders admit a blocking set free $3$-configuration was posed three times at the British Combinatorial Conference \cite{BCC1,BCC2,BCC3} before being settled by Funk, Jackson, Labbate, and Sheehan \cite{FJLS}.
In Section~\ref{sec:B:configuration} we deduce from our results that every triangle-free $3$-configuration has a blocking set (Corollary~\ref{cor:triangle}), whereas configurations of small order were verified by computer enumeration.

The paper is organized as follows.
Section~\ref{sec:prelim} collects the results we quote and develops some basic properties of bipartite double covers.
Section~\ref{sec:hyper} establishes the main tool and then proves Theorem~\ref{thm:detper}.
Section~\ref{sec:heawood} studies vertex-sums of the Heawood graph and proves Theorem~\ref{thm:3connected} and Proposition~\ref{thm:three:conditions}. 
Section~\ref{sec:cubic} proves Theorems~\ref{thm:girth} and \ref{thm:no22}.
Section~\ref{sec:B:configuration} interprets our results in the language of configurations.
 
\section{Preliminaries}\label{sec:prelim}

For a bipartite graph with parts of equal size, det-extremality is defined via a biadjacency matrix in the literature, as we now record. 
Let $G$ be a bipartite graph with parts $X$ and $Y$.
A \emph{biadjacency matrix} $B$ of $G$ is the $(0,1)$-matrix $B=(b_{xy})_{x\in X,\,y\in Y}$, rows indexed by $X$ and columns by $Y$, with $b_{xy}=1$ if and only if $xy\in E(G)$.
It is the off-diagonal block of the adjacency matrix $A$, that is, $A=\left(\begin{smallmatrix}O &B\\B^{\mathsf T}& O\end{smallmatrix}\right)$.
If $|X|=|Y|$, then $\per B$ is the number of perfect matchings of $G$, $\per A=(\per B)^2$, and $|\det A|=|\det B|^2$.
Thus $|\det A|=\per A$ if and only if $|\det B|=\per B$.
We use whichever of $A$ and $B$ is convenient when we speak of the det-extremality (or Pfaffian) of bipartite graphs with parts of equal size.

\subsection{Basic observations}
The following are folklore, but we include the proofs for completeness.
 
\begin{lemma}\label{lem:parity}
Let $G$ be a cubic graph. 
\begin{itemize}
\item[\rm (1)] If $G-e$ is bipartite for some $e\in E(G)$, then $G$ is bipartite.
\item[\rm (2)] If $G$ is connected and bipartite, then its connectivity is $2$ or $3$.
\end{itemize}
\end{lemma}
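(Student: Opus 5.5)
For (1), suppose $e=uv$ and $G-e$ is bipartite with parts $X$ and $Y$. If $u$ and $v$ lie in different parts, then adding $e$ preserves the bipartition and $G$ is bipartite. So assume $u,v\in X$. We count edges of $G-e$ in two ways. Every edge of $G-e$ joins $X$ to $Y$, so the degree sum over $X$ in $G-e$ equals that over $Y$. In $G-e$ the vertices $u,v$ have degree $2$ and all others degree $3$. This gives $3|X|-2=3|Y|$, which is impossible modulo $3$. Hence $u$ and $v$ lie in different parts, and $G$ is bipartite. If $G-e$ is disconnected, the same count applied to the component containing $u$ (or to both components) still gives a contradiction. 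Indeed, the only case to check is $u,v$ in the same part of one component, which yields $3a-2=3b$ for that component's parts.

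For (2), let $G$ be connected, cubic and bipartite with parts $X,Y$. Connectivity is at most $3$, the minimum degree. It remains to show that $G$ has no cut vertex; connectivity $1$ would mean a cut vertex, and $G$ is not complete on few vertices, except $K_{3,3}$, which has connectivity $3$ anyway.

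Suppose $w\in X$ is a cut vertex. Then $G-w$ has a component $C$ receiving $t\in\{1,2\}$ of the edges at $w$; one can always choose a component with $t\le 2$, since there are at least two components sharing the three edges. Let $C$ have parts $X_C\subseteq X$ and $Y_C\subseteq Y$. Counting the edges of $C$ from each side, $X_C$ contributes $3|X_C|$, while $Y_C$ contributes $3|Y_C|-t$, since exactly $t$ vertices of $Y_C$ lose one edge to $w$. Then $3|X_C|=3|Y_C|-t$, which is impossible modulo $3$ because $t\in\{1,2\}$. So $G$ is $2$-connected, and its connectivity is $2$ or $3$.

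The argument in both parts is just this mod-$3$ degree count, so I expect no real obstacle. The only care needed is to handle the disconnected case in (1) and to choose the component with $t\le 2$ in (2).
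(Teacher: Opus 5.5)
Your proof is correct and uses the same mod-$3$ edge count as the paper: part (1) is identical, and in (2) you rule out a cut vertex $w$ directly (each component of $G-w$ has $t\in\{1,2\}$ vertices of reduced degree, all on the side opposite $w$), whereas the paper first uses that vertex and edge connectivity coincide for cubic graphs and then rules out a bridge, where only one vertex has reduced degree. Your variant avoids that auxiliary fact at no extra cost, and your side remarks about a disconnected $G-e$ and about complete graphs are unnecessary, since your count over an arbitrary bipartition of $G-e$ already covers the disconnected case.
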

 
\begin{proof} 
Let $H$ be a subcubic bipartite graph with parts $X$ and $Y$ such that $\delta(H)\ge 2$, and let $d$ be the number of vertices of degree $2$ in $H$. Suppose that every vertex of degree $2$ is in $X$.
Then counting the edges of $H$ from each side gives $3|X|-d=|E(H)|=3|Y|$, so $3$ divides $d$.
 
For (1), suppose $G-e$ is bipartite with $e=uv$.
If $u$ and $v$ lie in the same part, then applying the count to $H=G-e$ with $d=2$ gives a contradiction.
Thus $u$ and $v$ lie in different parts and so $G$ is bipartite.
For (2), recall that in a cubic graph the vertex connectivity equals the edge connectivity, so it suffices to show that $G$ has no bridge.
Let $e=uv$ be a bridge of $G$ and let $H$ be the connected component of $G-e$ containing $u$, with the bipartition inherited from $G$.
The count with $d=1$ again gives a contradiction.
\end{proof}

Here is a simple lemma on det-extremal graphs.
 
\begin{lemma}\label{lem:coverdetM}
If a graph $G$ has a perfect matching, then $G$ is det-extremal if and only if every connected component of $G$ is det-extremal.
\end{lemma}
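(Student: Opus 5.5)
The plan is to use the block structure of the adjacency matrix. Let $G_1,\dots,G_m$ be the connected components of $G$, with adjacency matrices $A_1,\dots,A_m$. Ordering the vertices component by component, we get $A=A_1\oplus\cdots\oplus A_m$. Both the determinant and the permanent are multiplicative over direct sums, so
\[
|\det A|=\prod_{i=1}^m|\det A_i| \quad\text{and}\quad \per A=\prod_{i=1}^m \per A_i .
\]
For the permanent this holds because every permutation with nonzero contribution to $\per A$ maps each block to itself.

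Next we use the hypothesis that $G$ has a perfect matching. A perfect matching of $G$ restricts to a perfect matching of each component $G_i$. If $M_i$ is such a matching, the permutation of $V(G_i)$ that swaps the endpoints of each edge of $M_i$ contributes $1$ to $\per A_i$. All terms of the permanent are nonnegative, so $\per A_i\ge 1>0$ for every $i$.

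Now compare factor by factor. For every $i$ we have $0\le|\det A_i|\le\per A_i$. If every component is det-extremal, the two products agree factor by factor, so $G$ is det-extremal. Conversely, suppose $|\det A_j|<\per A_j$ for some $j$. Then, since every $\per A_i>0$,
\[
\prod_{i}|\det A_i|\;\le\;|\det A_j|\prod_{i\ne j}\per A_i\;<\;\per A_j\prod_{i\ne j}\per A_i=\per A .
\]
So $G$ is not det-extremal.

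There is no real obstacle. The one point that needs care is the strict inequality in the converse, and that is exactly where the perfect matching hypothesis is used. Without it, some component could have $\per A_i=0$, making both sides of the product equal to zero, and the equivalence would fail.
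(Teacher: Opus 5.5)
Your proof is correct and follows the same route as the paper. You use the block-diagonal decomposition, multiplicativity of $\det$ and $\per$, and the bound $|\det A_i|\le\per A_i$, and the perfect matching ensures nonvanishing permanents so that the converse is strict; the only cosmetic difference is that you show $\per A_i>0$ for each component, whereas the paper notes $\per A\neq 0$ directly, which is equivalent by multiplicativity.
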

 
\begin{proof}
Let $A$ be the adjacency matrix of $G$.
Let $G_1,\dots,G_t$ be the connected components of $G$ and let $A_i$ be the adjacency matrix of $G_i$ for each $i\in\{1,\ldots,t\}$. 
Ordering the vertices component by component makes $A$ block diagonal with diagonal blocks $A_1,\dots,A_t$, and both determinant and permanent are multiplicative over the blocks.
Since $|\det A_i|\le\per A_i$ for each $i$, it follows that if every connected component of $G$ is det-extremal then so is $G$, and that the converse holds whenever $\per A\ne0$.
A perfect matching of $G$ gives a permutation contributing $1$ to $\per A$, so $\per A\ne0$.
\end{proof}
 
A cycle $C$ of a bipartite graph $G$ is \emph{central} if $G-V(C)$ has a perfect matching.
It is well-known that a $4$-cycle $C$ of a cubic bipartite graph $G$ is central.
Letting $H=G-V(C)$, with parts $X$ and $Y$ inherited from $G$, for every $S\subseteq X$,  
$3|N_H(S)|\ge 3|S|-2$ and thus $|N_H(S)|\ge |S|$.
By Hall's theorem, $H$ has a perfect matching.
Combined with the following lemma, this yields Observation~\ref{lem:noc4}.
 
\begin{lemma}[{\cite[Lemma~1.1(b)]{FJLS}}]\label{thm:central4} 
A bipartite graph is det-extremal if and only if every central cycle has length congruent to $2$ modulo $4$.
\end{lemma}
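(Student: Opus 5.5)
The statement to prove is Lemma~\ref{thm:central4}: a bipartite graph $G$, with parts $X$ and $Y$ of equal size and biadjacency matrix $B$, is det-extremal if and only if every central cycle has length congruent to $2$ modulo $4$. By the remark at the start of this section it suffices to work with $B$ and show $|\det B|=\per B$. The plan is to compare the signs of perfect matchings through the cycles formed by symmetric differences.

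First, I will note that each perfect matching $M$ of $G$ corresponds to a bijection $\sigma_M\colon X\to Y$. It contributes $\operatorname{sgn}(\sigma_M)$ to $\det B$ once $X$ and $Y$ are identified with $\{1,\dots,n\}$, and it contributes $1$ to $\per B$. Consequently $|\det B|=\per B$ holds exactly when all perfect matchings have the same sign. This includes the degenerate case with no perfect matching: then both sides are $0$, and also no cycle is central, so both sides of the claimed equivalence hold.

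Next, I will take two perfect matchings $M$ and $M'$. Their symmetric difference is a disjoint union of $(M,M')$-alternating cycles $C_1,\dots,C_r$. The permutation $\sigma_{M'}^{-1}\sigma_M$ on $X$ has as its nontrivial cycles exactly the traces of the $C_i$ on $X$, and a cycle $C_i$ of length $2\ell_i$ gives a cyclic permutation of length $\ell_i$. Therefore $\operatorname{sgn}(\sigma_M)\operatorname{sgn}(\sigma_{M'})=\prod_i(-1)^{\ell_i-1}$. In particular, if $M$ and $M'$ differ on a single cycle $C$, they have the same sign if and only if $|C|/2$ is odd, that is, $|C|\equiv2\pmod 4$.

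Now the equivalence follows. For the forward direction, let $C$ be central and let $N$ be a perfect matching of $G-V(C)$. Since $C$ is an even cycle, its edges split into two perfect matchings $P_1$ and $P_2$ of $C$. Then $N\cup P_1$ and $N\cup P_2$ are perfect matchings of $G$ differing exactly on $C$. Det-extremality forces them to have equal sign, so $|C|\equiv 2\pmod 4$. For the converse, given perfect matchings $M$ and $M'$, each cycle $C_i$ of $M\triangle M'$ is central, because $M$ restricted to $V(G)\setminus V(C_i)$ is a perfect matching of $G-V(C_i)$. By hypothesis every $\ell_i$ is odd, so each factor $(-1)^{\ell_i-1}$ equals $1$ and $M$, $M'$ have the same sign.

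The only step that needs care is the sign identity in the second paragraph, namely checking that an alternating $2\ell$-cycle produces an $\ell$-cycle of the permutation. The rest is routine bookkeeping.
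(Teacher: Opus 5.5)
Your proof is correct. The paper does not prove this lemma itself; it quotes it from \cite{FJLS}. What you give is the standard self-contained argument, and every step checks out:
\begin{itemize}
\item $|\det B|=\per B$ holds exactly when all perfect matchings have the same sign.
\item Two perfect matchings that differ on an alternating cycle of length $2\ell$ differ in sign by $(-1)^{\ell-1}$.
\item Every central cycle arises as the symmetric difference of two perfect matchings, and every component of such a symmetric difference is central.
\end{itemize}

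The one thing to add is the case you set aside by assuming $|X|=|Y|$. The lemma is stated for all bipartite graphs, and the reduction from $A$ to $B$ is only available when the sides have equal size. If $|X|\neq|Y|$, any permutation contributing to $\per A$ or $\det A$ would have to map $X$ injectively into $Y$ and $Y$ injectively into $X$. Hence $\det A=\per A=0$ and the graph is trivially det-extremal. On the other side, $G-V(C)$ still has unequal sides for every cycle $C$, so no cycle is central. The equivalence therefore holds vacuously in this case, but it should be stated.
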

 
\begin{observation}\label{lem:noc4}
A cubic bipartite graph containing a $4$-cycle is not det-extremal.
\end{observation}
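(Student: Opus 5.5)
The statement follows by combining the centrality of $4$-cycles, established in the paragraph just above, with Lemma~\ref{thm:central4}. Let $G$ be a cubic bipartite graph containing a $4$-cycle $C$.

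\emph{Step 1: $C$ is central.} First I fix the parts. If $G$ has a perfect matching, its two parts $X$ and $Y$ have equal size. Even without assuming a perfect matching, counting edges in the cubic bipartite graph gives $3|X|=|E(G)|=3|Y|$, so $|X|=|Y|$ regardless. Set $H=G-V(C)$. Then $H$ has parts $X'$ and $Y'$ of equal size, since $C$ uses two vertices from each part.

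Next I verify Hall's condition in $H$. Take $S\subseteq X'$. In $G$ there are $3|S|$ edges leaving $S$. At most two of them go into $V(C)$: such a vertex of $X'$ is adjacent to a vertex of $C\cap Y$, and each of the two vertices of $C\cap Y$ has only one neighbour outside $C$. So at least $3|S|-2$ edges of $H$ go from $S$ to $N_H(S)$. Each vertex of $N_H(S)$ has degree at most $3$, which gives $3|N_H(S)|\ge 3|S|-2$. Since both sides are integers, $|N_H(S)|\ge |S|$. Because $|X'|=|Y'|$, Hall's theorem gives a perfect matching of $H$, so $C$ is a central cycle.

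\emph{Step 2: conclude.} The central cycle $C$ has length $4\equiv 0\pmod 4$, not $2\pmod 4$. By Lemma~\ref{thm:central4}, $G$ is therefore not det-extremal.

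Everything here is routine. The only point needing care is the edge count in Step 1, namely that at most two edges from $S$ can be absorbed by $C$. This uses that each vertex of $C$ already spends two of its three edges on $C$, so each has exactly one edge leaving $C$.
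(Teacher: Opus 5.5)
Your proof is correct and follows the paper's route: the paper also shows that a $4$-cycle $C$ is central by applying Hall's theorem to $H=G-V(C)$ via the count $3|N_H(S)|\ge 3|S|-2$, and then invokes Lemma~\ref{thm:central4}. Your extra justifications, namely that the two sides of $H$ have equal size and that each vertex of $C$ has exactly one edge leaving $C$, fill in details the paper leaves implicit.
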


\subsection{Bipartite double covers}\label{sub:doublecover}

The \emph{bipartite double cover} of a graph $G$, also called its \emph{Kronecker double cover}, is the graph $G\times K_2$ with vertex set $V(G)\times\{0,1\}$ in which $(u,i)$ and $(v,1-i)$ are adjacent for every $i\in\{0,1\}$ whenever $uv\in E(G)$.
Equivalently, $G\times K_2$ is the bipartite graph whose biadjacency matrix is the adjacency matrix of $G$.
 
\begin{lemma}\label{lem:coverprop}
Let $G$ be a connected graph. Then the following hold. 
\begin{itemize}
\item[\rm(1)] $G$ is det-extremal if and only if $G\times K_2$ is det-extremal.
\item[\rm(2)] If $G$ has girth at least $7$, then $G\times K_2$ has girth at least $8$.
\item[\rm(3)] If $G$ is non-bipartite, then $G\times K_2$ is connected.
\item[\rm(4)] If $G$ is $3$-connected, cubic, and non-bipartite, then $G\times K_2$ is cubic and $3$-connected.
\end{itemize}
\end{lemma}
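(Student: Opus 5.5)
We take the four items in order; each is elementary once we use that the biadjacency matrix of $G\times K_2$ is exactly the adjacency matrix $A$ of $G$.

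For (1), we use the preliminary remark that a bipartite graph with equal parts is det-extremal if and only if $|\det B|=\per B$ for a biadjacency matrix $B$. Taking $B=A$ shows that $G\times K_2$ is det-extremal if and only if $|\det A|=\per A$, which is the definition of $G$ being det-extremal. Connectedness plays no role here.

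For (2), let $C$ be a cycle of length $\ell$ in $G\times K_2$. Since $G\times K_2$ is bipartite, $\ell$ is even, so it suffices to exclude $\ell\in\{4,6\}$. Project $C$ to $G$ via $(u,i)\mapsto u$; this gives a closed walk $W$ of length $\ell$ in $G$.
\begin{itemize}
\item $W$ has no immediate backtracking. If it went $u,v,u$, the lifts would be $(u,i),(v,1-i),(u,i)$, so $C$ would repeat a vertex.
\item A closed walk of length at most $6$ with no backtracking, in a graph of girth at least $7$, cannot exist. Such a walk always contains a cycle of length at most its own length: take a shortest closed subwalk between two repeated vertices. If that subwalk is not a cycle, it has length $\le 2$, which would force a backtrack.
\end{itemize}
Hence $\ell\ge 8$.

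For (3), use the standard argument.
\begin{itemize}
\item Fix a vertex $u$ and an odd closed walk through it, which exists because $G$ is connected and non-bipartite. Lifting this walk joins $(u,0)$ to $(u,1)$.
\item Lifts of paths in $G$ join $(u,0)$ to $(v,j)$ for some $j$, and then, via the previous step, to both $(v,0)$ and $(v,1)$.
\end{itemize}

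For (4), regularity is immediate: each $(u,i)$ has exactly the neighbours $(v,1-i)$ with $v\in N_G(u)$. By (3), $G\times K_2$ is connected. As used in Lemma~\ref{lem:parity}, vertex and edge connectivity coincide for cubic graphs, so it suffices to show that $G\times K_2$ has no edge cut of size at most $2$. Suppose $F$ is a minimal such cut separating $S$ from its complement $\bar S$.
\begin{itemize}
\item Consider the involution $\sigma:(u,i)\mapsto(u,1-i)$. It is an automorphism, and $F\cup\sigma(F)$ has at most $4$ edges.
\item Project $F$ to $G$: let $\pi(F)$ be the set of at most $2$ edges $uv$ having a lift in $F$. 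Since $G$ is $3$-edge-connected, $G-\pi(F)$ is connected.
\item The preimage of $G-\pi(F)$ is a double cover of it. It is therefore either connected or two copies interchanged by $\sigma$, and in the latter case $G-\pi(F)$ is bipartite.
\item If the preimage is connected, it lies in $(G\times K_2)-F$. It meets every vertex, so $(G\times K_2)-F$ is connected, a contradiction.
\item If the preimage is disconnected, then $G-\pi(F)$ is bipartite and $G$ is cubic. Applying Lemma~\ref{lem:parity}(1) once, or a variant of its counting argument when $|\pi(F)|=2$, will force $G$ to be bipartite, a contradiction.
\end{itemize}

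The main obstacle is this last case with two deleted edges, since Lemma~\ref{lem:parity}(1) only handles one. The plan is to redo the degree count. If $G-\{e_1,e_2\}$ is bipartite with parts $X,Y$, then $3|X|-d_X=3|Y|-d_Y$, where $d_X$ and $d_Y$ count deficient endpoints. If both $e_1$ and $e_2$ joined vertices on the same side, they would lie within $X$ or within $Y$, and $d_X-d_Y\in\{\pm4,0\}$. The only case consistent with divisibility by $3$ is one edge inside $X$ and one inside $Y$. That case needs an extra argument, for instance a reduction to the minimality of the cut $F$ in $G\times K_2$.
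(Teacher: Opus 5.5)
Items (1) and (3) match the paper's argument. Your (2) is a correct variant. You show that the projection of a short cycle is a non-backtracking closed walk and extract from it a cycle of length at most $6$. The paper instead splits the cycle at the two lifts of a repeated vertex into two odd closed walks and finds a triangle.

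\textbf{The gap is in the last case of (4).} When $|\pi(F)|=2$ and $G-\pi(F)$ is bipartite, you plan to derive that $G$ is bipartite. No such contradiction exists. Your own count leaves the case with one edge of $\pi(F)$ inside each part, and this case really occurs for $3$-connected cubic non-bipartite graphs. For example, take $G=K_4$ with $\pi(F)$ a perfect matching: then $G-\pi(F)$ is a $4$-cycle. So the contradiction must come from $G\times K_2$, not from the structure of $G$. The appeal to ``minimality of the cut'' does not supply it.

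\textbf{The missing idea is a count of lifts.} Every edge of $\pi(F)$ has a lift in $F$, and $|F|\le 2$. So when $|\pi(F)|=2$, the set $F$ contains exactly one of the two lifts of each edge of $\pi(F)$. Let $P,Q$ be the parts of $G-\pi(F)$. Then $(G-\pi(F))\times K_2$ has exactly two components, induced by $(P\times\{0\})\cup(Q\times\{1\})$ and by $(P\times\{1\})\cup(Q\times\{0\})$. Since $G$ is non-bipartite, some $e=uv\in\pi(F)$ has both ends in the same part, say $P$. Both lifts $(u,0)(v,1)$ and $(u,1)(v,0)$ then join the two components. The lift not in $F$ survives in $(G\times K_2)-F$, which is therefore connected, a contradiction. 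With this argument the two-edge degree count is unnecessary. Lemma~\ref{lem:parity}(1) is needed only to exclude $|\pi(F)|=1$.
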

 
\begin{proof} Let $G'=G\times K_2$.
 
(1) Let $A$ be the adjacency matrix of $G$.
By the definition, $G'$ is the bipartite graph with biadjacency matrix $A$.
Ordering its two sides, and using that $A$ is symmetric, its adjacency matrix is $A'=\left(\begin{smallmatrix}0&A\\A&0\end{smallmatrix}\right)$.
Then $\det A'=(-1)^{|V(G)|}(\det A)^{2}$ and $\per A'=(\per A)^{2}$, so $|\det A'|=|\det A|^{2}$.
Both $|\det A|$ and $\per A$ are nonnegative reals, so $|\det A|^{2}=(\per A)^{2}$ if and only if $|\det A|=\per A$.
 
(2) Since $G'$ is bipartite, its girth is even.
Suppose $G'$ has a cycle $C'$ of length $L\in\{4,6\}$, and let $C$ be the projection of $C'$ to $G$, a closed walk in $G$.
Note that two vertices of $C'$ have the same projection precisely when they are the two lifts of one vertex of $G$.
If $C$ has no repeated vertex, then $C$ is a cycle of $G$ of length at most $6$, a contradiction.
Suppose that $C$ repeats a vertex $v$. 
Then $C'$ contains both lifts of $v$.
These two vertices divide $C'$ into two paths, whose projections are closed walks at $v$ in $G$ of odd lengths $\ell_1,\ell_2$ with $\ell_1+\ell_2=L\le 6$.
The shorter, say $\ell_1$, is odd and at most $3$.
It is not $1$, since $G$ has no loop, so $\ell_1=3$ and $G$ has a triangle, a contradiction.
 
(3) Note that $G$ has a closed walk of odd length through any vertex $v$, and it lifts to a walk from $(v,0)$ to $(v,1)$, so the two lifts of every vertex lie in one connected component.
Thus $G'$ is connected.
 
(4) By (3) and the definition of $G'$, $G'$ is connected and cubic. 
Since $G'$ is cubic, its vertex connectivity equals its edge connectivity, so it is enough to show $G'$ has no edge cut of size at most $2$.
Suppose to the contrary that $G'$ has an edge cut $F'$ with $|F'|\le2$.  
Let $F$ be the set of edges of $G$ having a lift in $F'$, so that $1\le|F|\le2$.
Clearly, $G'-F'$ contains $(G-F)\times K_2$ as a spanning subgraph.
As $G$ is $3$-edge-connected, $G-F$ is connected.
 
If $G-F$ is non-bipartite, then 
$(G-F)\times K_2$ is connected by (3) and hence so is $G'-F'$, a contradiction.
Suppose now that $G-F$ is bipartite, with parts $P$ and $Q$.
Then $(G-F)\times K_2$ has exactly two connected components, namely the subgraphs of $(G-F)\times K_2$ induced by $(P\times\{0\})\cup(Q\times\{1\})$ and by $(P\times\{1\})\cup(Q\times\{0\})$. 
We call them $C'_0$ and $C'_1$.
Each of $C'_0$ and $C'_1$ is isomorphic to $G-F$.
As $G$ is non-bipartite, Lemma~\ref{lem:parity}(1) rules out $|F|=1$, so $|F|=2$ and $F'$ consists of exactly one lift of each edge of $F$.
Again as $G$ is non-bipartite and $G-F$ is bipartite, some $e\in F$ has both ends in $P$ or both ends in $Q$, and then both lifts of $e$ join $C'_0$ to $C'_1$.
One of them lies outside $F'$, so $G'-F'$ is connected, a contradiction.
\end{proof}

\section{Hypergraphs and proof of Theorem~\ref{thm:detper}}\label{sec:hyper}
 
Subsection~\ref{sub:hyper} proves the main tool for our results (Theorem~\ref{thm:thomassen}).
Subsection~\ref{sec:proofmain} proves Theorem~\ref{thm:detper}.

\subsection{Non-$2$-colorable hypergraphs}\label{sub:hyper}
 
A hypergraph is \emph{$2$-colorable} (or \emph{bipartite}) if its vertices can be $2$-colored with no monochromatic hyperedge.
A \emph{subhypergraph} of a hypergraph $\HG$ is one obtained from $\HG$ by deleting hyperedges, and it is \emph{proper} if at least one hyperedge is deleted.
A hypergraph is \emph{minimally non-$2$-colorable} if it is not $2$-colorable but every proper subhypergraph of it is.
For a positive integer $r$, a hypergraph is \emph{$r$-uniform} if all hyperedges have the same cardinality $r$, and \emph{$r$-regular} if every vertex lies in exactly $r$ hyperedges.
An \emph{incidence matrix} of a hypergraph, the rows indexed by the vertices and the columns by the hyperedges, is a $(0,1)$-matrix that records which vertex lies in which hyperedge.
Its \emph{incidence graph} is the bipartite graph whose biadjacency matrix is the incidence matrix.
 
We shall use the following result of Henning and Yeo \cite{HY13}.
 
\begin{lemma}[{\cite[Corollary~1]{HY13}}]\label{lem:hy}
Every connected $3$-uniform $3$-regular hypergraph is either $2$-colorable, or becomes so on deleting any one of its hyperedges.
\end{lemma}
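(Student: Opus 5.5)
We deduce the statement from a counting inequality for minimally non-$2$-colorable hypergraphs (Seymour's theorem, which we reprove by linear algebra), combined with regularity and connectivity. Let $\HG$ be a connected $3$-uniform $3$-regular hypergraph that is not $2$-colorable. Since $\HG$ is not $2$-colorable, it contains a minimally non-$2$-colorable subhypergraph $\HG'$. Let $V'$ be the set of vertices lying in at least one hyperedge of $\HG'$. The goal is to show $\HG'=\HG$. Then $\HG$ is itself minimally non-$2$-colorable, which is exactly the claim that deleting any one hyperedge makes it $2$-colorable.

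\emph{Step 1 (Seymour's inequality).} We show that every minimally non-$2$-colorable hypergraph $\mathcal F$ with covered vertex set $W$ satisfies $|E(\mathcal F)|\ge |W|$. Suppose instead that $|E(\mathcal F)|<|W|$, and let $M$ be the $W\times E(\mathcal F)$ incidence matrix. Then $M$ has more rows than columns, so there is a nonzero $x\in\mathbb R^{W}$ with $x^{\mathsf T}M=0$. In other words, the entries of $x$ sum to zero on every hyperedge. Set $P=\{w:x_w>0\}$, $N=\{w:x_w<0\}$ and $Z=W\setminus(P\cup N)$. Every hyperedge meeting $P\cup N$ contains both a positive and a negative entry, so it meets both $P$ and $N$. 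Since $x\ne0$ and every vertex of $W$ is covered, some hyperedge meets $P\cup N$. Hence the hyperedges contained in $Z$ form a proper subhypergraph, which has a $2$-coloring by minimality. Now color $P$ red, $N$ blue, and $Z$ by that $2$-coloring. This is a $2$-coloring of $\mathcal F$, a contradiction.

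\emph{Step 2 (counting).} Apply Step 1 to $\HG'$. Each vertex lies in at most $3$ hyperedges of $\HG'$, and each hyperedge has size $3$. Counting incidences gives
\[3|E(\HG')|=\sum_{v\in V'}\deg_{\HG'}(v)\le 3|V'|\le 3|E(\HG')|.\]
So equality holds throughout, and every $v\in V'$ has all three of its hyperedges of $\HG$ inside $\HG'$.

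\emph{Step 3 (connectivity).} By Step 2, no hyperedge of $\HG$ outside $\HG'$ meets $V'$. Since $\HG$ is connected, it follows that $V'=V(\HG)$ and $\HG'=\HG$, which proves the lemma.

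The only real content is Step 1. The main subtlety there is checking that the hyperedges inside $Z$ form a \emph{proper} subhypergraph. This is why the vector $x$ is taken over the covered vertex set only, so that some hyperedge meets its support.
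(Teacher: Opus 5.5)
Your proof is correct and complete. The paper does not prove this lemma; it imports it from Henning and Yeo \cite{HY13}. So your proposal is a self-contained proof where the paper has only a citation, not a rival to an argument in the text.

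The three steps all hold up:
\begin{itemize}
\item In Step~1, the subhypergraph of hyperedges inside $Z$ is proper because $x\neq 0$ and every vertex of $W$ is covered. Recoloring $P$ and $N$ does not affect hyperedges contained in $Z$.
\item In Step~2, equality throughout forces $\deg_{\HG'}(v)=3=\deg_{\HG}(v)$ for every $v\in V'$.
\item In Step~3, $V'\neq\emptyset$, since a non-$2$-colorable hypergraph has a hyperedge. Connectivity then gives $V'=V(\HG)$. Because every hyperedge is nonempty, this yields $\HG'=\HG$.
\end{itemize}

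Your route has three features worth noting.
\begin{itemize}
\item It needs only Seymour's rank inequality $|E|\ge |W|$ for minimally non-$2$-colorable hypergraphs. It does not use the digraph characterization quoted as Theorem~\ref{thm:seymour}.
\item It is unaffected by repeated hyperedges. This matters, because the paper uses Lemma~\ref{lem:hy} in the proof of Theorem~\ref{thm:thomassen} exactly in the case where two hyperedges coincide, which Theorem~\ref{thm:seymour} does not cover.
\item Nothing in it depends on the number $3$. The same counting shows that every connected $k$-uniform $k$-regular hypergraph is either $2$-colorable or minimally non-$2$-colorable. For $k\ge 4$ this is weaker than the known fact that such hypergraphs are always $2$-colorable.
\end{itemize}
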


The results of Vazirani and Yannakakis \cite{VY} and of Seymour \cite{Seymour} are quoted below, and we record here why their statements are about det-extremality.
Let $M=(m_{ij})$ be a $(0,1)$-matrix of order $n$, and let $D(M)$ be the digraph on $\{1,\dots,n\}$ with an arc from $i$ to $j$ whenever $i\ne j$ and $m_{ij}=1$.
 
\begin{lemma}[{\cite[Lemma~2.2]{VY}}]\label{lem:evencycle}
Let $M$ be a $(0,1)$-matrix of order $n$ all of whose diagonal entries are $1$.
Then $\det M=\per M$ if and only if $D(M)$ has no directed cycle of even length.
\end{lemma}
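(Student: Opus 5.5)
We prove the two directions separately, working throughout with the permutation expansion. Both $\det M$ and $\per M$ are sums over permutations $\sigma$ of $\{1,\dots,n\}$ with $m_{i\sigma(i)}=1$ for all $i$. The permanent counts each such $\sigma$ with weight $+1$, while the determinant weights it by $\operatorname{sgn}\sigma$. Hence $\det M=\per M$ holds if and only if every such permutation is even. So the whole task is to relate \emph{even permutations supported on $M$} to \emph{even directed cycles in $D(M)$}.

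Decompose a supported permutation $\sigma$ into disjoint cycles. Fixed points are always allowed, because all diagonal entries are $1$. Every nontrivial cycle $(i_1\,i_2\,\dots\,i_k)$ with $k\ge2$ uses the entries $m_{i_1i_2},\dots,m_{i_ki_1}$, so it corresponds to a directed cycle of length $k$ in $D(M)$; a $2$-cycle gives a directed $2$-cycle (a pair of opposite arcs). Conversely, any collection of vertex-disjoint directed cycles in $D(M)$, completed by fixed points on the remaining indices, gives a supported permutation. We also use that $\operatorname{sgn}\sigma=\prod(-1)^{k-1}$ over the cycles of $\sigma$, so $\sigma$ is odd exactly when it has an odd number of cycles of even length.

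\textbf{($\Leftarrow$)} Suppose $D(M)$ has no directed cycle of even length. Then every supported $\sigma$ has only odd-length cycles and fixed points. Each factor $(-1)^{k-1}$ is then $+1$, so $\sigma$ is even. Therefore $\det M=\per M$.

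\textbf{($\Rightarrow$)} Suppose $D(M)$ contains a directed cycle $C$ of even length $k$. Take $\sigma$ to be the $k$-cycle along $C$ together with fixed points on all other indices. This $\sigma$ is supported precisely because the diagonal is all ones, and it is odd. It therefore contributes $+1$ to $\per M$ and $-1$ to $\det M$. All other terms satisfy $\operatorname{sgn}\sigma\le 1$, so $\det M\le \per M-2<\per M$. This proves the equivalence.

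The only point needing care is the forward direction: we must exhibit a single odd term, and not argue about cancellation. The all-ones diagonal is exactly what lets one even cycle be extended to a full permutation. Without that hypothesis the statement fails, so this is the step where the assumption is essential.
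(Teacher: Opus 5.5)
Your proof is correct and complete. Since $\per M-\det M$ is twice the number of odd permutations supported on $M$, and the all-ones diagonal lets any single even directed cycle of $D(M)$ be completed by fixed points to an odd supported permutation, both directions follow as you say (this includes a $2$-cycle, which the paper also counts as a directed cycle in the proof of Theorem~\ref{thm:thomassen}). The paper does not prove this lemma but quotes it from Vazirani and Yannakakis, and your permutation-expansion argument is the standard proof of that result.
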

 
We call an incidence matrix all of whose diagonal entries are $1$ a \emph{diagonal-incidence} matrix.
 
\begin{proposition}\label{prop:three}
Let $\HG$ be a $3$-uniform $3$-regular hypergraph.
Then $\HG$ has a diagonal-incidence matrix $M$ and for any such $M$ the following three conditions are equivalent.
\begin{itemize}
\item[\rm(1)] The incidence graph of $\HG$ is det-extremal.
\item[\rm(2)] $|\det M|=\per M$.
\item[\rm(3)] $D(M)$ has no directed cycle of even length.
\end{itemize}
\end{proposition}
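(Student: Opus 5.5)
The proof has two parts: first the existence of a diagonal-incidence matrix, then the three equivalences.

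\emph{Existence.} Let $\HG$ have $n$ vertices. Since $\HG$ is $3$-uniform and $3$-regular, double counting incidences gives $3n=3|E(\HG)|$, so $\HG$ has exactly $n$ hyperedges. Its incidence graph is therefore a cubic bipartite graph with parts of equal size $n$. Such a graph has a perfect matching, by Hall's theorem or by K\H{o}nig's theorem that a regular bipartite multigraph has one; this holds even when $\HG$ has multiple hyperedges. A perfect matching pairs each vertex $v_i$ with a hyperedge $e_i\ni v_i$. Indexing the rows by $v_1,\dots,v_n$ and the columns by $e_1,\dots,e_n$ gives an incidence matrix $M$ with all diagonal entries equal to $1$.

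\emph{(1)$\Leftrightarrow$(2).} Fix any diagonal-incidence matrix $M$; by definition it is a biadjacency matrix of the incidence graph. The Preliminaries show that for a bipartite graph with parts of equal size and biadjacency matrix $B$, we have $|\det A|=\per A$ if and only if $|\det B|=\per B$. Taking $B=M$ gives the equivalence of (1) and (2) at once.

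\emph{(2)$\Leftrightarrow$(3).} Lemma~\ref{lem:evencycle} says $\det M=\per M$ if and only if $D(M)$ has no even directed cycle, but (2) involves $|\det M|$, so we must rule out $\det M=-\per M$ when $\per M>0$. The diagonal of $M$ gives the identity permutation, whose sign is $+1$, so at least one term of $\det M$ is positive. Suppose $\det M=-\per M$. Every permutation $\sigma$ with $\prod_i m_{i\sigma(i)}=1$ would then have to contribute $-1$, which contradicts the contribution of $+1$ from the identity. Hence $|\det M|=\per M$ forces $\det M=\per M$, and the converse is trivial. Lemma~\ref{lem:evencycle} then finishes the proof.

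The only points needing care are the equal part sizes, which make $M$ square, and the existence of the perfect matching. Both follow from regularity, so I expect no real obstacle.
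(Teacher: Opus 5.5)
Your proof is correct and follows essentially the same route as the paper. You get a diagonal-incidence matrix from a perfect matching of the cubic bipartite incidence graph, obtain (1)$\Leftrightarrow$(2) because $M$ is a biadjacency matrix, and obtain (2)$\Leftrightarrow$(3) by noting that the identity permutation contributes $+1$, so $|\det M|=\per M$ forces $\det M=\per M$ before Lemma~\ref{lem:evencycle} applies.
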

 
\begin{proof}
A $3$-uniform $3$-regular hypergraph has as many hyperedges as vertices, so an incidence matrix of $\HG$ is square and its incidence graph $G$ is cubic bipartite.
Then $G$ has a perfect matching by Hall's theorem.
Listing the vertices and hyperedges of $\HG$ along this perfect matching puts the matching on the diagonal.
So $\HG$ has a diagonal-incidence matrix, and we let $M$ be one.
The matrix $M$ is a biadjacency matrix of the incidence graph of $\HG$, so (1)$\Leftrightarrow$(2).
For a $(0,1)$-matrix the nonzero terms in the expansion of $\det M$ are the signs of the permutations counted by $\per M$, and the identity permutation has sign $+1$.
So $|\det M|=\per M$ forces $\det M=\per M$, and (2)$\Leftrightarrow$(3) by Lemma~\ref{lem:evencycle}.
\end{proof}
 
By Proposition~\ref{prop:three}, a $3$-uniform $3$-regular hypergraph can be read in three languages, those of graphs, of matrices, and of digraphs.
We now state Seymour's result in the form in which we use it.
A digraph is \emph{strong} (or \emph{strongly connected}) if for every two vertices $u$ and $v$ there is a directed path from $u$ to $v$.
 
\begin{theorem}[{\cite[Corollary to Proposition~3]{Seymour}}]\label{thm:seymour}
Let $\HG$ be a hypergraph with pairwise distinct hyperedges and as many hyperedges as vertices, and let $M$ be a diagonal-incidence matrix of $\HG$. 
Then $\HG$ is minimally non-$2$-colorable if and only if $D(M)$ is strong and has no directed cycle of even length.
\end{theorem}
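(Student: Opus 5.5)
We plan to prove Theorem~\ref{thm:seymour} through the theory of sign-nonsingular matrices. Write $n$ for the common number of vertices and hyperedges, and $M$ for the $n\times n$ diagonal-incidence matrix. The dictionary we use is this. A $\pm1$ vector $s$ indexed by $V(\HG)$ is a proper $2$-coloring of a set $F$ of hyperedges exactly when, for each $f\in F$, the column $f$ of $\operatorname{diag}(s)M$ has entries of both signs. Accordingly, $\HG$ is non-$2$-colorable if and only if every $\operatorname{diag}(s)M$ with $s\in\{\pm1\}^n$ has a unisigned column. This is a weak form of the $L$-matrix property. Since $M$ is a $(0,1)$-matrix, a general nonzero diagonal signing $D$ reduces to a $\pm1$ signing: extend $D$ arbitrarily on its zero positions, which can only shrink supports. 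We will use the classical fact (Klee--Ladner--Manber, Brualdi--Shader) that a square matrix is an $L$-matrix if and only if it is sign-nonsingular. By Lemma~\ref{lem:evencycle}, applied to all positive reweightings of $M$, sign-nonsingularity of a matrix with unit diagonal is equivalent to $D(M)$ having no directed cycle of even length.

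($\Leftarrow$) Assume $D(M)$ is strong and has no even directed cycle. Then $M$ is sign-nonsingular, hence an $L$-matrix, so some column of $\operatorname{diag}(s)M$ is unisigned for every $s$, and $\HG$ is not $2$-colorable. For minimality, fix a hyperedge $e$ and let $x^{\mathsf T}$ be row $e$ of $M^{-1}$, so that $x^{\mathsf T}M=e_e^{\mathsf T}$. The entries of $M^{-1}$ are cofactors. Strong connectivity means that $M$ is fully indecomposable, since its diagonal is positive. For a fully indecomposable sign-nonsingular matrix, every cofactor is nonzero, because a path in $D(M)$ together with the diagonal yields a nonvanishing term with no cancellation. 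Hence $x$ has no zero entry. Set $s=\operatorname{sign}(x)$. For every $f\neq e$, the column $f$ of $M$ is nonzero and satisfies $\sum_i x_i m_{if}=0$, so it meets both signs of $s$. Thus $s$ properly $2$-colors $\HG-e$.

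($\Rightarrow$) Assume $\HG$ is minimally non-$2$-colorable. Non-$2$-colorability gives the $L$-property of $M$ for all $\pm1$ signings, hence for all diagonal signings. Therefore $M$ is sign-nonsingular, and $D(M)$ has no even directed cycle. It remains to prove that $D(M)$ is strong, and we expect this to be the main obstacle. If $D(M)$ is not strong, there is a nonempty proper index set $S$ with no arcs leaving it. Then $M$ is block triangular, and we may arrange that the hyperedges in $S$ contain only vertices in $S$. The principal block $M[S]$ is again sign-nonsingular, hence an $L$-matrix. It follows that the subhypergraph on the hyperedges of $S$, restricted to the vertices of $S$, is already non-$2$-colorable. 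Because these hyperedges lie entirely inside $S$, the hypergraph $\HG$ minus any hyperedge outside $S$ would still be non-$2$-colorable. This contradicts minimality, provided the complement of $S$ contains a hyperedge, which it does since $S$ is proper and the matrix is square. The point needing the most care is the closure direction. We must check that, after possibly passing to the complementary closed set, as the transpose condition allows, the block that is closed is the one whose hyperedges avoid the outside vertices. We would handle this by choosing $S$ to be a terminal strong component, or an initial one if the orientation is reversed. The distinctness of hyperedges is used only to rule out degenerate repeated columns in this step.
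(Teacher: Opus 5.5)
The paper gives no proof of this statement; it quotes it from Seymour, so there is no in-paper argument to compare with. Judged on its own, your sign-nonsingular route can work, but two steps fail as written.

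\textbf{First gap, in ($\Rightarrow$).} The reduction from strict to arbitrary signings is invalid. Suppose $D$ vanishes on $Z$ and you extend it to a strict signing $s$. The unisigned column of $\operatorname{diag}(s)M$ may belong to a hyperedge contained in $Z$, and then that column of $DM$ is zero, hence balanced. Non-$2$-colorability alone really does not give sign-nonsingularity. Take the disjoint union of a triangle and a $4$-cycle, viewed as $2$-uniform hypergraphs. It satisfies all hypotheses and is not $2$-colorable, yet its diagonal-incidence matrix is singular (alternating signs on the $4$-cycle give a left null vector), and $D(M)$ contains a directed $4$-cycle.

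You must use minimality at exactly this point. Suppose a nonzero signing $D$ with support $P$ made every column of $DM$ balanced. Then every hyperedge meeting $P$ is already bichromatic under $D$. The hyperedges disjoint from $P$ form a proper subhypergraph, since the hyperedge matched to a vertex of $P$ meets $P$. By minimality that subhypergraph is $2$-colorable, and the two colorings together $2$-color $\HG$, a contradiction. This is Seymour's key lemma. Once it is in place, the rest of ($\Rightarrow$), including your block-triangular argument for strong connectivity, is fine.

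\textbf{Second gap, in ($\Leftarrow$).} The claim that every cofactor of a fully indecomposable sign-nonsingular matrix is nonzero is false. Full indecomposability does give each $\det M(i|e)$ a nonzero term. But sign-nonsingularity of $M$ forces those terms to share a sign only when $m_{ie}\neq 0$. When $m_{ie}=0$, the terms correspond to directed paths from $e$ to $i$ in $D(M)$, completed by odd cycles and fixed points. Their sign is determined by the parity of the path, and both parities can occur.

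Concretely, take the hyperedges $\{1,5\},\{1,2,3\},\{1,3,4\},\{2,4\},\{3,5\}$, the $j$th one matched to vertex $j$. Then $D(M)$ has the Hamiltonian cycle $1\to2\to4\to3\to5\to1$, and its only other cycles are $1\to3\to5\to1$ and $3\to2\to4\to3$. So $D(M)$ is strong with no even cycle. But $x^{\mathsf T}M=e_1^{\mathsf T}$ forces $x=(\tfrac12,0,-\tfrac12,0,\tfrac12)$. Then $\operatorname{sign}(x)$ leaves the hyperedge $\{2,4\}$ entirely uncolored, so your conclusion ``it meets both signs of $s$'' fails.

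The repair is to run your argument with a generic positive matrix $W$ having the zero pattern of $M$. By full indecomposability, each cofactor of $W$ is a nonzero polynomial in the entries, since distinct transversals give distinct monomials. Hence $W^{-1}$ has no zero entry for generic $W$. Then $x^{\mathsf T}W=e_e^{\mathsf T}$ yields a strict signing $\operatorname{sign}(x)$ that properly colors $\HG-e$. Note that this minimality step uses only strong connectivity, not the absence of even cycles.
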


In what follows, we use Theorem~\ref{thm:thomassen} rather than Theorem~\ref{thm:seymour} itself, since the hypergraphs used to prove our main theorems may have multiple hyperedges, whereas Theorem~\ref{thm:seymour} and the classical statement for configurations (see Section~\ref{sec:B:configuration}) require the hyperedges to be pairwise distinct.

\begin{theorem}\label{thm:thomassen}
A connected $3$-uniform $3$-regular hypergraph is non-$2$-colorable if and only if its incidence graph is det-extremal.
\end{theorem}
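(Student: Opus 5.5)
Let $\HG$ be a connected $3$-uniform $3$-regular hypergraph, possibly with repeated hyperedges, and let $M$ be a diagonal-incidence matrix of $\HG$, which exists by Proposition~\ref{prop:three}. By that proposition, the incidence graph of $\HG$ is det-extremal if and only if $D(M)$ has no directed cycle of even length. So it suffices to show that $\HG$ is non-$2$-colorable if and only if $D(M)$ has no even directed cycle. The plan is to reduce to Theorem~\ref{thm:seymour}, using Lemma~\ref{lem:hy} to pass between ``non-$2$-colorable'' and ``minimally non-$2$-colorable'', and to deal with repeated hyperedges separately.

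\emph{Step 1: strong connectivity.} I would first show that $D(M)$ is always strong. Because $\HG$ is connected, its incidence graph $G$ is connected, cubic and bipartite. Orient every matching edge (on the diagonal of $M$) from the hyperedge to the vertex, and every other edge from the vertex to the hyperedge. Contracting the matching edges then turns this orientation of $G$ into $D(M)$, so $D(M)$ is strong if and only if this orientation is strong. In the oriented graph every vertex of $\HG$ has out-degree $2$ and in-degree $1$, and every hyperedge has in-degree $2$ and out-degree $1$. Hence each of the two sides has total in-degree equal to total out-degree. For any set $S$ closed under out-arcs, summing over $S$ shows that the number of arcs leaving $S$ equals the number entering $S$; as none leave, none enter. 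Since $G$ is connected, this forces $S$ to be everything, so the orientation, and hence $D(M)$, is strong.

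\emph{Step 2: the case of distinct hyperedges.} Suppose the hyperedges are pairwise distinct. By Lemma~\ref{lem:hy}, $\HG$ is non-$2$-colorable if and only if it is minimally non-$2$-colorable. By Theorem~\ref{thm:seymour} together with Step~1, the latter holds if and only if $D(M)$ has no even directed cycle. This settles this case.

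\emph{Step 3: repeated hyperedges.} This is the main obstacle, since Theorem~\ref{thm:seymour} is not available here. Suppose two hyperedges coincide, both equal to $\{a,b,c\}$. I would show that both sides of the equivalence fail.

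On the coloring side, a hyperedge of size $3$ is only constraining when it is monochromatic, so duplicating it adds no constraint. Deleting one copy leaves the same set system. If that set system were not $2$-colorable, $\HG$ would not be minimally non-$2$-colorable, which contradicts Lemma~\ref{lem:hy}, provided $\HG$ minus that copy is still relevant. Applying Lemma~\ref{lem:hy} directly, $\HG$ minus one copy is $2$-colorable, and any such coloring is a proper coloring of $\HG$. So $\HG$ is $2$-colorable.

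On the det-extremal side, the two identical columns of $M$ give $\det M=0$, while $\per M>0$ by Hall's theorem. Hence the incidence graph is not det-extremal. (If the two copies share exactly two vertices, the incidence graph even contains a $4$-cycle, but the identical-column argument handles every case.)

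So in the repeated case both statements are false, and the equivalence holds. Combining Steps~2 and~3 proves the theorem. The delicate point is to check that Lemma~\ref{lem:hy} as quoted is valid for hypergraphs with multiple hyperedges. If it is not, I would fall back to the following: the incidence graph is non-det-extremal, because it has a $4$-cycle when the two copies share two vertices, or identical columns when they share all three; and for the coloring side I would argue directly through the reduced hypergraph.
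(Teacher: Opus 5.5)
Your proof follows the paper's argument. You split on whether two hyperedges coincide. In the repeated case you show that both sides fail: Lemma~\ref{lem:hy} gives a $2$-coloring of $\HG-e$, which still colors the surviving twin $f$, and the two equal columns of $M$ rule out det-extremality. In the simple case you combine Lemma~\ref{lem:hy} (non-$2$-colorable $\Leftrightarrow$ minimally non-$2$-colorable) with Theorem~\ref{thm:seymour} and Proposition~\ref{prop:three}.

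Using $\det M=0$ with $\per M>0$, instead of the directed $2$-cycle the paper exhibits, is an equally valid variant. Your worry about Lemma~\ref{lem:hy} for hypergraphs with repeated hyperedges is moot: the paper's hypergraphs explicitly allow multiple hyperedges, and it applies the lemma in exactly this situation. Your remark about two copies sharing ``exactly two vertices'' does not apply, since equal hyperedges share all three vertices.

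The step that fails as written is Step~1. In your orientation of the incidence graph, a vertex of $\HG$ has out-degree $2$ and in-degree $1$, and a hyperedge has in-degree $2$ and out-degree $1$. So this digraph is not balanced. Summing over a set $S$ gives
$\sum_{x\in S}(\deg^+(x)-\deg^-(x)) = |S\cap V(\HG)|-|S\cap E(\HG)|$, which need not be $0$. Hence ``arcs leaving $S$ equal arcs entering $S$'' does not follow from the degree count.

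The conclusion is true and the repair is short. If $S$ is closed under out-arcs, each hyperedge $j\in S$ forces its matched vertex $j$ into $S$. This gives $|S\cap E(\HG)|\le|S\cap V(\HG)|$, and hence $0\le |S\cap V(\HG)|-|S\cap E(\HG)| = -(\text{number of arcs entering } S)$, so no arc enters $S$.

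Simpler is the paper's route: run your balancing argument directly in $D(M)$. There every node has in-degree and out-degree $2$, because each row and column of $M$ has three $1$'s including the diagonal one. Weak connectivity of $D(M)$ comes from connectivity of $\HG$. With either fix, your proof is complete and coincides with the paper's.
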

 
\begin{proof}
Let $\HG$ be a connected $3$-uniform $3$-regular hypergraph, and let $M$ be a diagonal-incidence matrix of $\HG$, which exists by Proposition~\ref{prop:three}.
 
First, suppose that two hyperedges $e$ and $f$ of $\HG$ are equal as sets.
By Lemma~\ref{lem:hy}, either $\HG$ is $2$-colorable or $\HG-e$ is, and in the latter case the hyperedge $f$ remains in $\HG-e$, so a $2$-coloring of $\HG-e$ is one of $\HG$.
In either case, $\HG$ is $2$-colorable.
The two columns of $M$ corresponding to $e$ and $f$ are equal, which with the diagonal entries $1$ gives a directed cycle of $D(M)$ of length $2$, so the incidence graph is not det-extremal by Proposition~\ref{prop:three}.
Both sides of the equivalence fail, so the equivalence holds in this case.
 
Now assume that the hyperedges of $\HG$ are pairwise distinct.
Since $\HG$ is $3$-uniform and $3$-regular, the hyperedges are as many as the vertices of $\HG$.
Thus we can apply Theorem~\ref{thm:seymour} to $M$.
By Proposition~\ref{prop:three} it suffices to prove that $\HG$ is non-$2$-colorable if and only if $D(M)$ has no directed cycle of even length.
 
Suppose that $\HG$ is not $2$-colorable.
By Lemma~\ref{lem:hy}, the hypergraph $\HG-e$ is $2$-colorable for every hyperedge $e$, and every proper subhypergraph of $\HG$ is contained in some $\HG-e$.
Thus $\HG$ is minimally non-$2$-colorable.
By Theorem~\ref{thm:seymour}, $D(M)$ has no directed cycle of even length.
Conversely, suppose that $D(M)$ has no directed cycle of even length.
We check that $D(M)$ is strong.
Each row or column of $M$ has three entries equal to $1$, as $\HG$ is $3$-uniform and $3$-regular.
The diagonal entries being $1$, every vertex of $D(M)$ therefore has in-degree and out-degree $2$, and so $D(M)$ is a union of Eulerian digraphs.
The $j$th column of $M$ consists of $j$ together with the in-neighbors of $j$ in $D(M)$, so the hyperedges of $\HG$ are the sets $\{j\}\cup N^{-}_{D(M)}(j)$ and the underlying graph of $D(M)$ has the same number of connected components as $\HG$.
Since $\HG$ is connected, it follows that $D(M)$ is strongly connected.
By Theorem~\ref{thm:seymour}, $\HG$ is minimally non-$2$-colorable.
In particular $\HG$ is not $2$-colorable. 
\end{proof}
 
In Theorem~\ref{thm:thomassen}, connectedness is essential.
If $\HG$ is the disjoint union of the Fano plane (the projective plane of order $2$, whose incidence graph is the Heawood graph $\HH$) and the hypergraph on three points whose three hyperedges all equal the point set, then $\HG$ is a $3$-uniform $3$-regular hypergraph that is non-$2$-colorable, but its incidence graph is not det-extremal. 
Theorem~\ref{thm:thomassen} will be applied to connected hypergraphs only, and the following lemma handles the disconnected case.
 
The \emph{dual} $\HG^{*}$ of a hypergraph $\HG$ is obtained by interchanging the roles of the vertices and hyperedges.
The incidence graph of $\HG^{*}$ is that of $\HG$ with the two sides of the bipartition interchanged, hence the same graph.
 
\begin{lemma}\label{lem:disconnected}
Let $\HG$ be a $3$-uniform $3$-regular hypergraph whose connected components are $\HG_1$ and $\HG_2$ with $\HG_2\cong \HG_1^{*}$. 
Then $\HG$ is non-$2$-colorable if and only if its incidence graph is det-extremal.  
\end{lemma}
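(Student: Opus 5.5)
Since $\HG$ has two connected components, a $2$-coloring of $\HG$ is a pair of $2$-colorings, one of $\HG_1$ and one of $\HG_2$. Hence $\HG$ is non-$2$-colorable if and only if at least one of $\HG_1$, $\HG_2$ is non-$2$-colorable. On the matrix side, the incidence graph $G$ of $\HG$ is the disjoint union of the incidence graphs $G_1$ of $\HG_1$ and $G_2$ of $\HG_2$. Each $G_i$ is cubic bipartite, so it has a perfect matching by Hall's theorem, as in Proposition~\ref{prop:three}. By Lemma~\ref{lem:coverdetM}, applied to the components of $G_1$ and $G_2$, $G$ is det-extremal if and only if both $G_1$ and $G_2$ are det-extremal. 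The mismatch between ``at least one'' and ``both'' is exactly what the duality hypothesis resolves.

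The incidence graph of $\HG_1^{*}$ is the same graph as that of $\HG_1$ with the sides swapped. So $G_2\cong G_1$, and $G_1$ is det-extremal if and only if $G_2$ is. It follows that $G$ is det-extremal if and only if $G_1$ is.

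Each $\HG_i$ is a connected $3$-uniform $3$-regular hypergraph: connectedness holds because it is a component, and uniformity and regularity are inherited from $\HG$. Theorem~\ref{thm:thomassen} therefore applies to each $\HG_i$ and gives that $\HG_i$ is non-$2$-colorable if and only if $G_i$ is det-extremal. Since $G_1\cong G_2$, the two hypergraphs $\HG_1$ and $\HG_2$ are either both non-$2$-colorable or both $2$-colorable.

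Putting these together: if $\HG$ is non-$2$-colorable, then some $\HG_i$ is non-$2$-colorable. Then $G_i$ is det-extremal, hence $G_1$ is det-extremal, hence $G$ is. Conversely, if $G$ is det-extremal, then $G_1$ is det-extremal, so $\HG_1$ is non-$2$-colorable, and therefore $\HG$ is non-$2$-colorable.

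The only delicate points are that Theorem~\ref{thm:thomassen} is applied to connected hypergraphs only, which is why we pass to the components, and that $G_1$ need not be connected as a graph. Lemma~\ref{lem:coverdetM} handles the latter, since $G_1$ has a perfect matching.
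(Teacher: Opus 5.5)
Your proof is correct and is essentially the paper's argument: Theorem~\ref{thm:thomassen} is applied to each component. The isomorphism $G_1\cong G_2$, which comes from $\HG_2\cong\HG_1^{*}$, reconciles ``at least one component is non-$2$-colorable'' with ``both $G_i$ are det-extremal''. Lemma~\ref{lem:coverdetM} then handles the disjoint union.

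Your closing caveat that $G_1$ might be disconnected is unnecessary, though harmless. Since $\HG_i$ is connected and has no isolated vertices or empty hyperedges, its incidence graph $G_i$ is connected, which is why the paper can say directly that $G$ has exactly two components.
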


\begin{proof}
An incidence graph $G$ of $\HG$ has exactly two connected components $G_1$ and $G_2$, where $G_i$ is an incidence graph of $\HG_i$ for each $i$.
Note that each $\HG_i$ is connected, $3$-uniform, and $3$-regular.
Moreover, since a connected $3$-uniform $3$-regular hypergraph and its dual have the same incidence graph, $G_1$ is isomorphic to $G_2$. 
By Theorem~\ref{thm:thomassen}, the following equivalence holds.
\begin{align*}
 \text{$\HG_1$ is non-$2$-colorable} \Leftrightarrow G_1\text{ is det-extremal} 
 \Leftrightarrow G_2\text{ is det-extremal} 
 \Leftrightarrow \text{$\HG_2$ is non-$2$-colorable}. 
\end{align*}
Thus, $\HG$ is non-$2$-colorable if and only if $\HG_i$ is non-$2$-colorable for each $i$ if and only if $G_i$ is det-extremal for each $i$.
Since $G$ is cubic bipartite and so has a perfect matching, it follows from Lemma~\ref{lem:coverdetM} that $G$ is det-extremal if and only if $G_i$ is det-extremal for each $i$.
Therefore the statement holds.
\end{proof}
 
\subsection{Proof of Theorem~\ref{thm:detper}}\label{sec:proofmain}
 
For a graph $G$ with $\delta(G)\ge 1$, the \emph{open neighborhood hypergraph} of $G$, denoted $\ONH(G)$, is the hypergraph with vertex set $V(G)$ whose hyperedges consist of the open neighborhoods $N_G(v)$ of the vertices $v$ of $G$, where multiple hyperedges are possible.
If $G$ is $r$-regular, then $\ONH(G)$ is an $r$-uniform $r$-regular hypergraph.
It follows immediately from the definitions that for a graph $G$ with $\delta(G)\ge 1$, $d_t(G)\ge2$ if and only if $\ONH(G)$ is $2$-colorable. 
 
\begin{observation}\label{prop:levi}
Let $G$ be a connected cubic graph with adjacency matrix $A$.
Then the following hold.
\begin{itemize}
\item[\rm (1)] With the hyperedge $N_G(v)$ indexed by $v$ for each $v\in V(G)$, an incidence matrix of $\ONH(G)$ is $A$, so the incidence graph of $\ONH(G)$ is the bipartite double cover $G\times K_2$.
\item[\rm (2)] If $G$ is bipartite, then $\ONH(G)$ has exactly two connected components and they are dual to each other.
\end{itemize}
\end{observation}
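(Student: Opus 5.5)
For (1), I will write out the incidence relation directly. Index the vertices of $\ONH(G)$ by the rows and the hyperedges by the columns, with column $v$ recording the hyperedge $N_G(v)$. Then the $(u,v)$ entry of the incidence matrix is $1$ exactly when $u\in N_G(v)$, that is, when $uv\in E(G)$. So the incidence matrix is precisely $A$. By the definition of the incidence graph, this graph is the bipartite graph whose biadjacency matrix is $A$, and by the equivalent description in Subsection~\ref{sub:doublecover} that graph is $G\times K_2$. There is one subtlety to mention: hyperedges may coincide as sets, since distinct vertices can have equal neighborhoods. Because hyperedges are indexed by $v$ as a multiset, each still gets its own column, and the statement remains correct.

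For (2), let $G$ be connected and bipartite with parts $X$ and $Y$. Every hyperedge $N_G(v)$ lies entirely in $Y$ if $v\in X$, and entirely in $X$ if $v\in Y$. So $\ONH(G)$ splits into two subhypergraphs: $\HG_X$ has vertex set $X$ and hyperedges $\{N_G(y):y\in Y\}$, and $\HG_Y$ has vertex set $Y$ and hyperedges $\{N_G(x):x\in X\}$. No hyperedge meets both $X$ and $Y$, so there are at least two components. To see that each of $\HG_X$ and $\HG_Y$ is connected, take $x,x'\in X$ and a path $x=x_0,y_1,x_1,\dots,y_k,x_k=x'$ in $G$. Consecutive vertices $x_{i-1},x_i$ both lie in the hyperedge $N_G(y_i)$, so $x$ and $x'$ are joined in $\HG_X$; the argument for $\HG_Y$ is symmetric. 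Hence there are exactly two components.

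Duality comes from the map sending the vertex $x\in X$ of $\HG_X$ to the hyperedge $N_G(x)$ of $\HG_Y$, and the hyperedge $N_G(y)$ of $\HG_X$ to the vertex $y$ of $\HG_Y$. Incidence is preserved because $x\in N_G(y)$ if and only if $y\in N_G(x)$. So $\HG_Y\cong\HG_X^{*}$.

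None of these steps is a real obstacle. The only care needed is with repeated hyperedges and with checking that each part is a single component, and connectivity of $G$ handles the latter.
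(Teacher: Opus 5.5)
Your proposal is correct and takes the same approach as the paper, which records both parts as immediate from the definitions; you unpack exactly those definitions: the $(u,v)$ entry of the incidence matrix is $1$ iff $uv\in E(G)$, the hyperedges $N_G(y)$ with $y\in Y$ lie in $X$ and vice versa, and the symmetry $x\in N_G(y)\Leftrightarrow y\in N_G(x)$ gives the duality. Indexing the hyperedges by vertices to handle repeated neighborhoods and using the alternating-path argument to show each part is connected are the right details to check.
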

 
Observation~\ref{prop:levi}(1) and (2) are immediate from the definitions.
Now we are ready to give a proof of Theorem~\ref{thm:detper}.
 
\begin{proof}[Proof of Theorem~\ref{thm:detper}]
Let $G$ be a connected cubic graph.
Then $\ONH(G)$ is a $3$-uniform $3$-regular hypergraph whose incidence graph is $G\times K_2$ by Observation~\ref{prop:levi}(1).
If $G$ is non-bipartite, then $\ONH(G)$ is connected by Lemma~\ref{lem:coverprop}(3) and Observation~\ref{prop:levi}(1), and so we apply Theorem~\ref{thm:thomassen} to $\ONH(G)$.
If $G$ is bipartite, then $\ONH(G)$ has two connected components dual to each other by Observation~\ref{prop:levi}(2), and so Lemma~\ref{lem:disconnected} applies.
Either way,
\begin{align*}
 \text{$G$ is det-extremal} & \Leftrightarrow \text{$G\times K_2$ is det-extremal}
 & \text{by Lemma~\ref{lem:coverprop}(1),} \\
 & \Leftrightarrow \text{$\ONH(G)$ is not $2$-colorable}
 & \text{by Theorem~\ref{thm:thomassen} and Lemma~\ref{lem:disconnected},}\\
 & \Leftrightarrow  d_t(G)=1
 & \text{by the definitions}.
\end{align*}
\end{proof}

A similar approach appears in~\cite{RRT26}, where Rotenberg, Rutschmann, and Thomassen pass to $G\times K_2$ and apply theorems of McCuaig~\cite{McCuaig,McCuaigPolya} in their study of planar graphs, although the connection to det-extremality is not stated there.
Our proof instead relies on Theorem~\ref{thm:thomassen}, which is based on Seymour's theorem~\cite{Seymour} and holds for all connected 3-uniform 3-regular hypergraphs.

The following is a new proof of Theorem~\ref{AAKSZ-thm}, which now comes as a corollary of Theorem~\ref{thm:detper}.
 
\begin{corollary}\label{cor:4cycle}
Every connected cubic graph $G$ containing a $4$-cycle has $d_t(G)\ge2$. 
\end{corollary}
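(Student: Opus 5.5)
By Theorem~\ref{thm:detper}, it suffices to show that a connected cubic graph $G$ containing a $4$-cycle is not det-extremal. The natural route passes to the bipartite double cover. By Lemma~\ref{lem:coverprop}(1), $G$ is det-extremal if and only if $G\times K_2$ is. So the plan is to show that $G\times K_2$ contains a $4$-cycle and then apply Observation~\ref{lem:noc4}.

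The key steps are as follows.

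\textbf{Step 1 (lifting the cycle).} Let $v_1v_2v_3v_4v_1$ be a $4$-cycle in $G$. Then
\[
(v_1,0)(v_2,1)(v_3,0)(v_4,1)(v_1,0)
\]
is a closed walk in $G\times K_2$. Its four vertices are distinct, because the $v_i$ are distinct. Hence it is a $4$-cycle, since an even cycle lifts to two disjoint copies.

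\textbf{Step 2 (regularity).} $G\times K_2$ is cubic and bipartite, since each $(u,i)$ has exactly the three neighbours $(w,1-i)$ with $w\in N_G(u)$.

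\textbf{Step 3 (conclusion).} Observation~\ref{lem:noc4} is stated for cubic bipartite graphs and does not require connectivity: its proof only uses the centrality of the $4$-cycle (via Hall's theorem) and Lemma~\ref{thm:central4}. If one worries about connectivity, note two facts. When $G$ is non-bipartite, $G\times K_2$ is connected by Lemma~\ref{lem:coverprop}(3). When $G$ is bipartite, $G\times K_2$ consists of two copies of $G$, and we may instead apply Observation~\ref{lem:noc4} directly to $G$ itself, since $G$ is then a cubic bipartite graph with a $4$-cycle. Either way, $G\times K_2$ (or $G$) is not det-extremal. Therefore $G$ is not det-extremal, and Theorem~\ref{thm:detper} gives $d_t(G)\ge2$.

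No step is a real obstacle. The only point needing care is that Observation~\ref{lem:noc4} applies to the double cover; this rests on the $4$-cycle being central in a cubic bipartite graph, which is exactly the Hall argument recorded before Lemma~\ref{thm:central4}. That argument works for any cubic bipartite graph, connected or not.
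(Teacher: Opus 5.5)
Your proposal is correct and is essentially the paper's proof: lift the $4$-cycle to the cubic bipartite graph $G\times K_2$, conclude from Observation~\ref{lem:noc4} that $G\times K_2$ is not det-extremal, transfer this to $G$ by Lemma~\ref{lem:coverprop}(1), and finish with Theorem~\ref{thm:detper}. The case split on connectivity in your Step~3 is harmless but unnecessary, since, as you yourself note, the Hall argument that makes a $4$-cycle central works in any cubic bipartite graph, connected or not.
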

 
\begin{proof}
Let $uvwxu$ be a $4$-cycle of $G$.
It lifts to the $4$-cycle $(u,0)(v,1)(w,0)(x,1)(u,0)$ of the cubic bipartite graph $G\times K_2$, which is therefore not det-extremal by Observation~\ref{lem:noc4}.
Neither is $G$, according to Lemma~\ref{lem:coverprop}(1).
By Theorem~\ref{thm:detper}, $d_t(G)\ne1$, and so $d_t(G)\ge 2$.
\end{proof}

\section{Heawood graphs and proof of Theorem~\ref{thm:3connected}}\label{sec:heawood}

We begin by recording from the definition that a vertex-sum of two cubic bipartite graphs is cubic bipartite.
The $3$-connectedness is also preserved. 
 
\begin{observation}[{\cite[Lemma~31]{McCuaig}}]\label{obs:vsum3conn}
A vertex-sum of two $3$-connected cubic bipartite graphs is again a $3$-connected cubic bipartite graph.
\end{observation}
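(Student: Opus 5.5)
The statement to prove is that a vertex-sum $G$ of two $3$-connected cubic bipartite graphs $G_1,G_2$ (with respect to $x\in V(G_1)$, $y\in V(G_2)$) is again $3$-connected, cubic and bipartite. Cubicity is immediate from the construction. Each $x_i$ loses the edge $x_ix$ and gains $x_iy_i$, and similarly for each $y_i$. All other degrees are untouched.

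For bipartiteness, let $G_1$ have parts $X_1\ni x$ and $Y_1$, and $G_2$ have parts $X_2\ni y$ and $Y_2$. The neighbors $x_i$ of $x$ lie in $Y_1$, and the $y_i$ lie in $Y_2$. Take the parts of $G$ to be $(X_1\setminus\{x\})\cup Y_2$ and $Y_1\cup(X_2\setminus\{y\})$. Edges inside $G_1-x$ and $G_2-y$ cross these parts. Each new edge $x_iy_i$ joins $Y_1$ to $Y_2$, and $Y_2$ lies in the opposite part from $Y_1$. So $G$ is bipartite.

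For $3$-connectivity, I will use the fact (recalled in the proof of Lemma~\ref{lem:parity}) that vertex and edge connectivity agree for cubic graphs. So it suffices to show $G$ is connected and has no edge cut $F$ with $|F|\le 2$.

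First, $G_1-x$ is connected, since $G_1$ is $3$-connected, and likewise $G_2-y$. The three edges $x_iy_i$ join them, so $G$ is connected.

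Now suppose $F$ is an edge cut with $|F|\le 2$, and let $F_j=F\cap E(G_j-x)$ (with $G_2-y$ for $j=2$). I would first argue that $(G_1-x)-F_1$ is connected. If it were not, then $F_1\cup\{$some edges at $x\}$ would give a small cut in $G_1$. More carefully, suppose $(G_1-x)-F_1$ splits into pieces. Since $G_1-F_1$ is connected (as $|F_1|\le 2<3$), every piece must contain some $x_i$. Take a piece $P$ containing exactly one or two of the $x_i$. Then the edges from $P$ to the rest of $G_1$ are at most $|F_1|$ plus at most $2$ edges to $x$. We need a sharper count, so I would handle it by cases as follows.

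\textbf{Case $F\subseteq E(G_1-x)$.} Then $G_2-y$ is intact, and it is joined to every $x_i$ through the edges $x_iy_i$. Contracting $G_2-y$ to a single vertex recovers $G_1$, so $G-F$ is connected if and only if $G_1-F$ is connected, which holds because $|F|\le 2$. The case $F\subseteq E(G_2-y)$ is symmetric.

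\textbf{Case $F$ contains some of the edges $x_iy_i$.} In general, map $F$ to an edge set $F_1'$ of $G_1$ by sending each new edge $x_iy_i$ to $x_ix$. Then $|F_1'|\le 2$, so $G_1-F_1'$ is connected. Hence $(G_1-x)-F_1$, together with the edges $x_iy_i\notin F$ and the side $G_2-y$, must be reconnected through the non-cut edges. Here I would argue that every component of $(G_1-x)-F_1$ contains an $x_i$ whose edge $x_iy_i$ is not in $F$, unless the count of $F$ exceeds $2$. Doing the same on the $G_2$ side then shows that everything is attached to a connected remainder.

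The main obstacle is exactly this bookkeeping when $F$ mixes internal edges and summing edges. I would handle it by noting that $|F|\le 2$ leaves at least one summing edge intact. On each side, a component of the remainder not containing an intact $x_i$ would give an edge cut of $G_1$ of size at most $2$: its edges to the rest plus its edges to $x$. This contradicts $3$-edge-connectivity of $G_1$.
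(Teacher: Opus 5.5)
Your cubicity and bipartiteness checks are correct, as is your contraction argument for $F\subseteq E(G_1-x)$. The paper gives no proof of its own: it notes that cubic and bipartite follow from the definition and cites McCuaig's Lemma~31 for $3$-connectivity. Your summing-edge case also goes through once you say explicitly that $F$ then contains at most one internal edge. So one side, say $G_2-y$, is untouched and serves as the connected remainder to which each component of $(G_1-x)-F_1$ attaches via an intact $x_iy_i$.

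The genuine gap is that your case split omits $F=\{e_1,e_2\}$ with $e_1\in E(G_1-x)$ and $e_2\in E(G_2-y)$, and your criterion cannot handle this case. All three summing edges are intact, so every component of either remainder trivially contains an intact $x_i$ or $y_i$. Yet nothing you have said rules out $(G_1-x)-e_1$ splitting into $A_1\ni x_1$ and $A_2\ni x_2,x_3$ while $(G_2-y)-e_2$ splits into $B_1\ni y_1$ and $B_2\ni y_2,y_3$. Then $A_1\cup B_1$ and $A_2\cup B_2$ would be the two components of $G-F$. There is no untouched side here, so ``attached to a connected remainder'' has nothing to refer to.

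What is missing is the sharper count you started and then dropped. For a component $C$ of $(G_1-x)-F_1$, every edge of $G_1$ leaving $C$ is either an edge of $F_1$ or an edge $x_ix$ with $x_i\in C$. Summing $|\partial_{G_1}(C)|\ge 3$ over the $k$ components gives $3k\le 2|F_1|+3$. Hence $(G_1-x)-F_1$ is connected whenever $|F_1|\le 1$; equivalently, $G_1-x$ is $2$-connected and hence bridgeless.

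This closes the missing case: $(G_1-x)-e_1$ and $(G_2-y)-e_2$ are both connected and are joined by $x_1y_1$. It also gives a uniform argument. If $|F_1|,|F_2|\le 1$, both remainders are connected and some summing edge survives. Otherwise $F\subseteq E(G_1-x)$ or $F\subseteq E(G_2-y)$, and your contraction argument applies.
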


The following lemma summarizes the properties of a $k$-vertex-sum of the Heawood graph for some integer $k\ge 1$.
 
\begin{lemma}\label{lem:Heawood_Property}
Let $G$ be a $k$-vertex-sum of the Heawood graph for some integer $k\ge 1$.
\begin{itemize}
\item[\rm (1)] $G$ is a $3$-connected cubic bipartite graph.
\item[\rm (2)] $G$ has $12k+2$ vertices.
\item[\rm (3)] If $k\ge2$, then $G$ contains two vertex-disjoint copies of the Heawood graph minus a vertex, each of which contains a $6$-cycle.
\item[\rm (4)] For every $e\in E(G)$, the graph $G-e$ contains a $6$-cycle.
\item[\rm (5)] $G$ has girth $6$.
\end{itemize}
\end{lemma}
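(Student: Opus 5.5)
We argue by induction on $k$, using the recursive definition of a $k$-vertex-sum. The base case $k=1$ is the Heawood graph $\HH$ itself, which is $3$-connected, cubic and bipartite, and has $14$ vertices. Its girth is $6$, and it is edge-transitive. Deleting any edge $e$ leaves a $6$-cycle, because each edge of $\HH$ lies on several $6$-cycles: it has $28$ hexagons, each containing $6$ of the $21$ edges, so by edge-transitivity each edge lies on $8$ of them, and each vertex lies on hexagons avoiding a given edge.

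For $k\ge 2$, $G$ is a vertex-sum of some $G_1$ and $G_2$, where $G_i$ is a $k_i$-vertex-sum of $\HH$, $k_1+k_2=k$, and $k_i\ge1$. This holds because the last merging step joins two connected components, each of which is a vertex-sum of copies of $\HH$.

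Item (1) follows from induction and Observation~\ref{obs:vsum3conn}. Item (2) follows from $|V(G)|=|V(G_1)|+|V(G_2)|-2=(12k_1+2)+(12k_2+2)-2=12k+2$.

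For (3), we show more generally by induction that every $k$-vertex-sum with $k\ge1$ contains a copy of $\HH-v$ for some vertex $v$, and that when $k\ge 2$ the two sides of the final vertex-sum each contain one such copy, disjoint from the other. We strengthen the induction slightly. In $G_i$ there is a copy of $\HH$ minus a vertex that avoids the summing vertex $x$ (resp.\ $y$), except that when $G_i=\HH$ we use $G_i-x\cong\HH-x$ itself. If $G_i$ is itself a vertex-sum with $k_i\ge2$, then $G_i$ contains two disjoint copies of $\HH$ minus a vertex, so at least one of them misses $x$. That copy survives intact in $G_i-x\subseteq G$. The two copies, one inside $G_1-x$ and one inside $G_2-y$, are then vertex-disjoint. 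Each contains a $6$-cycle, since $\HH-v$ still has hexagons: by vertex-transitivity, only $28\cdot6/14=12$ of the $28$ hexagons pass through $v$.

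Item (4) follows from (3) when $k\ge2$. An edge $e$ meets at most one of the two disjoint copies, or lies between them, so the $6$-cycle in the other copy survives in $G-e$. For $k=1$ we use the edge-transitivity count from the base case.

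Item (5) is the main obstacle. The upper bound, girth at most $6$, is given by (4). For the lower bound, $G$ is bipartite, so there are no odd cycles, and we must exclude $4$-cycles. We plan to prove by induction that the girth is at least $6$. Any cycle of $G$ either lies within some $G_i-x$, where the induction hypothesis applies, or crosses the $3$-edge cut $\{x_1y_1,x_2y_2,x_3y_3\}$ exactly twice. In the crossing case, the cycle consists of a path $x_a\cdots x_b$ in $G_1-x$, a path $y_b\cdots y_a$ in $G_2-y$, and the two cut edges. Replacing the $G_2$ part by the path $x_a\,x\,x_b$ gives a cycle in $G_1$, so the $G_1$ path has length at least $\mathrm{girth}(G_1)-2\ge4$, and similarly for the $G_2$ path. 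The crossing cycle therefore has length at least $4+4+2=10>6$. Hence the girth is at least $6$, and with (4) it equals $6$.
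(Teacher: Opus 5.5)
Your proof is correct, and it differs from the paper's mainly in how the induction is organized. The paper encodes the merging process as a tree $T_0$ on the $k$ copies of $\HH$. For (3), two pendant vertices of $T_0$ give two vertex-disjoint copies of $\HH-v$ in one step, with no induction. For (5), it takes a pendant copy $H_k$ and asserts that $G$ is a vertex-sum of a $(k-1)$-vertex-sum $G'$ and $H_k$. That assertion is true, but it tacitly requires moving the sum involving $H_k$ to the end of the process: if the pendant copy was merged early, later sums may delete the outside neighbours of $H_k-y$ and re-route the three edges leaving it. Your split along the final merge, into a $k_1$-vertex-sum and a $k_2$-vertex-sum, is immediate from the definition and avoids this issue. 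The cost is that (3) must be proved inductively, by choosing in each summand a copy of $\HH-v$ that misses the summing vertex, and you do this correctly. For (5), the paper shows that a $4$-cycle meeting the $3$-edge cut would produce a triangle $xx_1x_2$ in the bipartite graph $G'$. You instead close each side through $x$ and $y$ and show that every cycle crossing the cut has length at least $\mathrm{girth}(G_1)+\mathrm{girth}(G_2)-2\ge 10$. This is the same shortcut idea, but your version needs no bipartiteness and gives a stronger bound. Finally, the paper leaves to inspection the facts that $\HH-v$ and $\HH-e$ contain $6$-cycles, and your count verifies them cleanly: there are $28$ hexagons (the $\binom{7}{3}-7$ non-collinear triples of the Fano plane), $12$ through each vertex and $8$ through each edge. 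You should state the conclusion as ``$28-8=20$ hexagons avoid $e$'' rather than ``each edge lies on several $6$-cycles'', since the latter is not by itself the reason.
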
 
 
\begin{proof}
Suppose that $G$ is obtained from $k$ disjoint copies $H_1,\dots,H_k$ of the
Heawood graph $\HH$ by $k-1$ successive vertex-sums. 
We record the process obtaining $G$ as a graph $T_0$, as follows.
The vertex set of $T_0$ is $\{v_1,\dots,v_k\}$, and $v_i$ is joined to $v_j$ whenever the process deletes a vertex of $H_i$ on one side and a vertex of $H_j$ on the other.
Note that $T_0$ is a tree, since each of the $k-1$ vertex-sums merges two connected components into one.
 
It is clear that (1) holds by Observation~\ref{obs:vsum3conn}, since $\HH$ is $3$-connected.
It is easy to check that the graph $G$ has $14k-2(k-1)=12k+2$ vertices, and hence (2) holds.
We can also check that $\HH-v$ contains a $6$-cycle for every $v\in V(\HH)$ and that $\HH-e$ contains a $6$-cycle for every $e\in E(\HH)$; see Figure~\ref{fig:heawood}.

Suppose $k\ge2$.
Since the tree $T_0$ has at least two pendant vertices, $G$ contains two vertex-disjoint copies of $\HH-v$, each of which contains a $6$-cycle.
Hence (3) holds.
In addition, for every $e\in E(G)$, at least one of the two copies avoids $e$, and so (4) holds when $k\ge2$; when $k=1$, (4) holds since $\HH-e$ contains a $6$-cycle for every edge $e$.
 
Finally, we prove (5) by induction on $k$.
If $k=1$, then $G=\HH$ has girth $6$.
Suppose $k\ge 2$, and without loss of generality let $v_k$ be a pendant vertex of $T_0$.
Then $G$ is a vertex-sum of $G'$ and $H_k$ with respect to $x$ and $y$ for some $x\in V(G')$ and $y\in V(H_k)$, where $G'$ is a $(k-1)$-vertex-sum of the Heawood graph.
By the inductive hypothesis, $G'$ has girth $6$, and $H_k=\HH$ has girth $6$.
Suppose that $G$ contains a $4$-cycle $C$.
The subgraphs $G'-x$ and $H_k-y$ of $G'$ and $H_k$ have no $4$-cycle, so $C$ uses at least one of the three added edges. These three edges form an edge cut of $G$, so $C$ uses an even number of them, hence exactly two, say $x_1y_1$ and $x_2y_2$ with $x_1,x_2\in N_{G'}(x)$ and $y_1,y_2\in N_{H_k}(y)$, together with the edge $x_1x_2$ of $G'-x$ and the edge $y_1y_2$ of $H_k-y$.
Then $xx_1x_2$ is a triangle of $G'$, contradicting that $G'$ is bipartite.
Hence $G$ has no $4$-cycle.
Since $G$ is bipartite by (1), has no $4$-cycle, and contains a $6$-cycle by (4), the girth of $G$ is $6$.
\end{proof} 

Now we are ready to prove Theorem~\ref{thm:3connected}.
 
\begin{proof}[Proof of Theorem~\ref{thm:3connected}]
Let $G$ be a $3$-connected det-extremal cubic graph.
Suppose to the contrary that $G$ is non-bipartite.
Then $G\times K_2$ is det-extremal by Lemma~\ref{lem:coverprop}(1), and by Lemma~\ref{lem:coverprop}(3) and (4), $G\times K_2$ is $3$-connected.
Thus $G\times K_2$ is a $k$-vertex-sum of the Heawood graph for some integer $k\ge 1$ by Theorem~\ref{thm:mccuaig}, and hence has $12k+2$ vertices by Lemma~\ref{lem:Heawood_Property}(2).
So $|V( G\times K_2 )|=2|V(G)|=12k+2$ and so $|V(G)|=6k+1$ is odd, which is a contradiction since a cubic graph has an even number of vertices.
Hence $G$ is bipartite, and by Theorem~\ref{thm:mccuaig} $G$ is a $k$-vertex-sum of the Heawood graph for some integer $k\ge 1$.
 
Conversely, suppose that $G$ is a $k$-vertex-sum of the Heawood graph for some integer $k\ge 1$.
Such a graph is $3$-connected cubic bipartite by Lemma~\ref{lem:Heawood_Property}(1).
By Theorem~\ref{thm:mccuaig}, $G$ is det-extremal.
\end{proof}

\begin{proof}[Proof of Proposition~\ref{thm:three:conditions}]
We prove the contrapositive, so let $G$ be a $3$-connected det-extremal cubic graph.
By Theorem~\ref{thm:3connected}, $G$ is bipartite and is a $k$-vertex-sum of the Heawood graph for some integer $k\ge 1$.
In particular (1) fails.
By Lemma~\ref{lem:Heawood_Property}(5), the girth of $G$ is $6$, so (2) fails.
By Lemma~\ref{lem:Heawood_Property}(2), $|V(G)|=12k+2$, so (3) fails as well.
\end{proof}

\section{Girth, orders, and proofs of Theorems~\ref{thm:girth}~and~\ref{thm:no22}}\label{sec:cubic} 
 
Subsection~\ref{sub:fjls} recalls a decomposition of a $2$-connected cubic bipartite graph into $3$-connected pieces.
Subsection~\ref{sec:girth} uses it to prove  Theorem~\ref{thm:girth}.
Subsection~\ref{sub:nonbip} studies the orders of connected det-extremal cubic non-bipartite graph and proves Theorem~\ref{thm:no22}. 
 
\subsection{Decomposition into $3$-connected pieces}\label{sub:fjls}
 
We state the decomposition of a $2$-connected cubic bipartite graph into $3$-connected pieces,  described in Funk, Jackson, Labbate, and Sheehan \cite[Section~2]{FJLS}.
A \emph{cactus} is a connected multigraph each of whose blocks is a cycle.
 
\begin{definition}[{\cite[Section~2]{FJLS}}]\label{def:decompo}
Let $G$ be a $2$-connected cubic bipartite graph. 
Define $uRv$ if $u=v$ or $u$ and $v$ are joined by three edge-disjoint paths in $G$, let $V_1,\dots,V_t$ be the equivalence classes of $R$, and let $T$ be the multigraph obtained from $G$ by contracting each $V_i$ to a vertex $v_i$ and deleting loops. 
Then $T$ is a cactus, and we call $T$ the \emph{decomposition cactus} of $G$.
Let $H_i$ be the subgraph of $G$ induced by $V_i$.
For each $2$-edge cut $\{e,f\}$ of $T$ with $e$ and $f$ both incident with $v_i$, we add to $H_i$ the edge joining the two vertices of $V_i$ incident with $e$ and $f$ in $G$, which are distinct because $G$ is $2$-connected.
The resulting multigraph $G_i$ is called the \emph{$3$-connected piece at} $v_i$. 
The added edges are its \emph{marker edges}.
A \emph{$3$-bond} is the multigraph on two vertices joined by three parallel edges.
See Figure~\ref{fig:decomp} for an illustration.
\end{definition}
 
\begin{figure}[ht]
\centering
\begin{tikzpicture}[scale=0.9,
 v/.style={circle,fill,inner sep=1.3pt},
 cls/.style={draw,ellipse,minimum width=13mm,minimum height=13mm,font=\small},
 tn/.style={circle,draw,fill=white,inner sep=1.6pt},
 mk/.style={dashed,thick},
 capt/.style={font=\small,align=center}]
\begin{scope}[shift={(0,0)}]
 \node[cls] (C) at (0,0) {$V_2$};
 \node[cls] (U) at (90:2.05) {$V_1$};
 \node[cls] (L) at (210:2.05) {$V_3$};
 \node[cls] (R) at (330:2.05) {$V_4$};
 \foreach \n in {U,L,R}{\draw (C) to[bend left=16] (\n); \draw (C) to[bend right=16] (\n);}
 \node[capt] at (0,-3.05) {\footnotesize{(a) $G$, with the classes of $R$}\\ \footnotesize{and the six edges between them}};
\end{scope}
\begin{scope}[shift={(5.9,0)}]
 \node[tn,label=below:{\footnotesize$v_2$}] (c) at (0,0) {};
 \node[tn,label=above:{\footnotesize$v_1$}] (u) at (90:2.05) {};
 \node[tn,label=left:{\footnotesize$v_3$}] (l) at (210:2.05) {};
 \node[tn,label=right:{\footnotesize$v_4$}] (r) at (330:2.05) {};
 \foreach \n in {u,l,r}{\draw (c) to[bend left=16] (\n); \draw (c) to[bend right=16] (\n);}
 \node[capt] at (0,-3.05) {\footnotesize{(b) the decomposition cactus $T$} \\ \footnotesize{with three blocks}};
\end{scope}
\begin{scope}[shift={(11.8,0)}]
 \node[v] (p) at (-0.45,0.28) {}; \node[v] (q) at (0.45,0.28) {};
 \draw[mk] (p) to[bend left=38] (q); \draw[mk] (p) -- (q); \draw[mk] (p) to[bend right=38] (q);
 \node[font=\footnotesize] at (0,-0.28) {$G_2$, a $3$-bond};
 \foreach \a/\nm in {90/G_1, 210/G_3, 330/G_4}{
 \node[cls] (B\a) at (\a:2.15) {};
 \node[font=\small] at (\a:2.55) {$\nm$};
 \node[v] (s\a) at ($(\a:2.15)+(\a+90:0.45)$) {};
 \node[v] (t\a) at ($(\a:2.15)+(\a-90:0.45)$) {};
 \draw[mk] (s\a) -- (t\a);}
 \node[capt] at (0,-3.05) {\footnotesize{(c) the $3$-connected pieces,}\\ \footnotesize{marker edges dashed}};
\end{scope}
\end{tikzpicture}
\caption{An illustration of the decomposition in Definition~\ref{def:decompo}}
\label{fig:decomp}
\end{figure}
 
The following is immediate, and also stated in {\cite[Section~2]{FJLS}}.
 
\begin{observation}\label{obs:3con-decom}
In Definition~\ref{def:decompo}, each $G_i$ is either a $3$-bond or a simple $3$-connected cubic bipartite graph.
Thus each $G_i$ is $3$-edge-connected.
\end{observation}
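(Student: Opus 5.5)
Observation~\ref{obs:3con-decom} has two claims: each piece $G_i$ is a $3$-bond or a simple $3$-connected cubic bipartite graph, and hence each $G_i$ is $3$-edge-connected. We derive the first claim directly from Definition~\ref{def:decompo}; the second then follows at once.

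\emph{Cubicity.} Fix a class $V_i$. In the cactus $T$, the edges at $v_i$ are exactly the edges of $G$ leaving $V_i$. Two edges leaving $V_i$ lie in a common block of $T$ exactly when they form a $2$-edge cut of $T$ through $v_i$. A block (cycle) of $T$ through $v_i$ uses exactly two edges at $v_i$, so the edges leaving $V_i$ split into such pairs. For each pair $\{e,f\}$, the two endpoints in $V_i$ are distinct because $G$ is $2$-connected, and they receive one marker edge. Each of them loses one edge and gains one edge, so every vertex of $G_i$ keeps degree $3$. Hence $G_i$ is a cubic multigraph.

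\emph{Bipartiteness.} Let $\{e,f\}$ be the pair of edges of a block $C$ of $T$ at $v_i$, with endpoints $a,b\in V_i$. The part of $G$ corresponding to the rest of $C$ is the side of the $2$-edge cut $\{e,f\}$ not containing $V_i$. Call it $S$ and let it have parts $X,Y$. Every vertex of $S$ has degree $3$ in $G$, except that the two ends of $e$ and $f$ inside $S$ each lose one edge. The counting in the proof of Lemma~\ref{lem:parity} then forces these two ends to lie in different parts. Otherwise $3|X|-2=3|Y|$, which is impossible; if both ends are one vertex, then $3|X|-2=3|Y|$ again. So $S$ contains an odd path from one end to the other, and the walk $a\to S\to b$ has even length. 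Consequently $a$ and $b$ lie on the same side of $G$? Parity: $a$, then $e$ (1), an odd path in $S$, then $f$ (1), for odd total length. So $a$ and $b$ lie on opposite sides of $G$, and the marker edge $ab$ preserves the bipartition.

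\emph{Connectivity.} Every two vertices of $V_i$ are joined by three edge-disjoint paths in $G$. Each such path that leaves $V_i$ must exit and return through one block of $T$, that is, through a pair $\{e,f\}$. Replace that excursion by the marker edge for the pair; distinct paths using distinct edges give distinct markers. This yields three edge-disjoint paths in $G_i$. Hence $G_i$ is $3$-edge-connected, and it is connected because all vertices of $V_i$ are pairwise related.

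\emph{Classification.} If $|V_i|=2$, a cubic multigraph on two vertices with no loops is the $3$-bond. Otherwise a connected cubic $3$-edge-connected multigraph on more than two vertices has no parallel edges, since a double edge $uv$ would leave a cut of size $2$ from the remaining edges at $u$ and $v$. Being cubic, $G_i$ is then $3$-connected as a simple graph, since for cubic graphs vertex and edge connectivity coincide.

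The main obstacle is the interaction between markers and genuine edges. A marker may parallel an existing edge of $H_i$, or two markers may coincide. The argument above handles both cases: parallel edges with more than two vertices would contradict $3$-edge-connectivity, and $|V_i|=2$ gives the $3$-bond. This gives the first claim, and $3$-edge-connectivity follows immediately in both cases.
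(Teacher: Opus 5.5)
Your argument is correct. It differs from the paper mainly in that the paper gives no argument: it calls the observation immediate and defers to Section~2 of Funk, Jackson, Labbate, and Sheehan, where the decomposition is set up. You instead verify everything from Definition~\ref{def:decompo}, together with the given fact that $T$ is a cactus:
\begin{itemize}
\item cubicity, because the cycle blocks through $v_i$ pair up the edges leaving $V_i$, and each pair $\{e,f\}$ replaces the leaving edge at each of its two distinct ends in $V_i$ by a single marker edge between them;
\item bipartiteness, by running the edge count from the proof of Lemma~\ref{lem:parity} on the far side $S$ of each $2$-edge cut;
\item $3$-edge-connectivity, by rerouting each excursion of a path outside $V_i$ along the corresponding marker edge, since such an excursion must enter and leave $S$ through $e$ and $f$; edge-disjointness survives because distinct excursions consume distinct pairs;
\item simplicity for $|V_i|>2$, from $3$-edge-connectivity.
\end{itemize}
The citation buys brevity. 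Your route buys a self-contained check that shows exactly which hypotheses ($2$-connectivity, the cactus structure, the parity count) produce each property.

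Three cleanups are needed.
\begin{itemize}
\item Delete the sentence asserting that the walk $a\to S\to b$ has even length and that $a,b$ lie on the same side. It is wrong, and you correct it in the next line.
\item No path in $S$ is needed at all. Once the ends of $e$ and $f$ in $S$ lie in different parts, so do their neighbors $a$ and $b$.
\item Say explicitly that $|V_i|=1$ is impossible: a single vertex would have three leaving edges, an odd number, whereas leaving edges come in pairs. This makes your case split into $|V_i|=2$ and $|V_i|>2$ exhaustive.
\end{itemize}
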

 
\begin{lemma}\label{lem:cactus}
Let $G$ be a $2$-connected cubic bipartite graph with decomposition cactus $T$, and suppose that $T$ has at least two vertices. 
\begin{itemize}
\item[\rm (1)] For a vertex $v'\in V(T)$, if $\deg_T(v')=2$, then the $3$-connected piece $G'$ at $v'$ has exactly one marker edge.
\item[\rm (2)] $T$ has a vertex of degree exactly $2$.
\end{itemize}
\end{lemma}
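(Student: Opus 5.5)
Both parts rest on the structure of the decomposition cactus $T$ from Definition~\ref{def:decompo}. Since $G$ is cubic and $2$-connected, every edge of $G$ joining two different classes $V_i,V_j$ survives as an edge of $T$, so $\deg_T(v_i)$ equals the number of edges of $G$ leaving $V_i$. Since $T$ is a cactus, the edges at $v_i$ split into pairs, one pair for each block (cycle) of $T$ through $v_i$. For a block that is a $2$-cycle, the pair is a pair of parallel edges. In every case the two edges of one block at $v_i$ form a $2$-edge cut of $T$; for a longer cycle any two of its edges form a $2$-edge cut. I would first check that the pairs $\{e,f\}$ used in the definition of marker edges are exactly these block-pairs at $v_i$. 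Two edges at $v_i$ from different blocks do not form an edge cut, since removing them leaves each block connected through its other edge at $v_i$. So the number of marker edges of $G_i$ is exactly $\deg_T(v_i)/2$.

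For (1), if $\deg_T(v')=2$, then $v'$ lies on exactly one block of $T$, and its two edges $e,f$ in $T$ form the unique relevant $2$-edge cut. Hence $G'$ receives exactly one marker edge. The ends of $e$ and $f$ in $V'$ are distinct by $2$-connectedness, as the definition notes, so this marker edge is a genuine edge.

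For (2), I would take a leaf block of the cactus. Since $T$ has at least two vertices and is connected, it has at least one block. If $T$ is a single cycle, every vertex has degree $2$ and we are done. Otherwise the block–cutvertex tree of $T$ has a leaf block $B$, a cycle containing exactly one cut vertex of $T$. Any other vertex $w$ of $B$ lies only in $B$ and so has $\deg_T(w)=2$. Such a $w$ exists because a cycle in a multigraph without loops has at least two vertices.

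The only real point of care is justifying that $T$ is loopless with every block a cycle, so that degrees are even and split into block-pairs. This is given as part of Definition~\ref{def:decompo}, so the main obstacle is just the careful identification of the $2$-edge cuts at a vertex with its blocks, which I handle by the cut argument above.
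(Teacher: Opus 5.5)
Your proof is correct and is essentially the paper's argument: for (1) you observe that the two edges at a degree-$2$ vertex $v'$ lie in a single cycle block and form the only $2$-edge cut at $v'$, and for (2) you use the same end-block argument. The only difference is that you first prove the general count of $\deg_T(v_i)/2$ marker edges, including that cross-block pairs are not cuts. This is more than (1) needs, since when $\deg_T(v')=2$ there is only one pair of edges at $v'$ to begin with.
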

 
\begin{proof}
(1) Since $\deg_T(v')=2$, there is exactly one pair of edges of $T$ incident to $v'$, so $G'$ has at most one marker edge.
Every block of $T$ is a cycle, so $T$ has no bridge.
As $T$ has at least two vertices, the two edges incident to $v'$ therefore form a $2$-edge cut of $T$, and $G'$ has a marker edge.
 
(2) If $T$ has a single block, that block is a cycle and every vertex has degree $2$.
Otherwise, $T$ has an end-block $B$ containing exactly one cut vertex of $T$.
As $B$ is a cycle it has at least two vertices, so some vertex of $B$ is not a cut vertex of $T$.
It lies in no other block and hence has degree $2$ in $T$.
\end{proof}
 
The last theorem we quote is the following.

\begin{theorem}[{\cite[Theorem~4.1]{FJLS}}]\label{thm:fjls41}
Let $G$ be a det-extremal cubic bipartite graph of connectivity $2$ and let $G_i$ be a $3$-connected piece at a vertex of degree at most $4$ in the decomposition cactus $T$ of $G$.
Then either $G_i$ is det-extremal, or $G_i$ is a $3$-bond at a vertex of degree exactly $4$ in $T$.
\end{theorem}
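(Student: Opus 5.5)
The plan is to work entirely through Lemma~\ref{thm:central4}. We show that every central cycle of $G_i$ has length $\equiv 2\pmod 4$, unless $G_i$ is a $3$-bond at a vertex of degree $4$ in $T$. A $3$-bond is never det-extremal: its biadjacency matrix is $(3)$, so $\per=3\neq 1$ only in the multigraph sense, and we simply put it aside as the exceptional case. So assume $G_i$ is a simple $3$-connected cubic bipartite graph (Observation~\ref{obs:3con-decom}). Since $\deg_T(v_i)\le 4$, the piece $G_i$ has at most two marker edges.

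\emph{Local structure at a marker edge.} Each marker edge $e=ab$ of $G_i$ comes from a $2$-edge cut $\{aa',bb'\}$ of $G$. This cut separates $V_i$ from a set $S_e$, the union of the classes on the far side of the corresponding cycle of $T$.

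By the edge count of Lemma~\ref{lem:parity}, $G[S_e]$ is subcubic with exactly two vertices $a',b'$ of degree $2$, and these lie in opposite colour classes. Hence $G[S_e]$ is balanced, and a Hall argument as in the proof that $4$-cycles are central shows that $G[S_e]$ has a perfect matching. Call an $a'$–$b'$ path $P$ in $G[S_e]$ \emph{good} if $G[S_e]-V(P)$ has a perfect matching.

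\emph{Lifting cycles.} Let $C$ be a central cycle of $G_i$, and fix a perfect matching $F$ of $G_i-V(C)$.
\begin{itemize}
\item[(a)] If $e\notin C\cup F$, we keep $C$ and extend $F$ by a perfect matching of $G[S_e]$.
\item[(b)] If $e\in F$, we replace $e$ by the two cut edges $aa',bb'$ in the matching. We then need a perfect matching of $G[S_e]-\{a',b'\}$; it exists by the same Hall count, or by taking a good path and re-matching along it.
\item[(c)] If $e\in C$, we replace $e$ by $aa'\,P\,b'b$ for a good path $P$, adding the matching of $G[S_e]-V(P)$.
\end{itemize}
In each case we obtain a central cycle of $G$, handling the at most two marker edges independently. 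Its length is $|C|$ plus $\sum(|P_e|+1)$ over the marker edges $e$ used by $C$.

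Since $G$ is det-extremal, all these lengths are $\equiv 2\pmod 4$. Two good paths in $S_e$ can be combined with a fixed central route through $G_i$ minus $e$. This shows that every good path in $S_e$ has the same length modulo $4$, say $\lambda_e\in\{1,3\}$, as it is odd by bipartiteness.

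\emph{Main obstacle: showing $\lambda_e\equiv1$.} We must prove $\lambda_e\equiv 1\pmod 4$, so that replacing $P_e$ by the single edge $e$ does not change length modulo $4$. This is where the degree hypothesis enters. I would argue by induction on $|V(T)|$, applied to the graph $G_e^{*}=G[S_e]+a'b'$ obtained by closing $S_e$ with a new edge. This graph is again a $2$-connected cubic bipartite graph whose decomposition cactus is the corresponding branch of $T$ with a smaller vertex set. A good path $P$ plus the edge $a'b'$ is a central cycle of $G_e^*$, so $\lambda_e\equiv 1$ is equivalent to $G_e^*$ having all central cycles through $a'b'$ of length $\equiv 2$.

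We then show that $G_e^*$ inherits det-extremality from $G$. This uses the reverse substitution: glue $G_e^*$ to $G_i$, and use that $G_i$, being $3$-connected with $\le 2$ markers, contains central cycles through $e$ and avoiding it. The second marker edge is what can spoil this. When both markers sit on a $3$-bond, the two branch lengths can both be $\equiv 3$, which is exactly the case of a $3$-bond at a degree-$4$ vertex. I expect this parity bookkeeping around the $3$-bond, at $\deg_T=4$, to be the delicate point. I would first try to settle the $\deg_T(v_i)=2$ case, using Lemma~\ref{lem:cactus}(1), and then treat $\deg_T=4$ by applying the degree-$2$ case to a suitable vertex of each branch, as supplied by Lemma~\ref{lem:cactus}(2).
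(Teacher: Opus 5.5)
The paper gives no proof of this statement; it quotes it as Theorem~4.1 of Funk, Jackson, Labbate and Sheehan. So your proposal has to stand on its own, and it has a genuine gap at the step you flag as the main obstacle. In fact that step points in the wrong direction.

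\textbf{The target congruence is wrong.} Your lifting bookkeeping is fine. Good paths exist, for instance as the $a'$--$b'$ component of the symmetric difference of a perfect matching of $G[S_e]$ and one of $G[S_e]-\{a',b'\}$, and $\lambda_e$ is well defined. But by your own formula, a central cycle $C$ of $G_i$ through a marker edge $e$ lifts to length $|C|+|P_e|+1$, because $e$ is replaced by the path $aa'P_eb'b$ of length $|P_e|+2$; you dropped the two cut edges. Residues are therefore preserved exactly when $\lambda_e\equiv 3\pmod 4$. With $\lambda_e\equiv 1$, every central cycle of $G_i$ through exactly one marker edge would have length $\equiv 0\pmod 4$, and $G_i$ would \emph{not} be det-extremal.

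\textbf{The inheritance claim is false.} Correspondingly, what has to be shown is that the central cycles of $G_e^{*}=G[S_e]+a'b'$ through $a'b'$ have length $\equiv 0\pmod 4$. So $G_e^{*}$ is in general not det-extremal. Take the $44$-vertex graph with the structure of Claim~\ref{cite:proof:FJLS}:
\begin{itemize}
\item three copies $H_j$ of $\HH$ minus an edge $x_jy_j$;
\item two non-adjacent vertices $a,b$ with $a\sim x_j$ and $b\sim y_j$.
\end{itemize}
Lemma~\ref{thm:central4} shows directly that this graph is det-extremal. For the Heawood piece at $v_1$, we get $G_e^{*}=G[\{a,b\}\cup V(H_2)\cup V(H_3)]+ab$. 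This contains the central cycle $a\,x_2P_2y_2\,b\,a$, where $P_2+x_2y_2$ is a central cycle of $\HH$, so its length is $|P_2|+3\equiv 0\pmod 4$.

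Had your induction succeeded, it would have refuted the theorem. By your own formula, $G_e^{*}$ being det-extremal forces $\lambda_e\equiv1$, and hence, at a degree-$2$ vertex, forces $G_i$ not to be det-extremal. For example, gluing two copies of $\HH$ along a $2$-edge cut gives a $28$-vertex graph that is not det-extremal.

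\textbf{The missing global idea.} Even with the target corrected, nothing in your local analysis excludes $\lambda_e\equiv 1$. At a degree-$2$ vertex, every constraint coming from central cycles of $G$ is satisfied as soon as two things hold: $G_e^{*}$ is det-extremal, and the central cycles of $G_i$ through $e$ have length $\equiv 0\pmod 4$ while all others have length $\equiv 2\pmod 4$. At vertices of degree $6$ the residue $1$ really occurs: seen from the $3$-bond $\{a,b\}$ in the example above, each branch $H_j$ has good paths of length $\equiv 1\pmod 4$. So the hypothesis $\deg_T(v_i)\le 4$ must be used through a global argument that rules out this configuration for a $3$-connected piece. That is the actual content of the cited theorem, and your plan (``settle degree $2$, then apply it in each branch'') does not supply it.

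\textbf{Smaller points about the $3$-bond.}
\begin{itemize}
\item Your reason for discarding it is incorrect, since $\det(3)=\per(3)=3$. What you need instead is that a $3$-bond piece has at least two marker edges because $G$ is simple. It therefore cannot sit at a degree-$2$ vertex, and so falls under the stated exception.
\item A $3$-bond at a degree-$4$ vertex forces both branch residues to be $3$, since its cycles $\{ab,m_j\}$ lift to length $\lambda_j+3$. This is the same as what a det-extremal piece forces, so the $3$-bond is not where the parity bookkeeping breaks down.
\end{itemize}
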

 
\subsection{Proof of Theorem~\ref{thm:girth}}\label{sec:girth}
 
Although the following lemma is intended for the proof of Theorem~\ref{thm:girth}, it is of independent interest, as its reformulation in the language of configurations in Section~\ref{sec:B:configuration} yields a new result.
 
\begin{lemma}\label{lem:detgirth}
Every connected det-extremal cubic bipartite graph has girth exactly $6$.
\end{lemma}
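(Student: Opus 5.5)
Since $G$ is bipartite, its girth is even, and Observation~\ref{lem:noc4} excludes $4$. It therefore suffices to show that $G$ contains a $6$-cycle. Let $G$ be a connected det-extremal cubic bipartite graph. By Lemma~\ref{lem:parity}(2) its connectivity is $2$ or $3$, and I would induct on $|V(G)|$.

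\textbf{Case of connectivity $3$.} By Theorem~\ref{thm:mccuaig}, $G$ is a $k$-vertex-sum of $\HH$. Lemma~\ref{lem:Heawood_Property}(4) then provides a $6$-cycle, and this case is done.

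\textbf{Case of connectivity $2$.} Let $T$ be the decomposition cactus (Definition~\ref{def:decompo}), which has at least two vertices. By Lemma~\ref{lem:cactus}(2), $T$ has a vertex $v'$ of degree $2$. By Lemma~\ref{lem:cactus}(1), the piece $G'$ at $v'$ has exactly one marker edge $e$. Since $\deg_T(v')=2\le4$, Theorem~\ref{thm:fjls41} shows that $G'$ is det-extremal; the $3$-bond exception needs degree $4$ and so does not occur. Moreover, a $3$-bond has two marker edges only if it corresponds to two $2$-edge cuts, and a degree-$2$ vertex gives only one. So $G'$ is not a $3$-bond, and Observation~\ref{obs:3con-decom} makes it a simple $3$-connected cubic bipartite det-extremal graph. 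By Theorem~\ref{thm:mccuaig}, $G'$ is a $k$-vertex-sum of $\HH$, and Lemma~\ref{lem:Heawood_Property}(4) yields a $6$-cycle $C$ in $G'-e$. Every edge of $G'-e$ is an edge of $H_i=G[V_i]$, so $C$ is a $6$-cycle of $G$.

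This already finishes the proof, so no induction is needed after all; the induction was only a fallback. The one delicate point, which I expect to be the main obstacle, is justifying that the degree-$2$ piece is neither a $3$-bond nor uses marker edges other than $e$. The $3$-bond question was settled by the marker-edge count above. For the second point, the only marker edge is $e$, so every other edge of $G'$ is an edge of $H_i$ and hence a genuine edge of $G$. I would double-check this against Definition~\ref{def:decompo}.
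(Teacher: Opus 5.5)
Your proof is correct and follows the paper's argument essentially step for step. You handle the $3$-connected case via Theorem~\ref{thm:mccuaig} and Lemma~\ref{lem:Heawood_Property}; in connectivity $2$ you pick a degree-$2$ cactus vertex whose piece $G'$, with a single marker edge $e$, is a simple $3$-connected det-extremal graph by Theorem~\ref{thm:fjls41}, so $G'-e\subseteq G$ contains a $6$-cycle. The one step you leave implicit, and which the paper states, is why a $3$-bond piece must have at least two marker edges: $G$ is simple, so its two vertices are joined by at most one edge of $G$, and hence at least two of the three parallel edges are markers.
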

 
\begin{proof}
Let $G$ be a connected det-extremal cubic bipartite graph.
 Since $G$ is bipartite and, by Observation~\ref{lem:noc4}, has no $4$-cycle,  its girth is at least $6$.
It remains to produce a $6$-cycle.
By Lemma~\ref{lem:parity}(2), $G$ does not have a bridge, so its connectivity is $2$ or $3$.
If $G$ is $3$-connected, then by Theorem~\ref{thm:mccuaig} it is a $k$-vertex-sum of the Heawood graph for some integer $k\ge 1$, and by Lemma~\ref{lem:Heawood_Property}(5) it has girth $6$.
 
Suppose that $G$ has connectivity $2$.
Let $T$ be the decomposition cactus of $G$ and let $G_1,\dots,G_t$ be its $3$-connected pieces at $v_1,\ldots, v_t$, respectively, as in Definition~\ref{def:decompo}.
Here $t\ge2$, since $G$ has connectivity $2$.
By Lemma~\ref{lem:cactus}(2), choose $v_i$ with $\deg_T(v_i)=2$.
By Lemma~\ref{lem:cactus}(1), the $3$-connected piece $G_i$ at $v_i$ has exactly one marker edge, say $f$.
Hence $G_i$ is not a $3$-bond; indeed, since $G$ is simple, the two vertices of a $3$-bond piece are joined by at most one edge of $G$, so a $3$-bond piece has at least two marker edges. 
Then by Observation~\ref{obs:3con-decom} $G_i$ is a $3$-connected cubic bipartite graph.
By Theorem~\ref{thm:fjls41}, $G_i$ is det-extremal.
So $G_i$ is a $k$-vertex-sum of the Heawood graph for some integer $k\ge 1$ by Theorem~\ref{thm:mccuaig}, and $G_i-f$ contains a $6$-cycle by Lemma~\ref{lem:Heawood_Property}(4). 
As $G_i-f$ is a subgraph of $G$, $G$ contains a $6$-cycle.
\end{proof}
 
\begin{proof}[Proof of Theorem~\ref{thm:girth}]
Let $G$ be a connected det-extremal cubic graph.
By Lemma~\ref{lem:coverprop}(1), $G\times K_2$ is det-extremal.
Since a cubic bipartite graph has a perfect matching, every connected component of $G\times K_2$ is det-extremal by Lemma~\ref{lem:coverdetM}.
Each connected component of $G\times K_2$ is a connected det-extremal cubic bipartite graph, hence has girth $6$ by Lemma~\ref{lem:detgirth}.
Thus $G\times K_2$ has girth $6$.
By Lemma~\ref{lem:coverprop}(2), the girth of $G$ is at most $6$. 
By  Theorem~\ref{thm:detper} and Corollary~\ref{cor:4cycle} it has no $4$-cycle, so its girth lies in $\{3,5,6\}$.
\end{proof}

\begin{remark}\label{rem:pfaffian}
Recall from Section~\ref{sec:intro} that a bipartite graph with parts of equal size is Pfaffian if some matrix obtained from a biadjacency matrix $B$ by changing some entries from $1$ to $-1$ has absolute determinant equal to $\per B$.
Robertson, Seymour, and Thomas \cite{RST} and McCuaig \cite{McCuaigPolya} characterized connected cubic Pfaffian bipartite graphs (see also \cite[Theorem~2.1]{FJLS}).
Their characterization shows that infinitely many such graphs contain a $4$-cycle, whereas Lemma~\ref{lem:detgirth} says that a det-extremal cubic bipartite graph cannot. Thus, there exists a Pfaffian bipartite graph that is not det-extremal.
\end{remark}

\subsection{Proof of Theorem~\ref{thm:no22}}\label{sub:nonbip}

By Theorem~\ref{thm:fjlsorders}, a connected det-extremal cubic bipartite graph of order $n$ exists if and only if $n\in\{14, 26, 38, 42, 44, 50\}$ or even $n\ge 54$. 
Let $G$ be a connected det-extremal cubic non-bipartite graph of order $n$.
Then $G\times K_2$ is connected by Lemma~\ref{lem:coverprop}(3) and det-extremal by Lemma~\ref{lem:coverprop}(1) with $|V(G\times K_2)|=2n$.
Again by Theorem~\ref{thm:fjlsorders}, $2n\in\{14, 26, 38, 42, 44, 50\}$ or $2n\ge 54$.
Since $n$ is even, $n=22$ or $n\ge 28$.
Hence, every connected det-extremal cubic non-bipartite graph has $22$ vertices or at least $28$ vertices.

\begin{figure}[b!]
\centering
\begin{tikzpicture}[ scale=0.9,
  v/.style={circle,fill,inner sep=1.6pt}
]
\def\R{2.4}
\def\LX{-3.5}
\def\HX{3.5}
\foreach \i in {1,...,12}{
  \pgfmathsetmacro{\ang}{15+30*(\i-1)}
  \node[v] (t\i) at
    ({\LX+\R*cos(\ang)},{\R*sin(\ang)}) {};
}
\draw (t1)--(t2);
\draw (t2)--(t3);
\draw (t3)--(t4);
\draw (t4)--(t5);
\draw (t5)--(t6);
\draw (t6)--(t7);
\draw (t7)--(t8);
\draw (t8)--(t9);
\draw (t9)--(t10);
\draw (t10)--(t11);
\draw (t11)--(t12);
\draw (t1)--(t6);
\draw (t2)--(t10);
\draw (t3)--(t8);
\draw (t4)--(t12);
\draw (t5)--(t9);
\draw (t7)--(t11);
\draw (t12)--(t1);
 \end{tikzpicture}
\caption{The Twinplex graph $\mathbb{T}$}
\label{fig:twin}
\end{figure}

Here is a construction of a connected det-extremal cubic non-bipartite graph with $28$ vertices. The \emph{Twinplex graph} $\mathbb{T}$ given in Figure~\ref{fig:twin} is a cubic non-bipartite Hamiltonian graph on $12$ vertices, which is also denoted by $\Gamma_2$
in~\cite{FL}.  Choose the edge $e_T$ of $\mathbb{T}$ indicated in
Figure~\ref{fig:28vertex} and subdivide it once; denote the resulting graph by $\mathbb{T}^*$, and let $t$ be the new subdivision vertex. Similarly, choose an edge $e_H$ of the Heawood graph $\HH$ and subdivide it once; denote the resulting graph by $\HH^*$, and let $h$ be the new subdivision vertex. Finally, add the edge $th$, and the resulting graph $G$ is a cubic non-bipartite graph on $28$ vertices, see Figure~\ref{fig:28vertex}. 
Thus $G$ is obtained from the disjoint union of $\mathbb{T}^*$ and $\HH^*$ by adding the edge $th$. 

\begin{figure}[h!]
\centering
\begin{tikzpicture}[
  v/.style={circle,fill,inner sep=1.6pt}
]
\def\R{2.4}
\def\LX{-3.5}
\def\HX{3.5}
\foreach \i in {1,...,12}{
  \pgfmathsetmacro{\ang}{15+30*(\i-1)}
  \node[v] (t\i) at
    ({\LX+\R*cos(\ang)},{\R*sin(\ang)}) {};
}
\draw (t1)--(t2);
\draw (t2)--(t3);
\draw (t3)--(t4);
\draw (t4)--(t5);
\draw (t5)--(t6);
\draw (t6)--(t7);
\draw (t7)--(t8);
\draw (t8)--(t9);
\draw (t9)--(t10);
\draw (t10)--(t11);
\draw (t11)--(t12);
\draw (t1)--(t6);
\draw (t2)--(t10);
\draw (t3)--(t8);
\draw (t4)--(t12);
\draw (t5)--(t9);
\draw (t7)--(t11);
\node at (-0.4,0.75) {$e_T$};
\draw[->] (-0.6,0.6)--(-1.15,0.3); 
\node at (0.4,0.75) {$e_H$};
\draw[->] (0.6,0.6)--(1.15,0.3); 
\node at (-1.05,-0.18) {$t$};
\node at (1.0,-0.18) {$h$};
\node[v] at (-1.18,0) {};
\draw (t12)--(t1);
\foreach \i in {1,...,14}{
  \pgfmathsetmacro{\ang}{180+180/14-\i*360/14}
  \node[v] (h\i) at
    ({\HX+\R*cos(\ang)},{\R*sin(\ang)}) {};} 
\draw (h1)--(h2);
\draw (h2)--(h3);
\draw (h3)--(h4);
\draw (h4)--(h5);
\draw (h5)--(h6);
\draw (h6)--(h7);
\draw (h7)--(h8);
\draw (h8)--(h9);
\draw (h9)--(h10);
\draw (h10)--(h11);
\draw (h11)--(h12);
\draw (h12)--(h13);
\draw (h13)--(h14);
\draw (h1)--(h6);
\draw (h3)--(h8);
\draw (h5)--(h10);
\draw (h7)--(h12);
\draw (h9)--(h14);
\draw (h11)--(h2);
\draw (h13)--(h4);
\node[v] at (1.15,0) {};
\draw (h14)--(1.15,0)--(h1);
\draw (-1.15,0)--(1.15,0);
\end{tikzpicture}
\caption{A det-extremal cubic non-bipartite graph $G$ with $28$ vertices}
\label{fig:28vertex}
\end{figure} 

The following is a simple verification of the det-extremality of $G$. For a graph $F$, let $A(F)$ denote its adjacency matrix. Suppose that $xy$ is a cut edge of  a connected graph $F$, and let $F_1$ and $F_2$ be the two components of $F-xy$ containing $x$ and $y$, respectively. Then
\begin{eqnarray*}
\det A(F)
&=&
\det A(F_1)\det A(F_2)-\det A(F_1-x)\det A(F_2-y),\\
\operatorname{per} A(F)
&=&
\operatorname{per} A(F_1)\operatorname{per} A(F_2)+\operatorname{per} A(F_1-x)\operatorname{per} A(F_2-y).
\end{eqnarray*}
Direct computation using a computer gives 
\begin{eqnarray*}
\det A(\mathbb{T}^*)&=&\operatorname{per}A(\mathbb{T}^*)=128,
\qquad
\det A(\mathbb{T}-e_T)=\operatorname{per}A(\mathbb{T}-e_T)=64. \\
\det A(\HH^*)&=&\operatorname{per}A(\HH^*)=256,
\qquad
\det A(\HH-e_H)=-256,
\qquad
\operatorname{per}A(\HH-e_H)=256.
\end{eqnarray*}
Since $\mathbb{T}^*-t=\mathbb{T}-e_T$, $\HH^*-h=\HH-e_H$, and the edge $th$ is a bridge of $G$, 
we obtain
\[
\det A(G)=128\cdot256-64(-256)=49152,
\qquad
\operatorname{per}A(G)=128\cdot256+64\cdot256=49152.
\]
Hence $G$ is det-extremal.  
We remark that the choice of $e_T$ matters. 
Only two edges of $\mathbb{T}$, forming a single orbit under the automorphism group $\mathrm{Aut}(\mathbb{T})$, yield the above values,
and  
some other choice of $e_T$ may result in a graph that is not
det-extremal.
In contrast, the choice of $e_H$ is irrelevant since the
Heawood graph is edge-transitive.

Let $G$ be a connected bipartite graph with parts $X$ and $Y$.
A \emph{polarity} of $G$ is an automorphism $\tau$ of $G$ with $\tau(X)=Y$ and $\tau^2=\mathrm{id}$, and a vertex $u$ of $G$ is \emph{absolute} for $\tau$ if $u$ and $\tau(u)$ are adjacent in $G$.
Note that a polarity has no fixed vertex by the definition.
 
When $G$ is the incidence graph (also called the \emph{Levi} graph) of a finite projective plane, a polarity of $G$ corresponds naturally to a polarity of the projective plane.
A classical theorem of Baer \cite{Baer} states that a polarity of a finite projective plane of order $q$ has at least $q+1$ absolute points. 
In terms of its Levi graph, this means that the corresponding polarity has at least $q+1$ absolute vertices in each partite set.
In particular, since the Heawood graph $\HH$ is the incidence graph of the Fano
plane ($q=2$), every polarity of $\HH$ has at least three absolute vertices in each
partite set.
 
\begin{observation}\label{lem:polarity}
Let $e=xy$ be an edge of the Heawood graph $\HH$, and let $\tau$ be a polarity of $\HH-e$
with $\tau(x)=y$.
Then $\tau$ has an absolute vertex.
\end{observation}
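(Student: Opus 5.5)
Here $\HH-e$ has $x,y$ of degree $2$. We assume $\tau(x)=y$, $\tau^2=\mathrm{id}$, and $\tau$ swaps the parts. The plan is to extend $\tau$ to a polarity of $\HH$ and then apply Baer's theorem through the Levi-graph interpretation recalled above.

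First we check that $\tau$, viewed as a permutation of $V(\HH)$, is an automorphism of $\HH$. Its edges are those of $\HH-e$ together with $e=xy$. Since $\tau(x)=y$ and $\tau$ is an involution, also $\tau(y)=x$, so $\tau(e)=\tau(x)\tau(y)=yx=e$. Hence $\tau$ maps $E(\HH)=E(\HH-e)\cup\{e\}$ onto itself. Because $\HH$ and $\HH-e$ have the same bipartition ($\HH-e$ is connected), $\tau$ still swaps the parts and $\tau^2=\mathrm{id}$. So $\tau$ is a polarity of $\HH$.

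By Baer's theorem, as recalled before the statement, $\tau$ has at least three absolute vertices in each part of $\HH$. Absoluteness in $\HH-e$ differs from absoluteness in $\HH$ only at vertices $u$ for which $u\tau(u)=e$, that is, $u\in\{x,y\}$. Indeed, $x$ and $y$ are absolute in $\HH$ via the edge $e$ but may fail to be absolute in $\HH-e$. In each part, then, at most one absolute vertex is lost on deleting $e$, namely $x$ in its part and $y$ in the other. So at least two absolute vertices remain in each part. Any such vertex $u$ satisfies $u\tau(u)\in E(\HH)\setminus\{e\}=E(\HH-e)$ and is therefore absolute for $\tau$ in $\HH-e$.

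The only point requiring care is the identification of polarities of the Levi graph with polarities of the Fano plane, so that Baer's theorem genuinely applies. The paper already records this, so no real obstacle remains. If one wanted to avoid Baer's theorem, a fallback would be a direct count. A polarity of the Fano plane corresponds to a symmetric incidence matrix after reindexing lines by $\tau$. The number of absolute points equals the trace of this symmetric $7\times7$ matrix $A$ with $A^2=J+2I$. The eigenvalues of $A$ are $3$ and $\pm\sqrt2$, so the trace is $3+(a-b)\sqrt2$ with $a+b=6$. Integrality forces $a=b$, giving trace $3$, which again leaves at least two absolute vertices after deleting $e$.
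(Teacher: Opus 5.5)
Your proof is correct and follows the paper's argument: since $\tau(y)=x$, $\tau$ fixes $e$ and is a polarity of $\HH$, so Baer's theorem gives three absolute vertices in each part, and deleting $e$ can destroy at most one of them (only $u\in\{x,y\}$ has $u\tau(u)=e$). Your eigenvalue fallback, which shows that the trace of the symmetric incidence matrix is exactly $3$, is a valid self-contained substitute for Baer's theorem when $q=2$, but the argument does not need it.
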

 
\begin{proof}
Since $\tau(x)=y$ and $\tau^2=\mathrm{id}$, we have $\tau(y)=x$.
Thus $\tau$ maps $e$ to itself, and hence $\tau$ is also a polarity of $\HH$.
Since $\tau$ has at least three absolute vertices in each partite set of $\HH$, let $u_1,u_2,u_3$ be distinct absolute vertices of $\tau$ in the same partite set.
Then the edges $u_1\tau(u_1)$, $u_2\tau(u_2)$, $u_3\tau(u_3)$ of $\HH$ are distinct, and at most one of them is $e$.
Hence at least two of them are edges of $\HH-e$.
In particular, $\tau$ has an absolute vertex in $\HH-e$.
\end{proof}

\begin{proof}[Proof of Theorem~\ref{thm:no22}]
By the first two paragraphs of this subsection  and the graph in Figure~\ref{fig:28vertex}, it is enough to show that  when $G$ is a connected det-extremal cubic non-bipartite graph, $|V(G)|=22$ cannot occur. Suppose to the contrary that $|V(G)|=22$ for a connected det-extremal cubic non-bipartite graph. 
Let $L=G\times K_2$.
Then $L$ is a connected det-extremal cubic bipartite graph of order $44$ by Lemma~\ref{lem:coverprop}(1) and (3).
Let $\sigma$ be the automorphism of $L$ sending $(v,i)$ to $(v,1-i)$ for each $i\in\{0,1\}$.
Then $\sigma$ is a polarity of $L$.
Moreover, $\sigma$ has no absolute vertex, since $G$ has no loop. 
 
As $44$ is not of the form $12k+2$ for a positive integer $k$, Lemma~\ref{lem:Heawood_Property}(2) says that $L$ is not a $k$-vertex-sum of the Heawood graph for any integer $k\ge 1$.
By Theorem~\ref{thm:mccuaig}, $L$ is not $3$-connected.
By Lemma~\ref{lem:parity}(2), the connectivity of $L$ is $2$.
We extract the following claim from the proof of \cite[Theorem~5.1]{FJLS} in the case of order $44$.

\begin{claim}\label{cite:proof:FJLS}
The decomposition cactus $T$ of $L$ consists of three $2$-cycle blocks sharing one common vertex.
The $3$-connected pieces at the three vertices of degree $2$ in $T$ are copies $G_1$, $G_2$, $G_3$ of  the Heawood graph $\HH$, and the $3$-connected piece at the common vertex in $T$ is a $3$-bond whose equivalence class consists of two non-adjacent vertices $a$ and $b$. 
\end{claim}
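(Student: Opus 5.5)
The plan is to reconstruct the order-$44$ case of \cite[Theorem~5.1]{FJLS} from the decomposition machinery of Subsection~\ref{sub:fjls}, using only the results quoted there.

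Let $T$ be the decomposition cactus of $L$ and let $G_1,\dots,G_t$ be the $3$-connected pieces, with $t\ge 2$ since $L$ has connectivity $2$.

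\textbf{Step 1: order count.} Contracting a class $V_i$ and replacing the $2$-edge cuts by marker edges preserves vertices: each piece $G_i$ has vertex set $V_i$. Hence
\[ \sum_i |V(G_i)| = 44. \]

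\textbf{Step 2: non-bond pieces are vertex-sums of the Heawood graph.} Let $v_i$ have degree at most $4$ in $T$. If $G_i$ is not a $3$-bond, then by Theorem~\ref{thm:fjls41} it is det-extremal. By Observation~\ref{obs:3con-decom} it is also $3$-connected cubic bipartite. So by Theorem~\ref{thm:mccuaig} and Lemma~\ref{lem:Heawood_Property}(2), $|V(G_i)|\in\{14,26,38,\dots\}$. Every $3$-bond contributes exactly $2$ vertices. By Lemma~\ref{lem:cactus}(1), a degree-$2$ vertex of $T$ carries a non-bond piece of order $12k+2$. For a general vertex $v_i$, the number of marker edges of $G_i$ is half its degree in $T$ (one per pair of cactus edges in a common cycle block through $v_i$). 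A $3$-bond has exactly three edges, none of them edges of $L$ except possibly one, since $L$ is simple. So a bond vertex has degree $4$ or $6$ in $T$, corresponding to $2$ or $3$ marker edges.

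\textbf{Step 3: enumerate the possibilities for $T$.} Every end-block of $T$ contains a degree-$2$ vertex by Lemma~\ref{lem:cactus}(2). Each degree-$2$ vertex carries at least $14$ vertices, so $T$ has at most three degree-$2$ vertices. Write
\[ 44 = \sum(\text{non-bond orders}) + 2\cdot\#\text{bonds}, \]
where each non-bond order is $\equiv 2 \pmod{12}$ (for pieces at vertices of degree at most $4$). The main obstacle is controlling vertices of degree at least $6$, where Theorem~\ref{thm:fjls41} does not apply, as well as pieces that are not Heawood sums. I would argue that the total degree of $T$ forces degree-$2$ vertices to dominate. A non-bond piece at a high-degree vertex is still a $3$-connected cubic bipartite graph, hence has at least $14$ vertices. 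On the other hand, each cut vertex of the cactus produces end-blocks, each with its own degree-$2$ piece of order at least $14$. So at most three end-blocks exist, and the cactus is small.

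The remaining configurations are checked by hand:
\begin{itemize}
\item a single cycle block;
\item two blocks sharing a vertex;
\item three blocks sharing a vertex.
\end{itemize}
In each case the order equation is solved, and the configurations not listed in the claim are excluded. This uses Lemma~\ref{thm:central4}: exhibit a central cycle of length $\equiv 0 \pmod 4$, formed by routing through marker-edge paths, which contradicts the det-extremality of $L$. For example, a single cycle block with pieces $14,14,14$ and a bond totals $44$, and must be killed by a parity argument on the lengths of the connecting paths.

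\textbf{Step 4: conclusion.} The surviving configuration is three $2$-cycle blocks at a common bond vertex, with three Heawood pieces: $3\cdot 14 + 2 = 44$. The bond's two vertices $a,b$ are non-adjacent in $L$: all three bond edges are marker edges, since the vertex has degree $6$ with three pairs. This gives Claim~\ref{cite:proof:FJLS}.
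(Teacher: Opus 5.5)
The paper does not prove this claim itself; it extracts it from the order-$44$ case of the proof of \cite[Theorem~5.1]{FJLS}. Rebuilding it from the order count, Theorem~\ref{thm:fjls41} and Lemma~\ref{thm:central4} could give a self-contained proof. However, your Step~3 has a genuine gap: the case list is not exhaustive, and the exclusions are never carried out.

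\textbf{Missing shapes.} Knowing that $T$ has at most three end-blocks bounds the number of leaves of its block--cutvertex tree, not its length. The count $44=\sum(12k_i+2)+2\cdot(\text{number of bonds})$ admits chains of digons that are neither ``two blocks sharing a vertex'' nor ``three blocks sharing a vertex''. Examples are $\HH$, bond, $\HH$ with two marker edges, $\HH$ ($14+2+14+14$); or $14$, bond, bond, $26$; or $14$, eight bonds, $14$. Each of these must be killed by an explicit cycle. So must the one listed shape that survives counting: a triangle block $\{v_0,u_1,u_2\}$ plus a digon $\{v_0,u_3\}$ with a degree-$4$ bond at $v_0$. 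You only gesture at such cycles.

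\textbf{The missing idea.} Suppose a bond $\{a,b\}$ with $ab\in E(L)$ shares a digon with a piece $G_j=H_j+x_jy_j$ at a degree-$2$ vertex, with $ax_j,by_j\in E(L)$. Take a central cycle $P+x_jy_j$ of the det-extremal graph $G_j$; one exists through any edge of a cubic bipartite graph. Then $|P|\equiv 1\pmod 4$, so $x_jPy_jbax_j$ has length $|P|+3\equiv 0\pmod 4$. This cycle is central in $L$, because the rest of $L$ has a perfect matching. Match each remaining bond by its rung, and in each remaining non-bond piece use a perfect matching avoiding its (at most two) marker edges, which a $1$-factorization provides. In every shape above, some degree-$4$ bond is adjacent to an end piece in this way, contradicting Lemma~\ref{thm:central4}.

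\textbf{High-degree vertices.} Your treatment of vertices of degree at least $6$ rests on a false statement. A non-bond piece that is not det-extremal need not have $14$ vertices: $K_{3,3}$ and the cube are $3$-connected cubic bipartite graphs. The correct reduction is structural. Let $d_2$ be the number of degree-$2$ vertices of $T$; then $2\le d_2\le 3$. Write their pieces' orders as $12k_i+2$ and set $K=\sum k_i$.
\begin{itemize}
\item If $d_2=2$, the block--cutvertex tree is a path. So every vertex of $T$ has degree $2$ or $4$, Theorem~\ref{thm:fjls41} applies everywhere, and every block is a digon.
\item If $d_2=3$, then $12K+6\le 44$ forces three copies of $\HH$ and leaves exactly two vertices. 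These form one bond, of degree $4$ (triangle plus digon) or $6$ (three digons, as claimed).
\end{itemize}

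\textbf{Your worked example.} It contradicts your own Step~2. In a single cycle block every vertex has degree $2$, so no bond can occur there. That case dies by counting alone, since $44=12K+2t$ with $K\ge t\ge 2$ has no solution.
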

 
For each $i\in\{1,2,3\}$, let $f_i$ be the marker edge of $G_i$ of Claim~\ref{cite:proof:FJLS}, which is unique by Lemma~\ref{lem:cactus}(1), and let $H_i=G_i-f_i$ as in Definition~\ref{def:decompo}.
By Definition~\ref{def:decompo}, each $2$-cycle block of the cactus gives a $2$-edge cut of $L$ whose two edges join the two ends of $f_i$ to $a$ and $b$, and since the two ends of $f_i$ lie in different parts of $L$ and so do $a$ and $b$, one of the two edges is incident with $a$ and the other with $b$.
Writing $f_i=x_iy_i$ so that $ax_i,by_i\in E(L)$, it follows that $L$ is obtained from the disjoint union of $H_1$, $H_2$ and $H_3$ by adding the two vertices $a$ and $b$ together with the edges $ax_i$ and $by_i$ for $i\in\{1,2,3\}$.

We will show that there exists $i$ such that $\sigma|_{V(H_i)}$ is a polarity of $H_i$ with
$\sigma(x_i)=y_i$.
Every automorphism of $L$ preserves the relation $R$ of Definition~\ref{def:decompo} and therefore permutes its equivalence classes.
Since $\{a,b\}$ is the unique equivalence class with fewer than $14$ vertices, $\sigma$ maps $\{a,b\}$ to itself.
As $\sigma$ has no fixed vertex, we have $\sigma(a)=b$.
It follows that $\sigma$ permutes $V(H_1),V(H_2),V(H_3)$.
Since $\sigma^2=\mathrm{id}$ and $\{\sigma(V(H_1)),\sigma(V(H_2)),\sigma(V(H_3))\}=\{V(H_1),V(H_2),V(H_3)\}$, there exists $i\in\{1,2,3\}$ such that $\sigma(V(H_i))=V(H_i)$. 
Since $x_i$ is the unique neighbor of $a$ in $H_i$ and $y_i$ is the unique neighbor of $b$ in $H_i$, the equality $\sigma(a)=b$ implies $\sigma(x_i)=y_i$.
Therefore $\sigma|_{V(H_i)}$ is a polarity of $H_i$ with $\sigma(x_i)=y_i$.

Since $H_i$ is isomorphic to $\HH-e$ for an edge $e$ of $\HH$, Observation~\ref{lem:polarity} implies that $\sigma|_{V(H_i)}$ has an absolute vertex.
This vertex is also absolute for $\sigma$, contradicting the fact that $\sigma$ has no absolute vertex.
\end{proof}

\section{A bridge to configurations}\label{sec:B:configuration} 
 
A \emph{(symmetric) $k$-configuration} is a $k$-uniform $k$-regular hypergraph in which two distinct vertices lie in at most one common hyperedge; for $k=3$ this is the notion of a $3$-configuration from Section~\ref{sec:intro}.
Its vertices are called \emph{points} and its hyperedges \emph{lines}, and it may be disconnected.
Its incidence graph is also called its \emph{Levi graph}.
The following is folklore.  
 
\begin{observation}\label{lem:config}
Let $G$ be a $k$-regular graph without a $4$-cycle. Then $\ONH(G)$ is a $k$-configuration.
\end{observation}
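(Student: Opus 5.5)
We verify the three defining properties of a $k$-configuration directly for $\ONH(G)$. Its vertex set is $V(G)$ and its hyperedges are the open neighborhoods $N_G(v)$, one for each $v\in V(G)$. Since $G$ is $k$-regular, each hyperedge $N_G(v)$ has exactly $k$ elements, so $\ONH(G)$ is $k$-uniform.

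Next, a vertex $u$ lies in the hyperedge $N_G(v)$ exactly when $v\in N_G(u)$. Hence $u$ lies in exactly $\deg_G(u)=k$ hyperedges, counted with multiplicity, and $\ONH(G)$ is $k$-regular.

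The only step needing the hypothesis is the intersection condition. Suppose two distinct points $u\ne w$ lie in two hyperedges $N_G(v)$ and $N_G(v')$ indexed by distinct vertices $v\ne v'$. Then $u,w$ are common neighbors of both $v$ and $v'$. The four vertices $u,v,w,v'$ are pairwise distinct: $u\ne w$ and $v\ne v'$ by assumption, and a vertex is never its own neighbor. So $uvwv'u$ is a $4$-cycle in $G$, which is a contradiction.

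This argument also covers repeated hyperedges. If $N_G(v)=N_G(v')$ with $v\ne v'$ and $k\ge2$, the same $4$-cycle appears, so distinct indices give distinct lines. (For $k=1$ the pair condition is vacuous.) Therefore any two distinct points lie in at most one common line, and $\ONH(G)$ is a $k$-configuration. There is no real obstacle here beyond making sure the $4$-cycle found has four distinct vertices.
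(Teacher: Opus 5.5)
Your proof is correct and follows the paper's argument: $k$-uniformity and $k$-regularity come directly from $k$-regularity of $G$, and two distinct points lying in two hyperedges $N_G(v),N_G(v')$ with $v\ne v'$ would give a $4$-cycle $uvwv'u$ in $G$. Your explicit check that the four vertices are distinct, and your remark that repeated hyperedges are also excluded, are correct additions to what the paper states more briefly.
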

 
\begin{proof}
The hypergraph $\ONH(G)$ has $|V(G)|$ vertices and as many hyperedges, and is $k$-uniform and $k$-regular.
If two hyperedges $N_G(u),N_G(v)$ with $u\ne v$ met in two points $a\ne b$, then $uavbu$ would be a $4$-cycle of $G$.
Hence two distinct vertices lie in at most one common hyperedge and so $\ONH(G)$ is a $k$-configuration.
\end{proof}
 
Recall from Section~\ref{sec:intro} that a \emph{blocking set} is a set of points meeting every line and containing no line.
Taking a blocking set as one color class and its complement as the other shows that a $k$-configuration has a blocking set if and only if it is $2$-colorable.
So blocking set free means the same as non-$2$-colorable, and we use the two languages interchangeably.
Our main results can therefore be read as statements about blocking set free $3$-configurations.  
 
Two distinct points are \emph{collinear} if they lie on a common line.
A \emph{triangle} is a set of three pairwise collinear points lying on three distinct lines.
By \cite[Proposition~1]{BGPZ}, an incidence structure is a $3$-configuration if and only if its Levi graph is cubic of girth at least $6$.
Thus, a $3$-configuration is triangle-free if and only if its Levi graph has girth at least $8$, since its Levi graph is bipartite.
There is a classical statement by Thomassen~\cite{Thom86} that a connected $3$-configuration is blocking set free if and only if its Levi graph is det-extremal.
Our theorem, Theorem~\ref{thm:thomassen}, is more general, as it speaks of all connected $3$-uniform $3$-regular hypergraphs, not only of those that are $3$-configurations.
As we mentioned in Subsection~\ref{sub:intro:results}, the existence of blocking sets in a $3$-configuration was investigated in the literature, see~\cite{DGropp,HGropp91,Gropp,EGS21}. 
 
We also note that, for a connected det-extremal cubic graph $G$, the hypergraph $\ONH(G)$ is a blocking set free $3$-configuration by Theorem~\ref{thm:detper}, Corollary~\ref{cor:4cycle}, and Observation~\ref{lem:config}. Thus every graph in Theorem~\ref{thm:3connected} or in \cite{AAKSZ} yields a blocking set free $3$-configuration.
By Theorem~\ref{thm:thomassen} and Lemma~\ref{lem:detgirth} we obtain the following.
 
\begin{corollary}\label{cor:triangle}
Every triangle-free $3$-configuration has a blocking set.
\end{corollary}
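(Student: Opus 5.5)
We argue by contraposition: we show that a $3$-configuration with no blocking set must contain a triangle. Let $\mathcal{C}$ be a blocking set free $3$-configuration. As noted above, blocking set free is the same as non-$2$-colorable. A hypergraph is $2$-colorable exactly when every connected component is, so some connected component $\mathcal{C}_1$ of $\mathcal{C}$ is non-$2$-colorable. This $\mathcal{C}_1$ is a connected $3$-uniform $3$-regular hypergraph, and it is still a $3$-configuration.

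Applying Theorem~\ref{thm:thomassen} to $\mathcal{C}_1$, its incidence graph $L$ is det-extremal. Since $\mathcal{C}_1$ is connected, $L$ is a connected cubic bipartite graph. Lemma~\ref{lem:detgirth} then gives that $L$ has girth exactly $6$, so $L$ contains a $6$-cycle.

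It remains to read a $6$-cycle in the Levi graph as a triangle. Write the cycle as $p_1\ell_1p_2\ell_2p_3\ell_3p_1$, with points $p_j$ and lines $\ell_j$ alternating. The points are pairwise distinct, the lines are pairwise distinct, and consecutive points share the line between them. Hence $p_1,p_2,p_3$ are pairwise collinear and lie on the three distinct lines $\ell_1,\ell_2,\ell_3$, so they form a triangle in $\mathcal{C}_1\subseteq\mathcal{C}$. Equivalently, as the paper notes via \cite[Proposition~1]{BGPZ}, a triangle-free configuration has a Levi graph of girth at least $8$, which contradicts the girth being exactly $6$.

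There is no real obstacle. The only points needing care are the reduction to a connected component, since Theorem~\ref{thm:thomassen} requires connectedness, and checking that the $6$-cycle really yields three distinct lines rather than concurrent ones. That check is immediate because the cycle's vertices are distinct.
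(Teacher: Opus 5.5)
Your proposal is correct and follows the paper's proof essentially step for step. Like the paper, you pass to a non-$2$-colorable connected component, apply Theorem~\ref{thm:thomassen} to get a det-extremal Levi graph, use Lemma~\ref{lem:detgirth} to get girth $6$, and read the resulting $6$-cycle as a triangle.
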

 
\begin{proof}
Let $\CC$ be a blocking set free $3$-configuration.
It is enough to show that $\CC$ contains a triangle.
Then at least one connected component $\CC'$ of $\CC$ is a blocking set free $3$-configuration.
Let $L'$ be the Levi graph of $\CC'$. 
Then
\begin{align*}
 \CC' \text{ is blocking set free}
 & \Leftrightarrow \text{$\CC'$ is non-$2$-colorable}
 & \text{by the definitions} \\
 & \Leftrightarrow \text{$L'$ is det-extremal}
 & \text{by Theorem~\ref{thm:thomassen}} \\
 & \Rightarrow \text{$L'$ has girth $6$}
 & \text{by Lemma~\ref{lem:detgirth}} \\
 & \Leftrightarrow \text{$\CC'$ has a triangle}
 & \text{by the definitions}.
\end{align*}
Thus, $\CC$ also has a triangle.
\end{proof}
 
In \cite{BGPZ}, the authors summarize the long history of triangle-free $3$-configurations, starting from the Cremona--Richmond configuration, which is the smallest triangle-free $3$-configuration, denoted by $15_3$.
Since then, much further work on its structures has been carried out, 
see \cite{BBP,BGPZ,HGropp91, Gropp,Boben},  and it is known that complete enumeration of triangle-free $3$-configurations remains difficult.
For small orders, computations are also reported in several papers \cite{BBP,BGPZ,AlB10,AlB25}; in particular, it has been verified computationally that no triangle-free $3$-configuration of order at most $24$ is blocking set free.
Corollary~\ref{cor:triangle} proves that all triangle-free $3$-configurations of every order have a blocking set.

\section*{Acknowledgements}

Boram Park was supported by the National Research Foundation of Korea (NRF) grant funded by the Korea government (MSIT) (No.~RS-2025-00523206) and by the New Faculty Startup Fund from Seoul National University.

\bigskip 
\noindent\textbf{Declaration of generative AI use.}
\par\smallskip
\noindent  During the preparation of this manuscript, the authors occasionally used Claude (Anthropic) to suggest improvements to the wording of selected passages. The authors reviewed and edited all such suggestions and take full responsibility for the final manuscript.

\end{document}